\numberwithin{equation}{section}
\crefname{equation}{}{}
\newtheorem{theorem}{Theorem}[section]
\newtheorem{prop}[theorem]{Proposition}
\newtheorem{cor}[theorem]{Corollary}
\newtheorem{lemma}[theorem]{Lemma}
\theoremstyle{definition}
\newtheorem{dfn}[theorem]{Definition}
\newtheorem{rmk}[theorem]{Remark}
\newtheorem{ex}[theorem]{Example}
\DeclareMathOperator{\Hom}{\mathsf{Hom}}
\DeclareMathOperator{\Ext}{\mathsf{Ext}}
\DeclareMathOperator{\Tor}{\mathsf{Tor}}
\newcommand{\Acal}{\mathcal{A}}
\newcommand{\Bcal}{\mathcal{B}}
\newcommand{\Ccal}{\mathcal{C}}
\newcommand{\Ecal}{\mathcal{E}}
\newcommand{\Fcal}{\mathcal{F}}
\newcommand{\Gcal}{\mathcal{G}}
\newcommand{\Ical}{\mathcal{I}}
\newcommand{\Pcal}{\mathcal{P}}
\newcommand{\Rcal}{\mathcal{R}}
\newcommand{\Scal}{\mathcal{S}}
\newcommand{\Vcal}{\mathcal{V}}
\newcommand{\Wcal}{\mathcal{W}}
\newcommand{\Xcal}{\mathcal{X}}
\newcommand{\Ycal}{\mathcal{Y}}
\newcommand{\Qbb}{\mathbb{Q}}
\newcommand{\Zbb}{\mathbb{Z}}
\newcommand{\Mod}[1]{\mathsf{Mod}\mbox{-}#1}
\newcommand{\lMod}[1]{#1\mbox{-}\mathsf{Mod}}
\newcommand{\lrMod}[2]{#1\mbox{-}\mathsf{Mod}\mbox{-}#2}
\newcommand{\Flat}{\mathrm{Flat\mbox{-}}}
\renewcommand{\mod}[1]{\mathsf{mod}\mbox{-}#1}
\newcommand{\depth}{\mathsf{depth}}
\newcommand{\grade}{\mathsf{grade}}
\newcommand{\Kdim}{\mathsf{dim}}
\newcommand{\Spec}[1]{\mathsf{Spec}(#1)}
\newcommand{\mSpec}[1]{\mathsf{mSpec}(#1)}
\newcommand{\Ass}[1]{\mathsf{Ass}(#1)}
\newcommand*{\Perp}[1]{{}^{\perp_{#1}}}
\newcommand{\Prod}[1]{\mathsf{Prod}(#1)}
\newcommand{\Sub}[1]{\mathsf{Sub}(#1)}
\newcommand{\Ker}{\mathsf{Ker}}
\newcommand{\Coker}{\mathsf{Coker}}
\newcommand{\pp}{\mathfrak{p}}
\newcommand{\qq}{\mathfrak{q}}
\newcommand{\mm}{\mathfrak{m}}
\newcommand{\height}{\mathsf{height}}
\newcommand{{\tst}}{\textit{t}-}
\newcommand{\pd}{\mathsf{pd}}
\newcommand{\fd}{\mathsf{fd}}
\newcommand{\id}{\mathsf{id}}
\newcommand{\Rfd}{\mathsf{Rfd}}
\newcommand{\rfd}{\mathsf{rfd}}
\newcommand{\Findim}{\mathsf{Findim}}
\newcommand{\findim}{\mathsf{findim}}
\newcommand{\WFindim}{\mathsf{W.Findim}}
\newcommand{\newterm}[1]{\textit{#1}}
\newcommand{\coleq}{\vcentcolon=}
\title[Cotorsion pairs and Tor-pairs]
{Cotorsion pairs and Tor-pairs over commutative noetherian rings}
\author{Dolors Herbera}
\address[D. Herbera]{Departament de Matem\`atiques,
Universitat Aut\`onoma de Barcelona,  08193 Bellaterra
(Barcelona), Spain\newline
Centre de Recerca Matemàtica,  08193 Bellaterra
(Barcelona), Spain}
\email{dolors.herbera@uab.cat}
\thanks{The first author acknowledges the support of the Spanish State Research Agency, through the Severo Ochoa and María de Maeztu Program for Centers and Units of Excellence in R\& D (CEX2020-001084-M). She was also partially supported by the projects MIMECO  PID2020-113047GB-I00 financed by the Spanish Government  and  \emph{Laboratori d'Interaccions entre Geometria, \`Algebra i Topologia} (LIGAT) with reference number 2021 SGR 01015 financed by the Generalitat de Catalunya. The second author was supported by the GAČR project 23-05148S and the Academy of Sciences of the Czech Republic (RVO 67985840).}
\author{Michal Hrbek}
\address[M. Hrbek]{Institute of Mathematics of the Czech Academy of Sciences, \v{Z}itn\'{a} 25, 115 67 Prague, Czech Republic}
\email{hrbek@math.cas.cz}
\author{Giovanna Le Gros}
\address[G. Le Gros]{{\it current address:} Charles University, Faculty of Mathematics and Physics, Department
of Algebra, Sokolovská 83, 186 75 Praha, Czech Republic. \newline 
Departament de Matem\`atiques,
Universitat Aut\`onoma de Barcelona,  08193 Bellaterra
(Barcelona), Spain}
\email{giovanna.legros@matfyz.cuni.cz}
\subjclass[2020]{Primary: 13D07, 13C15, 13E05  Secondary: 13C11, 16E30}
\thanks{}
\begin{document}
\begin{abstract}
For a commutative noetherian ring $R$, we classify all the hereditary cotorsion pairs cogenerated by pure-injective modules of finite injective dimension. The classification is done in terms of integer-valued functions on the spectrum of the ring. Each such function gives rise to a system of local depth conditions which describes the left-hand class in the corresponding cotorsion pair. 
Furthermore, we show that these cotorsion pairs correspond by explicit duality to hereditary $\Tor$-pairs generated by modules of finite flat dimension. 
\end{abstract}
\maketitle
\section{Introduction}

There have been many successful classification theorems of pure-injective modules over specific rings. However, pure-injective modules over even a well-understood ring, such as a noetherian ring of global dimension greater than one, can have pathological behaviour \cite{Jen70}. Given that describing them explicitly can be a daunting task, another approach is instead to associate a class of modules to a given a collection of pure-injective modules. In this paper, we consider the left orthogonal with respect to the $\Ext$ functor, in particular over commutative noetherian rings.

These classes have several appealing properties. Specifically, they are known to be closed under direct limits, as well as having good approximation properties, specifically, they are covering classes. In other terminology, these classes are the left constituents of perfect complete cotorsion pairs, a non-trivial consequence of being cogenerated by pure-injective modules, a result which was used to prove the long-standing question of whether flat covers necessarily exist \cite{BEE01}.    

Important examples of classes that are $\Ext$-orthogonal to a collection of pure-injective modules are the classes appearing in a $\Tor$-pair. $\Tor$-pairs were introduced as an analogue of cotorsion pairs, that is pairs of classes which are orthogonal with respect to the $\Tor$ functor instead of the $\Ext$ functor. Via a Tor-Ext duality induced by the hom-tensor adjunction, all classes which appear in a $\Tor$-pair arise as a left-hand class of a complete cotorsion pair cogenerated by the character dual of a module (recall, that these are always pure-injective). 

Going back to general cotorsion pairs cogenerated by pure-injective modules, we recall that if there is a bound on the injective dimensions of the pure-injective modules and additionally one imposes the condition that the left-hand class of the cotorsion pairs is definable, that is, in particular closed under arbitrary products, then these cotorsion pairs are exactly the cotilting cotorsion pairs, see \cref{def-tor-pairs}. 
Such cotorsion pairs over commutative noetherian rings \cite{AHPST14}, and even over certain general commutative rings, \cite{H16, HS20, Baz07, BH21} have been classified in terms of various ring-theoretic data. In particular, the cotilting cotorsion pairs of cofinite type, which are the ones that appear as a $\Tor$-orthogonal class to a family of strongly finitely presented modules of infinite projective dimension, are classified using the spectrum of the ring, or more specifically, Thomason subsets. Over commutative noetherian rings all cotilting classes are of cofinite type \cite{AHPST14}. It is also interesting to note that in the characterisations in \cite{AHPST14,H16,HS20}, the Koszul complex with respect to a finitely generated ideal plays a pivotal role.

In this paper, we are principally interested in commutative noetherian rings. In Theorem~\ref{T:Torpair-characterisation}, we show that the cotorsion pairs cogenerated by pure-injective modules of finite injective dimension are classified by functions from the spectrum to the non-negative integers, which are bounded by the local depth of the prime. The left-hand constituent of the associated cotorsion pair consists precisely of those modules whose local depths are bounded by the corresponding function. As we detail in Corollary~\ref{T:definable}, our results nicely extend the classification of cotilting cotorsion pairs in the commutative noetherian case, which corresponds to those functions which are in addition order-preserving. 

Next, we pay attention to $\Tor$-pairs.  It follows that, over any given ring, there is a set of $\Tor$-pairs, and the size of the set has a bound that depends on  the cardinality of the ring, we recall this bound in \cref{bound-tor-pairs}. By duality, the $\Tor$-orthogonal classes to a class of modules of finite flat dimension, are also classes that are $\Ext$-orthogonal to a class of pure-injective modules of finite injective dimension. As an outcome of our work, we show that in the commutative noetherian setting, the converse is true as well, so that the $\Ext$-orthogonal classes we consider come from a $\Tor$-pair. So our classification Theorem~\ref{T:Torpair-characterisation} is also a classification of $\Tor$-pairs generated by modules of finite flat dimension. The choice of generators can be made explicit --- the function gives rise to a collection of suitable cocycle modules in localized Koszul complexes associated to each prime ideal, see \cref{function-to-torpair}.

Characterisation of $\Tor$-pairs over a commutative noetherian ring $R$ was posed as an open problem in the monograph of G\"{o}bel and Trlifaj \cite[\S 16.5, 3.]{GT12}. This was motivated by the aforementioned classification of cotilting classes, as well as the fact that if $R$ is hereditary then constituents of $\Tor$-pairs coincide with cotilting classes \cite[Theorem 16.31]{GT12}. In \cref{hered-tor-pair-reg-ring}, we show that if $R$ is regular then our main classification result covers all hereditary $\Tor$-pairs and all hereditary cotorsion pairs cogenerated by pure-injectives. On the other hand, in the singular case there are hereditary $\Tor$-pairs which do not fit into our classification, see \cref{ex1}, \cref{ex2}, and \cref{opposite-tor}. Leaving the noetherian setting for a short while, we will also supplement this in \cref{vd-example} by producing a commutative semihereditary ring (in fact, a valuation domain), over which there are $\Tor$-pairs which contain a class which is not a cotilting class. 

Additionally, while cotilting classes originate in representation theory, the present work also extends ideas that were studied in commutative algebra and module theory. For example, the large and small restricted flat dimension introduced by Christensen, Frankild and Foxby \cite{CFF02} can be interpreted in terms of suitable $\Tor$-pairs, and their results on when these two dimension theories coincide can be interpreted as a special case of our classification, see \cref{s:rfd}.  

We begin this manuscript with some background the aptly named \cref{s:preliminaries}, which begins over an arbitrary ring before specialising to commutative rings and then commutative noetherian rings, introducing the machinery we will need at each stage. Next we introduce some preliminary results in \cref{s:prelimaryresults} on $\Tor$-pairs, cotorsion pairs and pure-injective modules over commutative noetherian rings. The bulk of the main results of this paper are in \cref{s:hered-tor-pair}, which all take place over a commutative noetherian ring, except for examples. 
The main classification is stated in \cref{T:Torpair-characterisation}. Along the way we provide various examples of $\Tor$-pairs which do not fit various hypotheses. We specialise to regular rings in \cref{hered-tor-pair-reg-ring} and show that over them our main result gives a classification of all hereditary $\Tor$-pairs. In the final \cref{s:rfd}, we show that a special case of our classification can be used to characterise almost Cohen--Macaulay rings in terms of small and large restricted flat dimensions being equal, providing a global version of \cite[Theorem 3.2]{CFF02}. This last argument also employs the recent study of Govorov-Lazard theorem in higher dimensions of \cite{HLG24}.

\section{Background}\label{s:preliminaries}
Let $R$ be an associative unital ring. We denote by $\Mod R$ the category of all right $R$-modules and by $\mod R$ be the (full, isomorphism-closed) subcategory consisting of right $R$-modules which admit a resolution by finitely generated projective $R$-modules, which we refer to as the \newterm{strongly finitely presented modules}. For a module $M \in \Mod R$, we denote the \newterm{character module} of $M$ by $M^+ \coleq \Hom_\Zbb(M, \Qbb/\Zbb)$.

Fix a class $\Ccal$. We let $\Sub \Ccal$ denote the class of modules which are isomorphic to a submodule of $\Ccal$.  

\subsection{}\label{ss:dimensions} For $n \geq 0$, let $\Pcal_n(R) = \{M \in \Mod R \mid \pd_R M \leq n\}$, $\Ical_n(R) = \{M \in \Mod R \mid \id_R M \leq n\}$, and $\Fcal_n(R) = \{M \in \Mod R \mid \fd_R M \leq n\}$ denote the subcategories of $\Mod R$ consisting of all modules of projective, injective, and respectively flat dimension bounded above by $n$. We put $\Pcal_n^{<\aleph_0}(R) = \Pcal_n(R) \cap \mod R$. We use the notation $\Pcal(R) = \bigcup_{n \geq 0}\Pcal_n(R)$ for modules of finite projective dimension, similarly we put $\Fcal(R) = \bigcup_{n \geq 0}\Fcal_n(R)$ and $\Pcal^{<\aleph_0}(R) = \bigcup_{n \geq 0}\Pcal_n^{<\aleph_0}(R)$. 
We often omit the reference to the ring when the ring is clear from the context and write simply $\Pcal_n$, $\Ical_n$, $\Fcal_n$, $\Pcal$, $\Fcal$, or $\Pcal_n^{<\aleph_0}$ and $\Pcal^{<\aleph_0}$. 

For an $R$-module $M$, $\Omega_i(M)$ is defined to be an $i$th syzygy if $i>0$, or, the $i$th minimal cosyzygy if $i<0$. By convention, $\Omega_0 (M) =M$, and $\Omega_i(M)$ is some fixed chosen representative up to a projective direct summand for $i>0$. We shall also use the following less usual notion. Let the following be the minimal flat presentation of $M$.
\[
\begin{tikzcd}
\cdots \arrow[r] &F_n \arrow[r,"d_n"] &  F_{n-1}  \arrow[r] & \cdots  \arrow[r] &F_0 \arrow[r,"d_0"] &M \arrow[r] &0,
\end{tikzcd}
\]
Then the \newterm{$i$th yoke}, denoted by $\Upsilon_i(M)$, of $M$ is defined to be $\Ker (d_{i-1})$ for $i \geq 1$, and the $0$th yoke is $M$ itself. 

For a class $\Xcal$ in $\Mod R$, we let 
$$\Xcal^{\top_{1}}= \{M \in \lMod R \mid \Tor^R_1(X,M) = 0, ~\forall X \in \Xcal\}$$ and 
$$\Xcal^{\top_{\geq i}}= \{M \in \lMod R \mid \Tor^R_j(X,M) = 0 ~\forall X \in \Xcal, ~\forall j\geq i\},$$ and for brevity, $$\Xcal^{\top_{}}\coleq \Xcal^{\top_{\geq 1}}.$$ 
Analogously, for a class $\Ycal$ in $\lMod R$ we can define the classes ${}^{\top_1}\Ycal$, ${}^{\top_{\geq i}}\Ycal$  and ${}^\top\Ycal$ in $\Mod R$. 

For a class $\Xcal$ in $\Mod R$, we let 
$$\Xcal^{\perp_{1}}= \{M \in \Mod R \mid \Ext_R^1(X,M) = 0, ~\forall X \in \Xcal\}$$, and 
$$\Xcal^{\perp_{\geq i}}= \{M \in \Mod R \mid \Ext_R^j(X,M) = 0 ~\forall X \in \Xcal, ~\forall j\geq i\},$$ and for brevity, $$\Xcal^{\perp_{}}\coleq \Xcal^{\perp_{\geq 1}}.$$ 
Analogously, for a class $\Ycal$ in $\Mod R$ we can define the classes ${}^{\perp_1}\Ycal$, ${}^{\perp_{\geq i}}\Ycal$  and ${}^\perp\Ycal$ in $\Mod R$. 

\subsection{Homological formulae}\label{ss:hom-formulae}
It is well known that  the $\mathrm{Hom}$-$\otimes$ adjunction yields the following natural isomorphism 
$$\Ext _R^i(A, \mathrm{Hom}_S(B,C))\cong  \mathrm{Hom}_S(\Tor _i^R(A,B),C)$$
for any $i \ge 0$, where $M\in \Mod R$, $B \in \lrMod R S$ and $C \in \Mod S$ is injective. If $C_S$ is, in addition, an injective cogenerator this yields that $\Ext _R^n(A, \mathrm{Hom}_S(B,C))=0$ if and only if $\Tor _n^R(A,B)=0$.

An $R$-module $M$ is \newterm{cotorsion} if $\Ext^1_R(F,M)$ vanishes for every $F \in \Fcal_0(R)$. 

Let $R$ and $S$ be rings, consider the modules $A \in \Mod R$, $B \in \lrMod R S$ which is flat both as an $R$-module and as an $S$-module, and $C \in \Mod S$ a cotorsion $S$-module. Then, by the derived $\mathrm{Hom}$-$\otimes$ adjunction, there are the following natural isomorphisms as abelian groups for every $i\geq 0$.

\begin{equation}\label{eq:derived-tensor-hom}
\Ext^i_S(A\otimes_R B, C) \cong \Ext^i_R(A, \Hom_S( B, C))
\end{equation}

\subsection{Pure-injective modules}
A module $N$ is \newterm{pure-injective} if $\Hom_R(B,N) \to \Hom_R(A, N)$ is an epimorphism for any pure embedding $A \to B$. In other words, the pure-injective modules are the modules which are injective with respect to pure monomorphisms. In particular, all pure-injective modules are cotorsion, and in fact a module is flat if and only if $\Ext_R^1(F, N)$ vanishes for every pure-injective module $N$. We let $\Pcal\Ical$ denote the class of pure-injective modules.

We will now prove another useful homological formula.
For an $R$-module $N$ and any directed system $\{M_i\}_{i \in I}$, there is the following natural isomorphism of abelian groups. 
\[
\Hom_R(\varinjlim_I M_i, N) \cong \varprojlim_I \Hom_R(M_i, N) 
\]
By a well-known result of Auslander, if $N$ is additionally pure-injective, then the same result also holds for higher $\Ext$-groups, see \cite[Lemma 6.28]{GT12}.
\[
\Ext^j_R(\varinjlim_I M_i, N) \cong \varprojlim_I \Ext^j_R(M_i, N) 
\]

\begin{lemma}\label{ext-flat-pi}
    Let $R$ be a ring, $F \in \Mod R$ a flat $R$-module, $Y\in \lrMod R S$ and $N \in \Mod S$ a pure-injective module. Then for every $j\geq0$, there is the following natural isomorphism of abelian groups.
    \[
        \Ext^j_S(F\otimes_RY, N) \cong \Hom_R(F, \Ext^j_R(Y, N))
    \]
\end{lemma}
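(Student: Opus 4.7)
My plan is to reduce to the case of a finitely generated free module $F = R^n$, where the claimed isomorphism is immediate (as $R^n \otimes_R Y \cong Y^n$ and $\Ext^j_S(Y^n, N) \cong \Ext^j_S(Y, N)^n \cong \Hom_R(R^n, \Ext^j_S(Y, N))$), and then pass to arbitrary flat $F$ via directed colimits, using the Lazard--Govorov theorem together with the Auslander-type formula for $\Ext^j$ against a pure-injective recalled immediately before the lemma.

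First I would invoke Lazard--Govorov to write $F = \varinjlim_{i \in I} R^{n_i}$ as a directed colimit of finitely generated free right $R$-modules, with transition maps $f_{ij}\colon R^{n_i} \to R^{n_j}$. Since $-\otimes_R Y$ commutes with colimits, we obtain a directed system $\{Y^{n_i}\}_{i \in I}$ in $\Mod S$ whose colimit is $F \otimes_R Y$.

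Next, applying $\Ext^j_S(-, N)$ and using that $N$ is pure-injective in $\Mod S$, the Auslander formula stated just above the lemma yields
\[
\Ext^j_S(F \otimes_R Y, N) \cong \varprojlim_{i \in I} \Ext^j_S(Y^{n_i}, N) \cong \varprojlim_{i \in I} \Ext^j_S(Y, N)^{n_i}.
\]
The right $R$-module structure on $\Ext^j_S(Y, N)$ induced by the left $R$-action on $Y$ (matching the conventions underlying the earlier hom-tensor formula \cref{eq:derived-tensor-hom}) gives natural isomorphisms $\Ext^j_S(Y, N)^{n_i} \cong \Hom_R(R^{n_i}, \Ext^j_S(Y, N))$. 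The hom-colimit adjunction then yields
\[
\varprojlim_{i \in I} \Hom_R(R^{n_i}, \Ext^j_S(Y, N)) \cong \Hom_R\!\left(\varinjlim_{i \in I} R^{n_i},\, \Ext^j_S(Y, N)\right) \cong \Hom_R(F, \Ext^j_S(Y, N)),
\]
which is the desired formula (reading the subscript $R$ on the $\Ext$ in the statement as $S$, consistent with \cref{eq:derived-tensor-hom}).

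The main technical obstacle is tracking naturality throughout: one must check that the inverse-system structure on $\{\Ext^j_S(Y, N)^{n_i}\}$ transferred from Auslander's formula matches, under the free-module identification, the structure coming from the Hom-system $\{\Hom_R(R^{n_i}, \Ext^j_S(Y, N))\}$ with transition maps $\Hom_R(f_{ij}, \Ext^j_S(Y, N))$. This reduces to naturality of Auslander's formula in its first variable, which is standard, so the argument is essentially bookkeeping once the colimit presentation is fixed.
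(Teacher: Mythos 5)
Your proof is correct and follows essentially the same path as the paper's: write $F$ as a directed colimit of finitely generated free modules via Lazard--Govorov, pass $-\otimes_R Y$ through the colimit, apply Auslander's formula for $\Ext$ against the pure-injective $N$ to turn the colimit into a limit, identify $\Ext^j_S(F_i \otimes_R Y, N)$ with $\Hom_R(F_i, \Ext^j_S(Y,N))$ for free $F_i$, and finish with the hom-colimit adjunction. You also correctly flag that the $\Ext^j_R$ in the displayed statement should read $\Ext^j_S$, matching the usage in \cref{eq:derived-tensor-hom}.
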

\begin{proof}
    Let $F = \varinjlim_{i \in I} F_i $ where each $F_i$ is a finitely generated free $R$-module. Then there are the following natural isomorphisms.
    \begin{align*}
\Ext^j_S(F \otimes_RY, N) 	&\cong  \Ext^j_S(\varinjlim_{i \in I} F_i \otimes_R Y, N) \\
						&\cong  \Ext^j_S(\varinjlim_{i \in I} (F_i)\otimes_RY, N)  \\
						&\cong  \varprojlim_{i \in I}\Ext^j_S( F_i\otimes_RY, N) \\
                            &\cong  \varprojlim_{i \in I}\Hom_R(F_i,\Ext^j_S( Y , N)) \\
                            &\cong  \Hom_R(F,\Ext^j_S( Y , N)) 
\end{align*}
\end{proof}

 \subsection{Cotorsion pairs.}\label{ss:cotorsion-pairs}

A \newterm{cotorsion pair} is a pair of classes $(\Acal, \Bcal)$ in $\Mod R$, such that $\Acal = {}^{\perp_1} \Bcal$ and $\Bcal = \Acal^{\perp_1}$. A cotorsion pair is \newterm{hereditary} if moreover $\Acal = {}^{\perp} \Bcal$ and $\Bcal = \Acal^{\perp}$, that is, all the higher $\Ext$-groups vanish as well. 

Let $\Xcal$ be a class in $\Mod R$. The cotorsion pair \newterm{generated} by $\Xcal$ is the cotorsion pair $({}^{\perp_1}(\Xcal^{\perp_1}),\Xcal^{\perp_1})$. The cotorsion pair \newterm{cogenerated} by $\Xcal$ is the cotorsion pair $({}^{\perp_1}\Xcal,({}^{\perp_1}\Xcal)^{\perp_1})$. 

The left-hand class in a cotorsion pair is closed under direct sums and extensions, and contains all the projective modules. The right-hand class in a cotorsion pair is closed under products and extensions, and contains all the injective modules. A class $\Xcal$ in $\Mod R$ is \newterm{resolving} if $\Xcal$ is closed under extensions, contains the projectives, and is closed under kernels of epimorphisms. Dually, a class $\Xcal$ in $\Mod R$ is \newterm{coresolving} if $\Xcal$ is closed under extensions, contains the injectives, and is closed under cokernels of monomorphisms. 
A cotorsion pair $(\Acal, \Bcal)$ is hereditary if and only if $\Acal$ is a resolving class if and only if $\Bcal$ is a coresolving class. 

A cotorsion pair $(\Acal, \Bcal)$ is \newterm{complete} if $\Acal$ is special precovering, or equivalently, if $\Bcal$ is special preenveloping, see \cite[\S 5 and \S 6]{GT12} for details.

\subsection{$\Tor$-pairs.}\label{ss:Tor-pairs}
A \newterm{$\Tor$-pair} is a pair of classes $(\Ecal, \Ccal)^\top$ with $\Ecal$ in $\Mod R$ and $\Ccal$ in $\lMod R$, such that $\Ecal = {}^{\top_1} \Ccal$ and $\Ccal = \Ecal^{\top_1}$. A $\Tor$-pair is \newterm{hereditary} if moreover $\Ecal = {}^{\top} \Ccal$ and $\Ccal = \Ecal^{\top}$, that is, all the higher $\Tor$-groups vanish as well. In particular, $(\Fcal_n, \Fcal_n^\top)^\top$ forms a hereditary $\Tor$-pair for every $n \geq 0$. 

Let $\Xcal$ be a class in $\Mod R$. The $\Tor$-pair \newterm{generated} by $\Xcal$ is the $\Tor$-pair of the form $({}^{\top_1}(\Xcal^{\top_1}),\Xcal^{\top_1})^\top$. If $\Xcal$ is closed under yokes, then the $\Tor$-pair it generates is hereditary. Both classes in $\Tor$-pairs are closed under direct limits, pure submodules, pure epimorphisms and arbitrary direct sums. Over commutative rings, it is clear that if $(\Ecal, \Ccal)^\top$ is a $\Tor$-pair, then also $(\Ccal, \Ecal)^\top$ is a (distinct) $\Tor$-pair.

\subsection{The lattice of $\Tor$-pairs and cotorsion pairs}\label{ss:lattice-of-pairs}

The collection of cotorsion pairs in $\Mod R$ partially ordered by inclusion in their first component forms a complete lattice, denoted $L_{\Ext_1}$, with the cotorsion pairs $(\Pcal_0, \Mod R)$ and $(\Mod R, \Ical_0)$ forming the smallest and largest element respectively. In general, the lattice $L_{\Ext_1}$ is class-sized \cite[Example 5.16]{GT12}.

Similarly, the collection of $\Tor$-pairs partially ordered by inclusion in their first component form a complete lattice, denoted $L_{\Tor_1}$. The $\Tor$-pairs $(\Fcal_0, \lMod R)^\top$ and $(\Mod R, \Fcal_0)^\top$ form the smallest and largest element respectively. The lattice $L_{\Tor_1}$ can be canonically embedded into $L_{\Ext_1}$ via the following assignment.

\begin{lemma}\cite[Lemma 2.16(b), Lemma 5.17]{GT12}\label{tor-induce-cotor}
Let $(\Ecal, \Ccal)^\top$ be a $\Tor$-pair. Then $(\Ecal, \Ecal^{\perp_1})$ is a cotorsion pair which is cogenerated by $\Ccal^+ \coleq \{C^+ \mid C \in \Ccal\}$. Moreover, if $(\Ecal, \Ccal)^\top$ is hereditary, so is $(\Ecal, \Ecal^{\perp_1})$.
\end{lemma}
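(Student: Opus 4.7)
The plan is to leverage the Tor-Ext duality coming from the hom-tensor adjunction, which is essentially already recorded in \cref{ss:hom-formulae}.

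First, since $\Qbb/\Zbb$ is an injective cogenerator of the category of abelian groups, the natural isomorphism
\[
\Ext^i_R(M, C^+) \cong \Hom_{\Zbb}(\Tor_i^R(M, C), \Qbb/\Zbb) = \Tor_i^R(M, C)^+
\]
holds for every $M \in \Mod R$, $C \in \lMod R$, and $i \geq 0$. Because a character module $X^+$ vanishes if and only if $X = 0$, we obtain the biconditional $\Ext^i_R(M, C^+) = 0 \iff \Tor_i^R(M, C) = 0$.

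For the first assertion, I would apply this with $i = 1$: it shows that $M \in {}^{\perp_1} \Ccal^+$ if and only if $\Tor_1^R(M, C) = 0$ for every $C \in \Ccal$, that is, if and only if $M \in {}^{\top_1}\Ccal = \Ecal$. Hence ${}^{\perp_1}(\Ccal^+) = \Ecal$, which by definition means that $(\Ecal, \Ecal^{\perp_1})$ is the cotorsion pair cogenerated by $\Ccal^+$.

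For the hereditary part, assume $(\Ecal, \Ccal)^\top$ is hereditary, so that $\Tor_i^R(E, C) = 0$ for all $E \in \Ecal$, $C \in \Ccal$ and $i \geq 1$. The same duality then gives $\Ext^i_R(E, C^+) = 0$ for all such $E, C$ and all $i \geq 1$, i.e.\ $\Ecal \subseteq {}^{\perp}(\Ccal^+)$. Invoking the characterisation recalled in \cref{ss:cotorsion-pairs}, it suffices to show $\Ecal$ is resolving: it is closed under extensions and contains the projectives as the left-hand side of a cotorsion pair, so the only thing to verify is closure under kernels of epimorphisms. Given a short exact sequence $0 \to A \to B \to E \to 0$ with $B, E \in \Ecal$, the long exact sequence
\[
\Ext^1_R(B, C^+) \to \Ext^1_R(A, C^+) \to \Ext^2_R(E, C^+)
\]
has vanishing outer terms for every $C \in \Ccal$, by the two inclusions $\Ecal \subseteq {}^{\perp_1}(\Ccal^+)$ and $\Ecal \subseteq {}^{\perp}(\Ccal^+)$ just established. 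Thus $\Ext^1_R(A, C^+) = 0$ for all $C \in \Ccal$, so $A \in {}^{\perp_1}(\Ccal^+) = \Ecal$, which concludes the proof.

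The whole argument is essentially routine once the character-module duality is in hand; there is no real obstacle, although some care is needed not to conflate cogeneration (using $\perp_1$ only) with the hereditary upgrade (needing $\perp_i$ for all $i$), which is exactly why the inclusion $\Ecal \subseteq {}^{\perp}(\Ccal^+)$ is a separate step beyond the first assertion.
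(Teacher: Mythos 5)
Your proof is correct and follows exactly the intended route: the paper records the $\Hom$--$\otimes$ adjunction in \cref{ss:hom-formulae} precisely so that the equivalence $\Ext^i_R(M,C^+)=0 \iff \Tor_i^R(M,C)=0$ is available, and the lemma itself is cited to \cite{GT12} rather than proved, so your argument is in the same spirit as the reference. One small remark: for the hereditary upgrade you could bypass the $\Ext$-side long exact sequence entirely and check directly that $\Ecal$ is closed under kernels of epimorphisms using the $\Tor$ long exact sequence together with $\Ecal = {}^{\top}\Ccal$; both routes are valid and of comparable length, but the $\Tor$-side version avoids the extra step of first establishing $\Ecal \subseteq {}^{\perp}(\Ccal^+)$.
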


In contrast to the case of cotorsion pairs, the lattice of $\Tor$-pairs forms a set.
\begin{cor}\cite[Corollary 6.24]{GT12} \label{bound-tor-pairs}
Let $R$ be a ring and $\kappa = |R| + \aleph_0$. Then $|L_{\Tor_1}| \leq 2^{2^\kappa}$.
\end{cor}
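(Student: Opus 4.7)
The plan is to bound the right-hand constituent $\Ccal$ of any $\Tor$-pair $(\Ecal, \Ccal)^\top$ using its restriction to modules of bounded cardinality, and then count such restrictions. Since a $\Tor$-pair is determined by either of its constituents, an upper bound on the number of possible right-hand classes suffices.

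First I would exploit the closure properties recorded in \cref{ss:Tor-pairs}: the class $\Ccal$ is closed under direct limits and under pure submodules. Combined with the standard Löwenheim--Skolem-type argument for modules (every element, indeed every subset of size $\leq \kappa$, is contained in a pure submodule of cardinality $\leq \kappa$), this implies that every module $M \in \lMod R$ is the directed union of its pure submodules of cardinality $\leq \kappa$. Writing $\lMod^{\leq \kappa} R$ for the class of left $R$-modules of cardinality $\leq \kappa$, closure under pure submodules forces every pure submodule $N \leq M$ with $|N| \leq \kappa$ to lie in $\Ccal \cap \lMod^{\leq \kappa} R$ whenever $M \in \Ccal$, and conversely closure under direct limits recovers $M$ from such a directed system. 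Hence $\Ccal$ is determined by the intersection $\Ccal \cap \lMod^{\leq \kappa} R$.

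Now I would do the counting. A left $R$-module of cardinality $\leq \kappa$ is, up to isomorphism, a choice of underlying set of size $\leq \kappa$ and an $R$-action, so there are at most $\kappa^{\kappa \cdot \kappa} = 2^{\kappa}$ isomorphism classes of objects in $\lMod^{\leq \kappa} R$. The class $\Ccal$ is determined by a subclass of this set, and there are at most $2^{2^{\kappa}}$ such subclasses. Since the assignment $(\Ecal, \Ccal)^\top \mapsto \Ccal$ is injective on the lattice $L_{\Tor_1}$, this yields the claimed bound $|L_{\Tor_1}| \leq 2^{2^\kappa}$.

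The only potentially delicate step is the Löwenheim--Skolem assertion that every module is the directed union of its pure submodules of size $\leq \kappa$; this is classical (it follows for example from the fact that pure embeddings are precisely the pp-elementary embeddings, together with the downward Löwenheim--Skolem theorem applied to the language of $R$-modules with $|R| \leq \kappa$ parameters), and I would simply cite it rather than reproving it here.
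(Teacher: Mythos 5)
Your proof is correct. The paper itself does not give an argument for this statement but only cites \cite[Corollary~6.24]{GT12}, so there is no in-paper proof to compare against; your argument is the standard one and is in the spirit of the reference. The two closure properties you use (direct limits, pure submodules) are exactly those recorded for both constituents of a $\Tor$-pair in \cref{ss:Tor-pairs}, and the L\"owenheim--Skolem step (every module is the directed union of its pure submodules of cardinality $\leq\kappa$) is the key point; you correctly flag this as the only delicate part and it is indeed classical. The counting, the injectivity of $(\Ecal,\Ccal)^\top\mapsto\Ccal$, and the conclusion all check out. One could alternatively apply the same filtration argument to the left-hand class $\Ecal$ and deduce that every $\Tor$-pair is generated by a set of modules of cardinality $\leq\kappa$; by the symmetry of the closure conditions this is the same argument, and both variants give the bound $2^{2^\kappa}$.
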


The question remains of whether one can classify all the $\Tor$-pairs over a particular ring; in case of commutative noetherian rings, this was formulated in \cite[\S 16.5, 3.]{GT12}. The $\Tor$-pairs over a Dedekind domain have been completely characterised in terms of subsets of the spectrum. Moreover, the $\Tor$-pairs over a right hereditary ring have been classified in terms of resolving subcategories of $\mod R$, see \cite[Theorem 16.29 and 16.31]{GT12}. We will show that it is not possible to classify all the $\Tor$-pairs in such a way over an arbitrary commutative noetherian ring, that is, either in terms of the spectrum or in terms of resolving subcategories. Instead, we restrict to a specific type of $\Tor$-pair. 

\subsection{Hereditary cotorsion pairs cogenerated by pure-injectives} Let $(\Ccal,\Wcal)$ be a hereditary cotorsion pair cogenerated by a class $\Pcal$ of pure-injective $R$-modules, that is, $\Ccal = \Perp{}\Pcal$. Then $(\Ccal,\Wcal)$ is complete and perfect, the latter meaning that $\Ccal$ is closed under direct limits, see \cite[\S 6]{GT12}. As stated in \cref{tor-induce-cotor}, a hereditary $\Tor$-pair $(\Ecal,\Ccal)^\top$ gives rise to a hereditary cotorsion pair $(\Ccal,\Wcal)$ cogenerated by pure-injectives via the adjunction formula $\Ccal = \Perp{>0}(\Ecal^+)$. This in fact gives a lattice isomorphism between the lattice $L_{\Tor_1}$ of  hereditary $\Tor$-pairs and the sublattice of $L_{\Ext^1}$ consisting of those hereditary cotorsion pairs which are cogenerated by character dual modules, which in turn embeds into the sublattice of $L_{\Ext^1}$ consisting of those hereditary cotorsion pairs which are cogenerated by pure-injective modules.

\begin{rmk}\label{rmk:pi-cot-pair-tor-class}
    We do not know whether each hereditary cotorsion pair cogenerated by pure-injective modules is induced by a hereditary $\Tor$-pair in the above sense, or equivalently, is cogenerated by character duals. In \cref{s:hered-tor-pair}, we show that this holds for a commutative noetherian ring when restricted to pure-injective modules of finite injective dimension. In the last section, we show that this covers all cotorsion pairs cogenerated by pure-injectives if the ring is regular. 
    
    Outside of commutative noetherian rings, one could ask the same question for valuation domains. In this case, each pure-injective module is of injective dimension at most one. Bazzoni proved in \cite[Theorem 7.11]{Baz15} that the left-hand class of a cotorsion pair cogenerated by a cotilting module is a class in a $\Tor$-pair, see \cref{ss:cotilting} for results on cotilting modules. However, we do not know if this is necessarily the case for an arbitrary pure-injective, even of finite injective dimension or a cotilting module over more general families of rings.

    The question is similar in shape to asking whether each cohomological Bousfield class in the derived module category is a homological Bousfield class. For details, we refer to Stevenson's paper \cite{Ste14}, in which he proves in particular that there is a commutative von Neumann regular ring over which a cohomological Bousfield class exists which is not a homological Bousfield class, see \cite[Corollary 4.12]{Ste14} in particular.
\end{rmk}

\subsection{Generation of $\Tor$-pairs}\label{ss:generating-Tor-pairs}
We call a class $\Ccal$ of $R$-modules \newterm{definable} if it is closed under products, direct limits, and pure submodules. It is straightforward to see that the $\Tor$-orthogonal of a set of finitely presented modules is a definable class. The converse does not hold, however the following theorem describes exactly when this happens. An analogous result for cotorsion pairs is known to hold, see \cite[Theorem 6.1]{S18}.

\begin{theorem}\cite[Theorem 3.11]{H14}\label{tc-for-tor}
 Let $\Xcal$ be a class of right $R$-modules and let $\Ccal = \Xcal^{\top_1}$.Then $\Ccal$ is definable if and only if there exists a class of countably presented modules $\Scal$ such that
\begin{enumerate}
    \item $\Xcal \subseteq \varinjlim \Scal $, 
    \item $\Scal^{\top_1}= \Ccal$, and 
    \item the first syzygy of any module in $\Scal$ is $\Ccal$-Mittag Leffler.
\end{enumerate}
In particular, if $(\Ecal, \Ccal)^\top$ is a (not-necessarily hereditary) $\Tor$-pair such that $\Ccal$ is definable, then there exists a set of countably presented modules $\Scal$ which generates the $\Tor$-pair and $ \Ecal$ coincides with the direct limit of its countably presented constituents.
\end{theorem}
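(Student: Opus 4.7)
The plan is to split the biconditional and then deduce the ``in particular'' clause.

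For the easier direction ($\Leftarrow$), I would check the three closure properties defining definability for $\Ccal=\Scal^{\top_1}$. Closure under direct limits follows from the fact that $\Tor$ commutes with direct limits in each argument. Closure under pure submodules is standard: a pure-exact sequence $0\to C'\to C\to C''\to 0$ with $C\in\Ccal$ gives a Tor long exact sequence in which the rightmost tensor piece $\Omega S\otimes C'\hookrightarrow \Omega S\otimes C$ is still injective by purity, forcing $\Tor_1^R(S,C')=0$ for every $S\in\Scal$. The substantive step is closure under products; this is exactly where condition (3) enters. Given $S\in\Scal$ with a short exact sequence $0\to \Omega S\to P\to S\to 0$ with $P$ free and $\Omega S$ countably presented and $\Ccal$-Mittag-Leffler, and a family $(C_i)_{i\in I}\subseteq \Ccal$, I would compare the tensored sequence for $\prod_i C_i$ with the product over $i$ of the tensored sequences, via the commutative square whose verticals are the canonical maps $\Omega S\otimes\prod C_i\to \prod(\Omega S\otimes C_i)$ and $P\otimes\prod C_i\to \prod(P\otimes C_i)$. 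The left vertical is injective by the Mittag-Leffler hypothesis on $\Omega S$, and the kernel of the lower horizontal map equals $\prod_i\Tor_1^R(S,C_i)=0$. Chasing the square then shows that $\Omega S\otimes\prod C_i\to P\otimes\prod C_i$ is injective, i.e.\ $\Tor_1^R(S,\prod_i C_i)=0$, so $\prod_i C_i\in\Ccal$.

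For the harder direction ($\Rightarrow$), the plan is to produce $\Scal$ by an approximation/filtration argument using the definability of $\Ccal$. For each $X\in\Xcal$, I would exploit the closure of $\Ccal$ under products, direct limits, and pure submodules to present $X$ as a direct limit of countably presented modules $S$ whose $\Tor$-orthogonals still contain $\Ccal$. To upgrade these countably presented approximations so that (2) and (3) hold, I would invoke the characterization of $\Ccal$-Mittag-Leffler modules in terms of the injectivity of the natural map $M\otimes\prod C_i\to\prod(M\otimes C_i)$: since $\Ccal$ is closed under products, a standard Raynaud--Gruson style argument (in the relative form used in \cite[\S 3]{GT12}) lets one replace an arbitrary countably presented approximation by one whose first syzygy is $\Ccal$-Mittag-Leffler while preserving the equality $\Scal^{\top_1}=\Ccal$. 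This is the main obstacle, since it requires careful bookkeeping of countable presentations along syzygies and the relative Mittag-Leffler condition; I would follow the blueprint of the proof of the corresponding cotorsion-pair statement \cite[Theorem 6.1]{S18} with $\Tor$ in place of $\Ext$.

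Finally, for the ``in particular'' clause, apply the theorem with $\Xcal=\Ecal$. By (2), $\Scal^{\top_1}=\Ecal^{\top_1}=\Ccal$, so $\Scal$ generates the $\Tor$-pair. By (1), $\Ecal\subseteq\varinjlim\Scal$. For the reverse, note that $\Scal\subseteq{}^{\top_1}(\Scal^{\top_1})={}^{\top_1}\Ccal=\Ecal$, and $\Ecal$ is closed under direct limits as the left side of a $\Tor$-pair, so $\varinjlim\Scal\subseteq\Ecal$. Hence $\Ecal=\varinjlim\Scal$, identifying $\Ecal$ with the direct limit closure of its countably presented members.
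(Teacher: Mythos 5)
The main biconditional here is imported verbatim from \cite[Theorem 3.11]{H14}; the paper gives no proof of it, and the only content original to the paper is the ``in particular'' clause, which is stated as an immediate corollary. So there is no proof in the paper to compare against for the biconditional itself.

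Your deduction of the ``in particular'' clause is correct and is the intended one: apply the theorem with $\Xcal=\Ecal$; (2) gives $\Scal^{\top_1}=\Ccal$, hence ${}^{\top_1}(\Scal^{\top_1})={}^{\top_1}\Ccal=\Ecal$ and $\Scal$ generates the pair; moreover $\Scal\subseteq{}^{\top_1}(\Scal^{\top_1})=\Ecal$ consists of countably presented modules, so combining (1) with closure of $\Ecal$ under direct limits yields $\Ecal=\varinjlim\Scal\subseteq\varinjlim\Ecal^{\mathrm{c.p.}}\subseteq\Ecal$. One small point you leave implicit: the theorem permits $\Scal$ to be a proper class while the clause asserts a set, but this is harmless since countably presented $R$-modules form a set up to isomorphism.

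On the biconditional, your $(\Leftarrow)$ argument is sound: closure of $\Scal^{\top_1}$ under direct limits and pure submodules is formal (for pure submodules the clean statement is that $\Omega S\otimes C'\to P\otimes C'$ factors as the injection $\Omega S\otimes C'\hookrightarrow\Omega S\otimes C$ followed by the injection $\Omega S\otimes C\hookrightarrow P\otimes C$), and product-closure is exactly where condition (3) enters, via the commutative square comparing $\Omega S\otimes\prod C_i$ with $\prod(\Omega S\otimes C_i)$. Your $(\Rightarrow)$ is only a plan --- you defer the substantive filtration/Mittag--Leffler upgrade to a Raynaud--Gruson/\v{S}aroch-style argument without carrying it out --- but since the paper likewise delegates this to \cite{H14}, that level of detail is appropriate here.
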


\subsection{Cotilting classes.}\label{ss:cotilting}

\begin{dfn}
A left $R$-module $C$ is an \newterm{$n$-cotilting module} if the following conditions hold. 
\begin{itemize}
	\item[(C1)] $\id (C) \leq n$
	\item[(C2)] $\Ext^i_R(C^I, C)=0$ for all $i>0$ and any set $I$. 
	\item[(C3)] There exists a $\Prod C$-resolution of the injective cogenerator $W$ of $\Mod R$. That is, there is an exact sequence of the following form where $C^i \in \Prod C$ for each $0 \leq i \leq n$.
		\[
		\begin{tikzcd}[cramped, sep=small]
		0 \arrow[r] &C^n \arrow[r] & \cdots \arrow[r] & C^1 \arrow[r] &C^0 \arrow[r] &W \arrow[r] &0.
		\end{tikzcd}
		\]
\end{itemize}
The class ${}^\perp C$ is called the \newterm{cotilting class} associated to $C$. In particular, a cotilting module is always a pure-injective module of finite injective dimension \cite{Baz03, Sto06}.	
\end{dfn}
Equivalently, a class of modules $\Ccal$ is a cotilting class if and only if it is resolving, covering, closed under direct products and direct summands, and $\Ccal^\perp \subseteq \Ical_n(R)$. In particular, cotilting classes are definable classes. See \cite[\S 15]{GT12} for details.

A  class $\Xcal$ of left $R$-modules is of \newterm{cofinite type} if there exists a set $\Scal \subseteq \mod R \cap \Pcal_n$ of right $R$-modules such that $\Scal^\top = \Xcal$. All classes of cofinite type are cotilting classes, but not all cotilting classes are of cofinite type, even over commutative rings, see for example \cite[Example 15.33]{GT12}. However, over commutative noetherian rings, all cotilting classes are of cofinite type \cite[Theorem 4.2]{AHPST14}.

By the following proposition, the hereditary $\Tor$-pairs generated by classes of modules of flat dimension at most $n$ where the right-hand class is a definable class are exactly the $n$-cotilting $\Tor$-pairs for $n \geq 0$. 

\begin{prop}\cite[Proposition 3.14]{AHPST14},\cite[Theorem 2.3]{AHT04}\label{def-tor-pairs}
Let $R$ be any ring and $(\Ecal, \Ccal)^\top$ be a hereditary $\Tor$-pair with $\Ecal \subseteq \Fcal_n$ for some $n \geq 0$. Then $\Ccal$ is definable if and only if $\Ccal$ is an $n$-cotilting class.

Moreover, $\Ccal$ is an $n$-cotilting class of cofinite type if and only if $\Ecal = \varinjlim \Ecal^{<\aleph_0} = {}^\top\big((\Ecal^{<\aleph_0})^\top\big)$. 

\end{prop}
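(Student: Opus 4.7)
If $C$ is an $n$-cotilting module, then $C$ is pure-injective by Bazzoni's theorem \cite{Baz03}, so ${}^\perp C$ is closed under direct limits (by Auslander's formula $\Ext^i_R(\varinjlim M_j, C) \cong \varprojlim \Ext^i_R(M_j, C)$ valid for pure-injective $C$) and under pure submodules (since $\Hom_R(-, C)$ sends pure embeddings to split surjections). Closure under products is a standard consequence of the cotilting axioms, so $\Ccal$ is definable.

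For the substantive forward direction, suppose $\Ccal$ is definable. The plan is to construct an $n$-cotilting module $N$ with ${}^\perp N = \Ccal$. I would choose $N$ to be a pure-injective elementary cogenerator of $\Ccal$, for instance a product indexed by a representative set of indecomposable pure-injectives in $\Ccal$, so that $N$ is pure-injective and $\Ccal = \Def(N)$; in particular $N^I \in \Ccal$ for every set $I$. Axiom (C2), $\Ext^i_R(N^I, N) = 0$, follows from the Tor-vanishing $\Tor^R_i(E, N^I) = 0$ for $E \in \Ecal$ (a consequence of $N^I \in \Ccal$) via the Ext/Tor adjunction $\Ext^i_R(-, Y^+) \cong \Tor_i^R(Y, -)^+$, once one arranges $N$ to be cogenerated by character duals of modules in $\Ecal$. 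Axiom (C3) is obtained by iterating special $\Prod N$-preenvelopes of an injective cogenerator of $\lMod R$ --- available by completeness of the cotorsion pair cogenerated by the pure-injective $N$ --- and truncating at step $n$ by virtue of (C1).

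Axiom (C1), $\id N \leq n$, is the crux and the step I expect to be the main obstacle. For any $M \in \lMod R$, one reduces to showing $\Ext^1_R(\Omega_n M, N) = 0$, which via the same Ext/Tor adjunction converts to the vanishing of $\Tor_1^R(Y, \Omega_n M)$ for a suitable right $R$-module $Y$ cogenerating $N$. The hypothesis $\Ecal \subseteq \Fcal_n$, together with the identity $\id(E^+) = \fd(E)$, then delivers the required injective-dimension bound. Definability is essential here, as it is precisely what allows the entire Tor-orthogonality of $\Ecal$ to be encoded by a single pure-injective cogenerator $N$.

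Finally, for the cofinite-type clause: one direction is immediate, since if $\Ecal = \varinjlim \Ecal^{<\aleph_0}$ then $\Ccal = \Ecal^\top = (\Ecal^{<\aleph_0})^\top$ (using that $\Tor$ commutes with direct limits in both variables, and that $\Ecal^{<\aleph_0} \subseteq \Pcal_n^{<\aleph_0}$ because strongly finitely presented modules of finite flat dimension have finite projective dimension), displaying $\Ccal$ as being of cofinite type. The converse, that cofinite type forces $\Ecal = \varinjlim \Ecal^{<\aleph_0} = {}^\top((\Ecal^{<\aleph_0})^\top)$, follows by standard arguments on $\Tor$-orthogonal classes generated by sets of strongly finitely presented modules, together with the already established Eklof-type closure of $\Ecal$ under direct limits.
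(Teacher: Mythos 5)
Your easy direction (cotilting implies definable) and the cofinite-type clause essentially track the cited results and are sound. The genuine gap is in the substantive direction, in the construction of a cotilting module $N$ with ${}^\perp N = \Ccal$: you describe two incompatible candidates and verify neither. An elementary cogenerator $N$ of $\Ccal$ gives $\Def(N)=\Ccal$, but this does not imply ${}^\perp N = \Ccal$ --- in general ${}^\perp N$ is strictly larger than the definable closure of $N$. Switching to $N=\prod_j E_j^+$ for a generating set $\{E_j\}\subseteq\Ecal$ does give ${}^\perp N=\Ccal$ and $\id N\leq n$ (since $\id E_j^+=\fd E_j$ and products preserve injective-dimension bounds), but then axiom (C2) requires $N^I\in\Ccal={}^\perp N$, hence $N\in\Ccal$, hence $\Tor^R_i(E,E_j^+)=0$ for all $E\in\Ecal$, $i>0$. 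This last vanishing is \emph{not} a formal consequence of the $\Tor$-pair: the Hom-tensor adjunction identifies $\Ext^i_R(-,E_j^+)$ with $\Tor^R_i(E_j,-)^+$, which has the wrong variance to evaluate $\Tor^R_i(E,E_j^+)$. So as written you have not produced a single coherent $N$ satisfying (C1)--(C3).

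The paper sidesteps all of this by citing \cite[Proposition 3.14]{AHPST14} and \cite[Theorem 2.3]{AHT04}, adding only the remark that the $n$-th yokes of modules in $\Ecal$ are flat. If you want a self-contained argument, the cleaner route is to avoid constructing $N$ and instead verify the characterization of $n$-cotilting classes recalled in \cref{ss:cotilting}: $\Ccal$ is $n$-cotilting iff it is resolving, covering, closed under products and direct summands, and $\Ccal^\perp\subseteq\Ical_n$. Resolving and summand-closure come from the hereditary $\Tor$-pair; covering holds because $(\Ccal,\Ccal^\perp)$ is cogenerated by the pure-injective class $\Ecal^+$, hence complete and perfect; product-closure is precisely the definability hypothesis; and the crux $\Ccal^\perp\subseteq\Ical_n$ follows by dimension shifting, with no definability needed: for $W\in\Ccal^\perp$ and any $M\in\lMod R$, $\Ext^{n+1}_R(M,W)\cong\Ext^1_R(\Omega_n M,W)=0$ because $\Omega_n M\in\Ccal$, which holds since $\Tor^R_i(E,\Omega_n M)\cong\Tor^R_{i+n}(E,M)=0$ for all $i>0$ and $E\in\Ecal$ using $\fd E\leq n$.
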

\begin{proof}

For the first statement, we comment that the $n$-th syzygy of a module in $\Ecal \subseteq \Fcal_n$ is flat, so will belong to $\Ccal$. 

The second equivalence holds as an $n$-cotilting class is of cofinite type if and only if there exists $\Scal \subseteq  \Pcal_n^{<\aleph_0}$ such that $\Scal^\top = \Ccal$, so in particular $\Ecal ={}^\top\big((\Ecal^{<\aleph_0}))^\top\big)$. That ${}^\top\big((\Ecal^{<\aleph_0})^\top\big) = \varinjlim (\Ecal^{<\aleph_0})$ holds by \cite[Theorem 2.3]{AHT04}.
\end{proof}

\begin{rmk}
     Bazzoni showed that over a valuation domain, there exist cotilting classes which are  not of cofinite type, \cite[Proposition 4.5]{Baz07}. Thus, there exist definable classes in hereditary $\Tor$-pairs generated by modules of bounded flat dimension which are not of cofinite type, even over commutative (semihereditary) rings.
     
     Additionally, as mentioned above, by \cite[Theorem 6.11]{Baz15} every cotilting class over a valuation domains is a class in a $\Tor$-pair. As valuation domains are of weak global dimension at most one, every $\Tor$-pair is hereditary and generated by modules of flat dimension at most one. 

    However, does it follow from \cref{tc-for-tor} that every cotilting class over any ring is of cocountable type? This holds for valuation domains and commutative noetherian rings. What is missing is to show that the left-hand class in a cotorsion pair cogenerated by a cotilting module is a class in a $\Tor$-pair, see \cref{rmk:pi-cot-pair-tor-class}. 
\end{rmk}

Let $R$ be a commutative noetherian ring. In view of Proposition~\ref{def-tor-pairs}, a \newterm{cotilting $\Tor$-pair} is a hereditary $\Tor$-pair $(\Ecal,\Ccal)^\top$ generated by a class of modules of finite flat dimension and where the right-hand class a definable class. If additionally a cotilting $\Tor$-pair is generated by a class of modules of flat dimension at most $n$, then it is an \newterm{$n$-cotilting $\Tor$-pair}. Given a cotilting $\Tor$-pair $(\Ecal,\Ccal)$, \cref{def-tor-pairs} yields that $\Ccal_\pp$ is a cotilting class for all $\pp \in \Spec R$.

By \cite[Theorem 16.31]{GT12}, over a right hereditary ring all $\Tor$-pairs are classified in terms of resolving subcategories of $\mod R$. This implies that every class in a $\Tor$-pair over a commutative hereditary ring is a cotilting class of cofinite type. We show that this is not necessarily the case over semihereditary commutative rings, in particular over a commutative valuation domain. Thus, in this setting, there exist classes in $\Tor$-pairs which arise as left-$\Ext$-orthogonals of pure-injective modules of finite injective dimension that are not cotilting classes. 

\begin{ex}\label{vd-example}
Let $R$ be a (rank one) maximal valuation domain with idempotent maximal ideal $\mm$, fraction field $Q$ and valuation $v \colon Q \to \Gamma$. As there is an idempotent prime (maximal) ideal, there is a subgroup of $v(R)$ order-isomorphic to the rationals $\Qbb$. Recall that over a valuation domain, every module is of flat dimension at most one, and all pure-injective modules are of injective dimension at most one. In particular, $R$ being maximal implies that all ideals of $R$ (including the regular module $R$) are pure-injective $R$-modules, see \cite[Theorem XIII.5.2 and Theorem XV.3.2]{FS01}.

Moreover, by \cite[Proposition 4.5]{Baz08}, in this setting the module $C \coleq Q \oplus R \oplus R/\mm$ is a $1$-cotilting module not of cofinite type, and the cotilting class ${}^\perp C$ is exactly the class of modules which are extensions of flat modules by direct sums of copies of the residue field $R/\mm$, that is 
\[
\Ccal \coleq {}^\perp C = \{X \in \Mod R \mid 0 \to (R/\mm)^{(\alpha)} \to X \to F \to 0, F \in \Flat R\}.
\] In particular, we note that ${}^\perp R/\mm \subsetneq {}^\perp R= {}^\perp C$ since $\mm$ is of injective dimension one and $Q$ is injective, and, for example, $R/s\mm \in {}^\perp R/\mm \setminus {}^\perp R$ where $s$ is a non-zero element of $R$.     

Since $R/\mm$ is simple, ${}^\perp R/\mm = R/\mm^\top$, so we conclude that $(\Ccal, {}^\perp R/\mm)^\top$ is a hereditary $\Tor$-pair by the structure of the cotilting class $\Ccal$ described above, and that all module have flat dimension at most one. Moreover, we claim that $R/\mm^\top$ is not closed under products, so is not a cotilting class. That is, consider the modules $\{R/s\mm\}_{s \in R \setminus \{0\}}$. Then the $R$-module $\Tor_1^R(R/s\mm, R/\mm)\cong s\mm \otimes_R(R/\mm) \cong (s\mm)/(s\mm^2) $ necessarily vanishes as $\mm$ is idempotent, where the first isomorphism is the connecting homomorphism of $(- \otimes_RR/\mm)$ applied to $0 \to s\mm \to R \to R/s\mm \to 0$. 
On the other hand, $\Tor_1^R(R/sR, R/\mm) \cong R/\mm$ for every non-zero element $s$ of $R$. 

Now fix a non-zero element $s \in \mm$. Then we claim that $sR = \bigcap_{v(t)<v(s)}t\mm= \bigcap_{v(t)<v(s)}tR$. The inclusion ``$\subseteq$'' is clear. For ``$\supseteq$'', we show the contrapositive. Take $r \notin sR$. Then $sR \subsetneq rR$, or $v(s)>v(r)$. As the value group $\Gamma = \Gamma(R)$ contains the rationals by assumption, there exists an element $t$ of $R$ such that $v(s)>v(t)>v(r)$, so in particular $r \notin tR$ so $r \notin \bigcap_{v(t)>v(s)}tR$.

From the previous paragraph, we can conclude that there is a monomorphism $0 \to R/sR \to \prod_{v(t)<v(s)}R/t\mm$. Thus in particular since $\Tor_1^R(R/sR, R/\mm) \neq 0$, $\Tor_1^R(\prod_{v(t)<v(s)}R/t\mm, R/\mm) \neq 0$, so ${}^\perp R/\mm ={}^\top R/\mm$ is not closed under products.
\end{ex}

We will return to the discussion of cotilting classes over commutative rings and commutative noetherian rings in \cref{ss:cotiltingcomm}.

\subsection{Commutative rings: grade, depth and Koszul complexes.}\label{ss:Koszul}

We  define the grade with respect to an ideal for a general commutative ring $R$. We  follow the convention of using  \emph{grade} for general commutative rings and ideals, and reserving \emph{depth} for local rings and their maximal ideal. The following definition makes this precise.

\begin{dfn} \label{def:depth}
Let $R$ be a commutative ring. For a fixed ideal $J$ of $R$, we let the \newterm{grade} of $M$ relative to $J$, denoted $\grade_R (J;M)$, be the non-negative integer $\inf \{i \mid \Ext^i_R(R/J,M) \neq 0\}$. If $\Ext^i_R(R/J,M) \neq 0$ for all $i \geq 0$, then we set $\grade_R (J;M) = \infty$. 

If $R$ is local with maximal ideal $\mm$, then the \newterm{depth} of a module $M$ is the grade of $M$ with respect to the maximal ideal. That is,  $\depth_R (M) \coleq \grade_R(\mm;M)$. \end{dfn}

The Koszul complex is grade-sensitive. We introduce the concepts and results we need to explain that following   \cite{BH98}, \cite[\S 1]{BourAC} and \cite{HS20}.

\medskip

Let $x \in R$. Then we let $K_\bullet(x)$ denote the complex
\[
\begin{tikzcd}
0 \arrow[r] &R \arrow[r, "(-)\cdot x"] & R  \arrow[r] &0, 
\end{tikzcd}
\]
with the leftmost $R$ in degree $1$ and the rightmost $R$ in degree $0$. Let $\mathbf{x}\coleq x_1, \dots, x_\ell$, $\ell >0$ a sequence of elements in $R$. Then we let $K_\bullet(\mathbf{x})$ denote the tensor product of the complexes $K_\bullet(x_i)$, that is
\[
K_\bullet(\mathbf{x}) \coleq \underset{1 \leq i \leq \ell}\bigotimes K_\bullet(x_i).
\]
All entries in this complex are finitely generated free modules and are nonzero only in homological degrees $0, 1, \dots, \ell$. 

Let $M$ be an $R$-module. The \newterm{Koszul (chain) complex} of $ \mathbf{x}$ with coefficients in $M$ is the chain complex $K_\bullet(\mathbf{x}) \otimes_RM$, denoted by $K_\bullet(\mathbf{x};M)$. The $i$th \newterm{Koszul homology} of $ \mathbf{x}$ with coefficients in $M$ is the $i$th homology of the chain complex $K_\bullet(\mathbf{x};M)$, and is denoted by $H_i(\mathbf{x};M)$. We denote by $K_\bullet(\mathbf{x}) $ the chain complex $K_\bullet(\mathbf{x}) $, and by $H_i(\mathbf{x})$ the homology of $K_\bullet(\mathbf{x}) $.

The \newterm{Koszul cochain complex} of $ \mathbf{x}$ with coefficients in $M$ is the chain cocomplex $\Hom_R(K_\bullet(\mathbf{x}), M)$, denoted by $K^\bullet(\mathbf{x};M)$, and is nonzero only in cohomological degrees $0, 1, \dots, \ell$. The $i$th \newterm{Koszul cohomology} of $ \mathbf{x}$ with coefficients in $M$ is the $i$th cohomology of the cochain complex $K^\bullet(\mathbf{x};M)$, and is denoted by $H^i(\mathbf{x};M)$.
Similarly as in the dual, we denote by $K^\bullet(\mathbf{x}) $ the cochain complex $K^\bullet(\mathbf{x})$, and by $H^i(\mathbf{x})$ the homology of $K^\bullet(\mathbf{x}) $.

By \cite[Proposition 1.6.10]{BH98}, the complexes $K_\bullet(\mathbf{x};M)$ and $K^\bullet(\mathbf{x};M)$ are isomorphic, up to a relabelling of the cohomological degrees and a shift by $\ell$. In particular, $H_i(\mathbf{x};M) \cong H^{\ell-i}(\mathbf{x};M)$ for all $i$. 

Let $\mathbf{x}\coleq x_1, \dots, x_\ell$ and $\mathbf{y}\coleq y_1, \dots, y_m$ with $m,\ell >0$ be two sequences which generate an ideal $I$ of $R$. Then $H^i(\mathbf{x};M) = 0$ for all $i \leq n$ if and only if $H^i(\mathbf{y};M) = 0$ for all $i \leq n$, see \cite[Proposition 3.4]{HS20} which uses \cite[Corollary 1.6.2 and Corollary 1.6.10(d)]{BH98}. Since we are only interested in when the cohomology vanishes, we shall abuse the notation and denote $H^i(\mathbf{x};M)$ simply by $H^i(I;M) $.

The following Proposition will be essential for our purposes. It is implicit in the proof of in \cite[p. 5, Théorème 1]{BourAC}, and moreover allows $J$ to be an infinitely generated ideal. The proposition is stated and proved using derived categories in \cite[Corollary 3.10]{HS20}. 

\begin{prop} \label{prop:depth}
If $R$ is a commutative ring, then 
\[
\bigcap_{i =0}^{n-1} \Ker \Ext^i_R(R/J, -) = \bigcap_{i =0}^{n-1}\Ker H^i(J;-)
\]
as subcategories of $\Mod R$ for every $n>0$. 
\end{prop}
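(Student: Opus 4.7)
The plan is to show that both sides of the claimed equality describe the same property of $M$: that $\grade_R(J;M) \geq n$. For the left-hand side this is immediate from \cref{def:depth}, so the content is to prove that the Koszul cochain complex also detects the grade, i.e.\ that the smallest $i$ with $H^i(J;M)\neq 0$ equals the smallest $i$ with $\Ext^i_R(R/J,M)\neq 0$.

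I would first reduce to the case of a finitely generated $J$. Writing $J$ as the directed union of its finitely generated subideals $J_\alpha$, one has $R/J = \varinjlim R/J_\alpha$, and the vanishing of $\Ext^i_R(R/J,M)$ for $i<n$ is controlled by the corresponding vanishing of $\Ext^i_R(R/J_\alpha,M)$ across all $\alpha$ (via the usual Milnor/Mittag--Leffler machinery relating $\Ext$ to a direct limit in the first argument). On the Koszul side, the convention recalled in \cref{ss:Koszul} is already formulated in terms of finitely generated subideals, so that side passes directly. After this reduction, assume $J = (\mathbf{x})$ with $\mathbf{x} = (x_1,\dots,x_\ell)$.

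I would then induct on $\ell$. The base case $\ell = 1$ is explicit: $H^0(x;M) = (0:_M x) = \Hom_R(R/xR, M)$, and once this is zero, applying $\Hom_R(-,M)$ to $0 \to xR \to R \to R/xR \to 0$ together with the identification $xR \cong R/\ann{x}$ yields $\Ext^1_R(R/xR, M) \cong M/xM = H^1(x;M)$. For the inductive step with $\mathbf{x}' = (x_1,\dots,x_{\ell-1})$ and $J' = (\mathbf{x}')$, the tensor decomposition $K_\bullet(\mathbf{x}) = K_\bullet(\mathbf{x}') \otimes_R K_\bullet(x_\ell)$ produces a Mayer--Vietoris-style long exact sequence
\[
\cdots \to H^{i-1}(\mathbf{x}';M) \xrightarrow{x_\ell} H^{i-1}(\mathbf{x}';M) \to H^i(\mathbf{x};M) \to H^i(\mathbf{x}';M) \xrightarrow{x_\ell} H^i(\mathbf{x}';M) \to \cdots .
\]
On the $\Ext$ side, I would combine the inductive hypothesis for $J'$ with a Rees-type dimension shift: when $x \in J$ is $M$-regular, $\Ext^i_R(R/J,M/xM) \cong \Ext^{i+1}_R(R/J,M)$, allowing one to propagate vanishing of $\Ext$ in both directions across the induction.

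The main obstacle will be matching the Koszul long exact sequence with the $\Ext$ information in the inductive step, because the naive sequence $0 \to R/J' \xrightarrow{x_\ell} R/J' \to R/J \to 0$ fails whenever $x_\ell$ is a zero-divisor modulo $J'$, and the correction term $(J':x_\ell)/J'$ must be tracked. A uniform treatment which also handles an infinitely generated $J$ in one stroke is the derived-categorical argument of \cite[Corollary 3.10]{HS20}, to which the authors will presumably defer.
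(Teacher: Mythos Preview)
Your proposal and the paper's proof end up in the same place: both ultimately defer to external sources rather than giving a self-contained argument. The paper simply references Bourbaki \cite[p.~5, Th\'eor\`eme~1]{BourAC}, sketching that one inclusion comes from a cohomological version of \cite[p.~4, Corollaire~1]{BourAC} (that Koszul cohomology is $J$-torsion) and the reverse inclusion is by contradiction; it then cites \cite[Corollary~3.10]{HS20} as a derived-categorical alternative. So your prediction that the authors ``presumably defer'' to \cite{HS20} is exactly right.

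Your inductive strategy on the number of generators is a genuinely different route from Bourbaki's two-inclusion argument, and your base case is correct (once $H^0(x;M)=0$, one has $\ann(x)\cdot M = 0$ since $x(am)=(xa)m=0$ forces $am=0$; hence $(0:_M \ann x)=M$ and indeed $\Ext^1_R(R/xR,M)\cong M/xM$). The long exact sequence you write down for the inductive step is also the right tool.

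The one real gap is your reduction to finitely generated $J$. Writing $R/J = \varinjlim R/J_\alpha$ does not give clean control of $\Ext^i_R(R/J,M)$ in terms of the $\Ext^i_R(R/J_\alpha,M)$ via any standard Mittag--Leffler/Milnor sequence: $\RHom$ does not commute with direct limits in the first variable, and the relevant $\varprojlim^p$ terms need not vanish. Bourbaki handles the infinitely generated case by working with a more general Koszul complex directly rather than by such a reduction, and \cite{HS20} sidesteps it via the derived category. In the paper's applications $R$ is noetherian and $J$ is automatically finitely generated, so this gap is harmless in context, but as a proof of the stated proposition it is the step that would not go through as written.
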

\begin{proof}
We comment on how the statement follows from \cite{BourAC}. This follows by the proof of \cite[p. 5, Théorème 1]{BourAC}. Explicitly, they first show that $\bigcap_{i =0}^{n-1} \Ker \Ext^i_R(R/J, -) \subseteq \bigcap_{i =0}^{n-1}\Ker H^i(J;-)$ using a cohomology version of \cite[p. 4, Corollaire 1]{BourAC} and finally show by contradiction that there is no module $M$ such that $M \in  \bigcap_{i =0}^{n}\Ker H^i(J;-) $ and $\Ext^n_R(R/J, M)\neq 0$.
\end{proof}

\begin{rmk} \label{rem:koszul_cos} Let $I$ be a finitely generated ideal. Let the following denote the Koszul cochain complex $K^\bullet(I;M)$ defined using an arbitrarily chosen set of generators $\mathbf{x}\coleq x_1, \dots, x_\ell$ of $I$, 

\[ \begin{tikzcd}
0 \arrow[r] &M_{0} \arrow[r, "d^0"] &  M_{1}  \arrow[r, "d^{1}"] & \cdots \arrow[r, "d^{\ell-1}"] &M_\ell  \arrow[r] &0,
\end{tikzcd}
\]
where each $M_i$ denotes a finite direct sum of copies of $M$. 

Let $S_{k}(I; M)$ for $0 < k < \ell$ denote the module $\Coker (d^{k-1}) $. If $$M\in \bigcap_{i =0}^{k-1}\Ker H^i(I;-)=\bigcap_{i =0}^{k-1} \Ker \Ext^i_R(R/I, -),$$ then the complex provides a finite resolution of length $k$ of $S_{k}(I; M)$ by finite direct sums of copies of $M$. 

In the special case that $M = S$ for a flat ring morphism $R \to S$, then $H_i(I;S)$ is exactly the $i$th Koszul homology of $IS$ with coefficients in $S$ in $\Mod S$, see \cite[Proposition 1.6.7]{BH98}. 

Particular instances of the Koszul-type modules $S_{k}(I; M)$ are going to be our prototype of modules with finite flat dimension, and its character duals (or also their Matlis duals) are going to be the prototype of pure-injective modules with finite injective dimension, cf. Proposition~\ref{p:torpair-generator}.
\end{rmk}

\subsection{Cotilting classes over commutative rings. }\label{ss:cotiltingcomm}

Over a commutative ring, cotilting classes of cofinite type are classified in terms of certain finite sequences of faithful hereditary torsion pairs of finite type, or equivalently, finite sequences of certain faithful finitely generated Gabriel topologies, see \cite[\S 2]{HS20} for details.

\begin{theorem}\cite[Theorem 6.2]{HS20},\cite[Theorem 4.2, Corollary 4.4]{AHPST14}\label{cotilting_characterisation}
Let $R$ be a commutative ring and $n \geq 0$. Then there are one-to-one correspondences between the following collections. 
\begin{itemize}
	\item[(i)] Descending sequences of torsion-free classes of finite type and closed under injective envelopes
		\[
		(\Ecal_0, \Ecal_1, \dots, \Ecal_{n-1}),
		\]
		such that $\Omega_{-i}(R) \in \Ecal_{i}$ for $0 \leq i <n$,
	\item[(ii)] Descending sequences of finitely generated Gabriel topologies
		\[
		(\Gcal_0, \Gcal_1, \dots, \Gcal_{n-1}),
		\]
		such that $\Ext^i_R(R/J, R)=0$ for every $J \in \Gcal_i$ and $0 \leq i <n$,
	\item[(iii)] $n$-cotilting classes of cofinite type in $\Mod R$,
	\item[(iv)] resolving subcategories of $\mod R$ of modules of projective dimension at most $n$. 
\end{itemize} 
The correspondences (using the notation of \cref{ss:Koszul}) are given as follows.

\begin{itemize}[leftmargin=3cm]
	\item[(i) $\to$ (ii)] \[\Gcal_i \coleq \{J \leq R \mid \Hom_R(R/J, F)=0 \text{ for every } E \in \Ecal_i \}\]
	\item[(ii) $\to$ (iii)] \[\Ccal \coleq \bigcap_{i=0}^{n-1}\bigcap_{J \in \Gcal^f_i}  (S_i(J;R)^\top)
	\] where $\Gcal^f_i$ are the finitely generated ideals in $\Gcal_i$. 
\end{itemize}
If $R$ is moreover noetherian, then we can omit the ``of cofinite type'' in (iii) and for every ideal $J$ in $\Gcal_i$,  $\depth {(I;R)}>i$, or $i< \inf\{j \mid \Ext^j_R(R/J,R)\neq 0\}$. 
\end{theorem}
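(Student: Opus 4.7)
The plan is to prove the theorem by first establishing the correspondence (iii) $\leftrightarrow$ (iv) via Tor-Ext duality, and then linking these to the sequences in (i) and (ii) through Koszul cohomology, following the strategy of \cite[Theorem 6.2]{HS20}.

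For (iii) $\leftrightarrow$ (iv), given a resolving $\Scal \subseteq \mod R \cap \Pcal_n$, I would set $\Ccal \coleq \Scal^\top$ and show it is $n$-cotilting of cofinite type using \cref{def-tor-pairs}: the Tor-pair $({}^\top\Ccal, \Ccal)^\top$ is hereditary (as $\Scal$ is resolving, hence closed under syzygies), the generators are strongly finitely presented of projective dimension at most $n$, and $\Ccal$ is definable since it is the Tor-orthogonal of a set of finitely presented modules. Conversely, for an $n$-cotilting class $\Ccal$ of cofinite type, $\Scal \coleq {}^\top\Ccal \cap \mod R$ is resolving in $\mod R \cap \Pcal_n$ and $\Scal^\top = \Ccal$, again by \cref{def-tor-pairs}.

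The bijection (i) $\leftrightarrow$ (ii) is the standard bijection between hereditary torsion theories of finite type and finitely generated Gabriel topologies, via $\Ecal \leftrightarrow \Gcal$ with $\Ecal = \{M \mid \Hom_R(R/J, M) = 0 \text{ for all } J \in \Gcal\}$. The closure of each $\Ecal_i$ under injective envelopes translates to the requirement $\Ext^i_R(R/J, R) = 0$ for $J \in \Gcal_i$, through the identification $\Ext^i_R(R/J, R) \cong \Hom_R(R/J, \Omega_{-i}(R))$ together with the condition $\Omega_{-i}(R) \in \Ecal_i$. For the bridge (ii) $\to$ (iii), I would adopt the explicit formula $\Ccal = \bigcap_{i=0}^{n-1}\bigcap_{J \in \Gcal_i^f} S_i(J;R)^\top$ using the Koszul cocycle modules of \cref{rem:koszul_cos}. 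The crucial observation is that Tor-vanishing against $S_i(J; R)$, combined with lower-degree Tor-vanishings, translates via the short flat resolution of $S_i(J; R)$ in \cref{rem:koszul_cos} and the Koszul-Ext comparison of \cref{prop:depth} into the depth condition $H^i(J; M) = 0$, which in turn says that the $i$-th syzygy of $M$ lies in the torsion-free class $\Ecal_i$ corresponding to $\Gcal_i$. One then verifies that the $S_i(J; R)$ lie in $\mod R \cap \Pcal_n$ and that their resolving closure generates $\Ccal$ as a Tor-orthogonal.

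For the last part, in the noetherian case the qualifier ``of cofinite type'' in (iii) can be dropped by appealing to \cite[Theorem 4.2]{AHPST14}, which says that every cotilting class over a commutative noetherian ring is of cofinite type. The grade/depth reformulation for ideals $J \in \Gcal_i$ then follows immediately from \cref{prop:depth}, identifying vanishing of Koszul cohomology $H^j(J; R)$ with vanishing of $\Ext^j_R(R/J, R)$ for $j < i$. The main obstacle is the bridge from (ii) to (iii): verifying that the Koszul cocycles $S_i(J; R)$ are the correct generators for the cotilting class requires a careful matching between the sequence-of-Gabriel-topologies data and the layered structure of the cotilting class, which is where the finer analysis of \cite{HS20} enters.
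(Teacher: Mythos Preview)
The paper does not supply its own proof of this theorem; it is stated in the background section purely as a citation of \cite[Theorem 6.2]{HS20} and \cite[Theorem 4.2, Corollary 4.4]{AHPST14}, with no proof environment following it. There is therefore nothing in the paper itself to compare your attempt against, and your sketch is essentially an outline of the argument from those cited sources, which is the appropriate thing to do here.

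One technical slip worth flagging: the isomorphism $\Ext^i_R(R/J,R) \cong \Hom_R(R/J,\Omega_{-i}(R))$ you invoke does not hold on the nose. The condition $\Omega_{-i}(R) \in \Ecal_i$ in (i) says $\Hom_R(R/J,\Omega_{-i}(R)) = 0$ for all $J \in \Gcal_i$, and matching this with the $\Ext$-vanishing in (ii) requires an inductive argument using the descending-sequence hypothesis (so that the lower $\Ext$-groups already vanish) together with closure under injective envelopes, not a single identification. Apart from this, your outline of (iii) $\leftrightarrow$ (iv) via \cref{def-tor-pairs}, (i) $\leftrightarrow$ (ii) via the torsion-theory/Gabriel-topology dictionary, and the Koszul-cocycle bridge (ii) $\to$ (iii) through \cref{rem:koszul_cos} and \cref{prop:depth} is the correct shape of the argument in \cite{HS20}.
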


\begin{cor}
Let $R$ be a commutative noetherian ring. If $(\Ecal, \Ccal)^\top$ is a cotilting $\Tor$-pair (that is, a hereditary $\Tor$-pair generated by a class of modules of finite flat dimension such that $\Ccal$ is definable), then for every prime $\pp \in \Spec R$, $\Ccal_{\pp}$ is a cotilting class of dimension $\depth (R_\pp)$.
\end{cor}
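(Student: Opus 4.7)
The plan is to apply the classification Theorem~\ref{cotilting_characterisation} to encode $\Ccal$ as a descending sequence of finitely generated Gabriel topologies and then to track this data under localization at $\pp$. Writing the cotilting $\Tor$-pair as $(\Ecal, \Ccal)^\top$ with $\Ecal \subseteq \Fcal_n$ for some $n \geq 0$, Proposition~\ref{def-tor-pairs} combined with the definability of $\Ccal$ yields that $\Ccal$ is an $n$-cotilting class; since $R$ is noetherian, the addendum in Theorem~\ref{cotilting_characterisation} upgrades this to cofinite type. This produces a descending sequence of finitely generated Gabriel topologies $(\Gcal_0, \Gcal_1, \dots, \Gcal_{n-1})$ with $\depth(J;R) > i$ for every $J \in \Gcal_i$, together with the explicit presentation
\[
\Ccal \;=\; \bigcap_{i=0}^{n-1}\bigcap_{J \in \Gcal_i^f} S_i(J;R)^\top.
\]

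The next step is to identify $\Ccal_\pp$, viewed as a class of $R_\pp$-modules, with the cotilting class over $R_\pp$ associated to the localized sequence $((\Gcal_0)_\pp, \ldots, (\Gcal_{n-1})_\pp)$. As each $S_i(J;R)$ is strongly finitely presented and Koszul complexes are compatible with flat base change, one has $S_i(J;R)_\pp \cong S_i(J_\pp; R_\pp)$, and flatness of $R_\pp$ over $R$ gives
\[
\Tor^R_k(S_i(J;R), M) \;\cong\; \Tor^{R_\pp}_k(S_i(J_\pp;R_\pp), M)
\]
for any $R_\pp$-module $M$. The depth condition passes through localization: flat base change yields $\Ext^j_R(R/J, R)_\pp \cong \Ext^j_{R_\pp}(R_\pp/J_\pp, R_\pp)$, so $\depth(J_\pp; R_\pp) \geq \depth(J;R) > i$ whenever $J_\pp$ is a proper ideal of $R_\pp$; those levels for which $J_\pp = R_\pp$ contribute only the improper Gabriel topology and can be truncated without changing the associated cotilting class.

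To conclude, observe that a level $i$ of the localized sequence is nontrivial only if there exists some $J \in \Gcal_i$ with $J \subseteq \pp$, and for such $J$ the standard inequality $\depth(J_\pp; R_\pp) \leq \depth(\pp R_\pp; R_\pp) = \depth(R_\pp)$ forces $i < \depth(R_\pp)$. Hence the localized sequence of Gabriel topologies has effective length at most $\depth(R_\pp)$, exhibiting $\Ccal_\pp$ as a $\depth(R_\pp)$-cotilting class over $R_\pp$. I expect the main obstacle to lie in the clean identification of $\Ccal_\pp$ with the cotilting class attached to the localized Gabriel topologies, which requires matching the Koszul-type generators from Theorem~\ref{cotilting_characterisation} with their localizations; by comparison the depth inequalities used to bound the dimension are standard consequences of flat base change together with the usual formula $\depth(I;R) = \inf_{\qq \in V(I)} \depth(R_\qq)$.
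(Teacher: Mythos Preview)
Your argument is correct, but it takes a more laborious route than the paper's. You encode $\Ccal$ via Theorem~\ref{cotilting_characterisation} as a sequence of Gabriel topologies, then track this data under localisation and check that the localised sequence again satisfies the hypotheses of Theorem~\ref{cotilting_characterisation}, finally using the inequality $\depth(J_\pp;R_\pp) \leq \depth(R_\pp)$ to truncate. This works, and has the side benefit of exhibiting explicit cotilting data for $\Ccal_\pp$; but, as you yourself anticipate, the clean identification of $\Ccal_\pp$ with the class attached to the localised Gabriel topologies involves some bookkeeping (cofinality of $\{J_\pp : J \in \Gcal_i\}$ inside $(\Gcal_i)_\pp$, well-definedness of the localised topology, etc.).

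The paper sidesteps all of this by arguing at the level of the $\Tor$-pair rather than the Gabriel data. By Lemma~\ref{torpair-localisation}, $(\Ecal_\pp,\Ccal_\pp)^\top$ is again a hereditary $\Tor$-pair with $\Ccal_\pp = \Ccal \cap \Mod{R_\pp}$; definability of $\Ccal_\pp$ is then inherited from $\Ccal$, and $\Ecal_\pp \subseteq \Fcal_n(R_\pp)$ since localisation preserves flat dimension. Proposition~\ref{def-tor-pairs} now gives directly that $\Ccal_\pp$ is a cotilting class over $R_\pp$. For the dimension bound one invokes the general fact, via Theorem~\ref{cotilting_characterisation} and the Auslander--Buchsbaum identity $\findim(R_\pp)=\depth(R_\pp)$, that \emph{every} cotilting class over a local noetherian ring is $\depth(R_\pp)$-cotilting. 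No explicit tracking of Koszul generators or Gabriel topologies is needed. Your approach is not wrong, just heavier; the paper's argument shows that the result follows almost formally from localisation of $\Tor$-pairs plus the universal bound on cotilting dimension over local rings.
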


\begin{proof}

By \cref{torpair-localisation} $(\Ecal_\pp, \Ccal_\pp)$ is a hereditary $\Tor$-pair and $\Ccal_\pp$ is a definable class in $\Mod R_\pp$. Recall that since $R_\pp$ is local, $\depth (R_\pp) = \findim (R_\pp)$ is finite (see discussion in \cref{ss:RG}) so in particular every cotilting class is of dimension at most $\findim (R_\pp)$, see \cref{cotilting_characterisation}.
\end{proof}

\subsection{Homological properties of commutative noetherian rings.}\label{ss:RG} 

For any prime $\pp \in \Spec R$, we let $k(\pp)$ denote the field of fractions of the residue field $R/\pp$.

In Proposition~\ref{p:torpair-generator} we introduce the \emph{prototype} of module of finite flat dimension  that we will use to prove our classification results which are a particular instance of the Koszul cosyzygies, cf. Remark~\ref{rem:koszul_cos}. 

The local dual of such modules are pure injective modules of finite injective dimension, and also are their Matlis dual.

\begin{prop}\label{p:torpair-generator}
Let $R$ be a commutative noetherian ring, and fix a prime $\pp \in \Spec R$ and an integer $k$ such that $\depth{R_\pp}\geq k>0$. Then the following statements hold for $S_{k}(\pp R_\pp; R_\pp)$,
\begin{enumerate}
\item[(1)] $S_{k}(\pp R_\pp; R_\pp)=S_{k}(\pp ; R_\pp)$ is an $R$-module of flat dimension $k$ and an $R_\pp$-module of projective dimension $k$;
\item[(2)] its  character dual $S_{k}(\pp R_\pp; R_\pp)^+$ as well as its Matlis dual $$\mathrm{Hom}_{R _\pp} (S_{k}(\pp R_\pp; R_\pp), E(R_\pp/\pp R_\pp))=S_{k}(\pp R_\pp; R_\pp)^\nu$$  are  both pure-injective modules of finite injective dimension  $k$ and $$S_{k}(\pp R_\pp; R_\pp)^\top= {}^\perp \left(S_{k}(\pp R_\pp; R_\pp)^+\right)={}^\perp \left(S_{k}(\pp R_\pp; R_\pp)^\nu\right ). $$
\end{enumerate}
Moreover, the following are equivalent for an $R$-module $M$.
\begin{itemize}
	\item[(i)] $M \in S_{k}(\pp R_\pp; R_\pp)^\top$,
	\item[(ii)] $\depth_{R_\pp} (M_\pp) \geq k$,
	\item[(iii)] $\pp \notin \Ass {\bigoplus^{k-1}_{i=0}\Omega_{-i}(M)}$.
\end{itemize}
\end{prop}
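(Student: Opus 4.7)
The plan is to handle the three clusters of claims in turn, using the Koszul-based free resolution of $S_k \coleq S_k(\pp R_\pp; R_\pp)$ as the unifying tool.

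For (1), the Koszul complex commutes with the flat base change $R\to R_\pp$, so $S_k(\pp; R_\pp) = S_k(\pp R_\pp; R_\pp)$ on the nose. The hypothesis $\depth R_\pp \geq k$ together with \cref{prop:depth} gives $H^i(\pp R_\pp; R_\pp) = 0$ for $0\leq i<k$, so \cref{rem:koszul_cos} produces an exact sequence $0\to M_0 \to M_1 \to \cdots \to M_k \to S_k \to 0$ of finitely generated free $R_\pp$-modules. This gives $\pd_{R_\pp}(S_k)\leq k$ and, since free $R_\pp$-modules are flat over $R$, also $\fd_R(S_k)\leq k$. For equality I would compute $\Tor^R_k(k(\pp), S_k) = \Tor^{R_\pp}_k(k(\pp), S_k)$ from this resolution: every Koszul differential has entries in $\pp R_\pp$, hence vanishes after tensoring with $k(\pp)$, leaving the nonzero cycle $k(\pp)\otimes_{R_\pp} M_0 \cong k(\pp)$.

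For (2), $S_k^+$ is automatically pure-injective. The Matlis dual $S_k^\nu$ is pure-injective over $R$ as well, via the $\Hom$-$\otimes$ adjunction $\Hom_R(-, S_k^\nu)\cong \Hom_{R_\pp}(-\otimes_R S_k, E(R_\pp/\pp R_\pp))$: a pure embedding $A\hookrightarrow B$ of $R$-modules remains injective after $-\otimes_R S_k$, and by injectivity of $E(R_\pp/\pp R_\pp)$ the Hom-functor becomes surjective. Applying the formula from \cref{ss:hom-formulae} to $A=M$, $B=S_k$ (as $R$-$R_\pp$ bimodule), $C=E(R_\pp/\pp R_\pp)$ (which is also injective over $R$), I obtain $\Ext^i_R(M, S_k^\nu)\cong \Hom_{R_\pp}(\Tor^R_i(M, S_k), E(R_\pp/\pp R_\pp))$. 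Since $\Tor^R_i(M, S_k)$ is naturally an $R_\pp$-module and $E(R_\pp/\pp R_\pp)$ is an injective cogenerator of $\Mod{R_\pp}$ ($R_\pp$ being local noetherian), this $\Ext$ vanishes iff $\Tor^R_i(M, S_k)=0$, which gives both ${}^\perp S_k^\nu = S_k^\top$ and $\id_R(S_k^\nu)=k$ (upper bound from $\pd_{R_\pp}(S_k)=k$, lower bound from the nonvanishing of $\Tor^R_k(k(\pp),S_k)$ obtained in (1)). The analogous argument with $\Qbb/\Zbb$ in place of $E(R_\pp/\pp R_\pp)$ handles $S_k^+$.

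For (i) $\Leftrightarrow$ (ii), I would reuse the resolution from (1): since the $M_j$ are free $R_\pp$-modules, hence flat over $R$, we have $\Tor^R_i(M, S_k)\cong \Tor^{R_\pp}_i(M_\pp, S_k)$. Tensoring the resolution with $M_\pp$ recovers the Koszul cochain complex $K^\bullet(\pp R_\pp; M_\pp)$ truncated at position $k$, and an indexing conversion yields $\Tor^R_i(M, S_k)\cong H^{k-i}(\pp R_\pp; M_\pp)$ for $1\leq i\leq k$ (zero for $i>k$); truncation is harmless because only Koszul cohomology in degrees $<k$ is involved. Hence vanishing of all $\Tor^R_{\geq 1}(M, S_k)$ is equivalent to $H^j(\pp R_\pp; M_\pp)=0$ for $j<k$, which by \cref{prop:depth} amounts to $\depth_{R_\pp}(M_\pp)\geq k$. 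For (ii) $\Leftrightarrow$ (iii), I would invoke Bass's description of minimal injective resolutions: $\Omega_{-i}(M)$ is essential in the $i$-th term $I^i = \bigoplus_\qq E(R/\qq)^{\mu^i(\qq, M)}$ (with $\Omega_0(M)=M$ and $I^0=E(M)$), so $\Ass{\Omega_{-i}(M)} = \{\qq\in\Spec R \mid \Ext^i_{R_\qq}(k(\qq), M_\qq)\neq 0\}$. Therefore $\pp\notin \Ass{\bigoplus_{i=0}^{k-1}\Omega_{-i}(M)}$ iff $\Ext^i_{R_\pp}(k(\pp), M_\pp)=0$ for all $i<k$, which is by definition $\depth_{R_\pp}(M_\pp)\geq k$.

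The main obstacle will be the indexing bookkeeping that identifies $\Tor^R_i(M, S_k)$ with truncated Koszul cohomology of $M_\pp$: one must translate between chain-homological and cochain-cohomological conventions, and verify that truncating the Koszul complex does not affect the cohomology in the relevant low degrees.
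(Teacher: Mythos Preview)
Your proposal is correct and follows essentially the same architecture as the paper's proof: the truncated Koszul cochain complex furnishes the flat/projective resolution for (1); the $\Hom$-$\otimes$ adjunction against an injective cogenerator yields both pure-injectivity and the $\Tor$-$\Ext$ transfer for (2); and the equivalence (i)$\Leftrightarrow$(ii) goes through Koszul (co)homology of $M_\pp$ combined with \cref{prop:depth}.

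Two points where you diverge are worth noting. First, you actually verify that the flat and projective dimensions equal $k$ (via minimality of the Koszul differentials modulo $\pp R_\pp$), whereas the paper's proof only spells out the inequality $\leq k$ and leaves equality implicit. Second, for (ii)$\Leftrightarrow$(iii) the paper runs an induction on $k$, shifting along the minimal injective resolution and using $\Hom_R(R/\pp,\Omega_{-i-1}(M))\cong \Ext^1_R(R/\pp,\Omega_{-i}(M))$; your direct appeal to Bass numbers $\mu^i(\pp,M)=\dim_{k(\pp)}\Ext^i_{R_\pp}(k(\pp),M_\pp)$ and the identification $\Ass{\Omega_{-i}(M)}=\Ass{I^i}=\{\qq:\mu^i(\qq,M)\neq 0\}$ is shorter and avoids the induction altogether. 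For (i)$\Leftrightarrow$(ii) the paper cites \cite[Proposition~5.12]{HS20} for the passage from $\Tor$ vanishing to Koszul cohomology vanishing, whereas you derive the identification $\Tor^R_i(M,S_k)\cong H^{k-i}(\pp R_\pp;M_\pp)$ directly from the resolution; the content is the same, and your remark that truncation is harmless because only degrees $<k$ matter is exactly the point that needs checking.
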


\begin{proof} $(1)$ By Proposition~\ref{prop:depth}, if $\depth{R_\pp}\geq k>0$ then the Koszul cochain complex
$K^\bullet(\pp;R_\pp)=K^\bullet(\pp R_\pp;R_\pp)$ is exact up to degree $k-1$ and all its terms are finite direct sums of copies of $R_\pp$, cf. Remark~\ref{rem:koszul_cos}. Therefore $S_{k}(\pp R_\pp; R_\pp)$ is an $R$-module of flat dimension $k$, as claimed in the statement. 

$(2)$ Taking duals with respect to an injective module takes any module to a pure injective module, and flat modules to injective modules. Moreover, both the character module and the Matlis dual  are duals with respect to an injective cogenerator, hence they are not only exact functors but also preserve dimensions. That is, the dual of the Koszul resolution of $S_{k}(\pp R_\pp; R_\pp)$  witnessing that the module has projective dimension $k$, yields an injective resolution  witnessing that the dual module has injective dimension $k$.

For the equalities between the $\Tor$-orthogonals and the left $\Ext$-orthogonals of the local dual of $S_{k}(\pp R_\pp; R_\pp)$ we appeal to Lemma~\ref{tor-induce-cotor} which holds because of the homological formulae given in \S \ref{ss:hom-formulae}. As these homological formulas also hold for the case of the Matlis dual, we also deduce that $S_{k}(\pp R_\pp; R_\pp)^\top= {}^\perp \left(S_{k}(\pp R_\pp; R_\pp)^\nu\right ).$

\medskip

Now we prove the second part of the statement describes the $\mathrm{Tor}$-orthogonal of $S_{k}(\pp R_\pp; R_\pp)$.

(i) $\Leftrightarrow$ (ii) Consider the following equivalences. 
\begin{align}
M \in S_{k}(\pp R_\pp; R_\pp)^\top 	&\Leftrightarrow  \Tor^R_{i}(S_{k}(\pp R_\pp; R_\pp), M)=0 \text{ for } 1\leq i \leq k \label{eq1}\\
							&\Leftrightarrow  \Tor^{R_\pp}_{i}(S_{k}(\pp R_\pp; R_\pp), M_\pp)=0 \text{ for } 1\leq i \leq k \label{eq2} \\
							&\Leftrightarrow  \Ext^{i}_{R_\pp}(R_\pp/\pp R_\pp, M_\pp)=0 \text{ for } 0\leq i \leq k-1 \label{eq3}
\end{align}
The equivalence \cref{eq2} holds as $R \to R_\pp$ is a flat ring epimorphism. The equivalence \cref{eq3} holds by \cite[Proposition 5.12]{HS20}, which is shown via the equivalence of the Koszul homology and cohomology, that is that $H_i(I;-) \cong H^{n-i}(I;-)$ for an ideal $I$.
Finally, by definition, $\depth_{R_\pp} M_\pp \geq k$ if and only if $\Ext^{i}_{R_\pp}(R_\pp/\pp R_\pp, M_\pp) =0$ for $0\leq i \leq k-1$.

(ii) $\Leftrightarrow$ (iii)
If $k=1$, then $\depth_{R_\pp} (M_\pp) \geq 1$ if and only if $\Hom_{R_\pp}(R_\pp/\pp R_\pp, M_\pp) =0$ if and only if $\pp \notin \Ass M$, as $\pp \in \Ass M$ if and only if $\pp R_\pp \in \Ass M_\pp$.
 

 For the inductive step, we apply the functor $\Hom_R(R/\pp, -)$ to the minimal injective resolution of $M$. Then, as $\Ass M = \Ass {E(M)}$ and $E(M_\pp) \cong E(M)_\pp$, we find an isomorphism $\Hom_R(R/\pp,\Omega_{-i-1}(M)) \cong \Ext^1_R(R/\pp, \Omega_{-i}(M)) $. By dimension shifting, $\Ext^{i}_{R_\pp}(R_\pp/\pp R_\pp, M_\pp) \cong \Ext^1_R(R/\pp, \Omega_{-i}(M))$ for $i>0$, so the vanishing of the $\Ext^{i}_{R_\pp}(R_\pp/\pp R_\pp, M_\pp)$ for $0 \leq i <k$ coincides with the vanishing of $\Hom_R(R/\pp,\Omega_{-i-1}(M))$, or in other words that $\pp \notin \Ass{\Omega_{-i-1}(M)}$.
\end{proof}

For further quoting, we introduce some of the finitistic dimensions of a ring. These are quite well understood for commutative noetherian rings. 

Let us denote by $\Findim(R) = \sup \{\pd M \mid M \in \Pcal\}$, $\findim(R) = \sup \{\pd M \mid M \in \Pcal^{<\aleph_0}\}$, and $\WFindim(R) = \sup \{\fd M \mid M \in \Fcal\}$, respectively, the \newterm{finitistic}, \newterm{small finitistic}, and \newterm{weak finitistic dimension} of $R$.

 If $R$ is a commutative noetherian ring, by classical results of Bass \cite{B62} and Raynaud-Gruson \cite{RG71}, $\Findim(R)$ coincides with $\Kdim{R}$,  the Krull dimension of $R$, \cite[Théor\`{e}me 3.2.6]{RG71}. Assume now that $\Kdim(R)<\infty$. Then any flat $R$-module belongs to $\Pcal$ \cite{Jen70}, \cite[Corollaire 3.2.7]{RG71}, and therefore $\Pcal$ coincides with the class $\Fcal$ of all modules of finite flat dimension. It follows that $\Pcal$ can be described as the class of all modules of flat dimension bounded by $\Kdim(R)$. In symbols, we have $\Pcal_{\Kdim(R)} = \Pcal = \Fcal = \Fcal_{\Kdim(R)}$. 

If $R$ is local with maximal ideal $\mm$ then $\findim(R) = \depth(R)$ by the Auslander-Buchsbaum formula. Finally, for any $R$ we have the identity $\WFindim(R) = \sup \{\depth (R_\pp) \mid \pp \in \Spec R\}$, \cite[Proposition 5.1]{B62}.


\section{Preliminary results}\label{s:prelimaryresults}

\subsection{Hereditary cotorsion pairs cogenerated by pure-injectives}
Let $(\Ccal,\Wcal)$ be a hereditary cotorsion pair cogenerated by a class $\Pcal$ of pure-injective $R$-modules. We define the classes $\Ccal_{(m)} = \Perp{>m}\Wcal = \Perp{>m}\Pcal$. This way we obtain for any $m \geq 0$ a hereditary cotorsion pair $(\Ccal_{(m)},\Wcal_{(m)})$. If $(\Ccal,\Wcal)$ is cogenerated by a class $\Pcal$ of pure-injectives then $(\Ccal_{(m)},\Wcal_{(m)})$ is cogenerated by $\Omega_{-k}\Pcal$, the class of $k$-th minimal cosyzygies of modules from $\Pcal$. Since any such minimal cosyzygy is again pure-injective \cite[Lemma 6.20]{GT12}, this hereditary cotorsion pair is again cogenerated by pure-injectives.

\begin{lemma}\label{L:cotorsion-pair-shifts}
	Let $(\Ccal, \Wcal)$ be a hereditary cotorsion pair and 
	\begin{tikzcd}[cramped, sep=small]
	0 \arrow[r] &L \arrow[r] &  M  \arrow[r] & N  \arrow[r] &0
	\end{tikzcd}
	be a short exact sequence of modules in $\lMod R$. Then the following hold. 
	\begin{enumerate}
		\item[(i)] If $L \in \Ccal$ and $M \in \Ccal_{(i)}$ for $i>0$, then $N \in \Ccal_{(i)}$.
		\item[(ii)] If $M \in  \Ccal_{(i)}$ and $N \in \Ccal_{(i+1)}$ for $i\geq 0$, then $L \in \Ccal_{(i)}$.
	\end{enumerate}
	\end{lemma}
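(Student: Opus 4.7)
The plan is to apply the contravariant functor $\Hom_R(-,W)$ for an arbitrary $W \in \Wcal$ to the given short exact sequence and read off the vanishing statements from the resulting long exact sequence of $\Ext$ groups. Recall that, by definition, $\Ccal_{(m)}$ is the class of modules $X$ with $\Ext^j_R(X,W)=0$ for every $j>m$ and every $W \in \Wcal$, and since $(\Ccal,\Wcal)$ is hereditary we have $\Ccal = \Ccal_{(0)}$.

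For part (i), I would fix $W \in \Wcal$ and an integer $j > i$ (so in particular $j \geq i+1 \geq 2$, using the hypothesis $i>0$) and extract the piece
\[
\Ext^{j-1}_R(L,W)\longrightarrow \Ext^{j}_R(N,W)\longrightarrow \Ext^{j}_R(M,W)
\]
of the long exact sequence. The left-hand term vanishes because $L \in \Ccal$ and $j-1 \geq 1$; the right-hand term vanishes because $M \in \Ccal_{(i)}$ and $j>i$. Hence $\Ext^{j}_R(N,W)=0$, which gives $N \in \Ccal_{(i)}$. The role of the hypothesis $i>0$ becomes transparent here: for $i=0$ the piece we would need starts with $\Hom_R(L,W)$, which is not forced to vanish, so the argument breaks down exactly at that boundary.

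For part (ii), I would again fix $W \in \Wcal$ and $j > i$, and now extract the piece
\[
\Ext^{j}_R(M,W)\longrightarrow \Ext^{j}_R(L,W)\longrightarrow \Ext^{j+1}_R(N,W).
\]
The left-hand term vanishes since $M \in \Ccal_{(i)}$ and $j>i$, while the right-hand term vanishes since $N \in \Ccal_{(i+1)}$ and $j+1 > i+1$. Squeezing shows $\Ext^{j}_R(L,W)=0$, hence $L \in \Ccal_{(i)}$.

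I do not anticipate any genuine obstacle: the proof is just a careful bookkeeping of indices in the long exact sequence, using that $\Ccal = \Perp{>0}\Wcal$ and $\Ccal_{(m)} = \Perp{>m}\Wcal$. The only subtlety worth highlighting in the write-up is why the hypothesis $i>0$ in (i) cannot be dropped, which is visible from the shape of the long exact sequence as noted above.
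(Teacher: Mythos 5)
Your proof is correct and follows essentially the same route as the paper: both arguments apply $\Hom_R(-,W)$ for $W \in \Wcal$ and read off the vanishing from the relevant three-term window of the long exact sequence of $\Ext$ groups. In fact your indexing is cleaner than the paper's, which in part (ii) displays the segment $\Ext^j_R(M,W)\to\Ext^{j+1}_R(L,W)\to\Ext^{j+1}_R(N,W)$ — a small index slip, since the long exact sequence actually reads $\Ext^j_R(M,W)\to\Ext^j_R(L,W)\to\Ext^{j+1}_R(N,W)$ exactly as you have written it.
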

   \begin{proof}
For (i), we must show that $\Ext^j_R(N, W)=0$ for every $j> i$ and $W \in \Wcal$. Then $\Ext^j_R( N, W)=0$ for every $j >i>0$ by the exact sequences
\[
\begin{tikzcd}[cramped, sep=small]
\Ext^{j}_R(L, W) \arrow[r] &\Ext^{j+1}_R(N, W) \arrow[r] &  \Ext^{j+1}_R(M, W). 
\end{tikzcd}
\]

For (ii) we must show that $\Ext^{j}_R(L, W)=0$ for every $j> i$ and $W \in \Wcal$. There is an exact sequence 
\begin{tikzcd}[cramped, sep=small]
\Ext^{j}_R(M, W) \arrow[r] &\Ext^{j+1}_R(L, W) \arrow[r] &  \Ext^{j+1}_R(N, W) , 
\end{tikzcd}
where the outer two modules vanish for all $j> i$ by assumption. 
\end{proof}

\subsection{Hereditary $\Tor$-pairs}\label{ss:hered-Tor-pairs} A hereditary $\Tor$-pair $(\Ecal, \Ccal)^\top$ induces a class $\Ecal^{\top_{> m}}$ for each integer $m>0$. The class $\Ecal^{\top_{> m}}$ forms the right-hand class of a hereditary $\Tor$-pair, as the associated $\Tor$-pair is generated by $\Upsilon_{m}(\Ecal)$.

  We will let $\Ccal_{(m)}$ denote the right hand class of the induced cotorsion pair, that is $\Ccal_{(m)} \coleq\Ecal^{\top_{> m}} = (\Upsilon_{m}(\Ecal))^\top$ and the hereditary $\Tor$-pair by $(\Ecal_{(m)}, \Ccal_{(m)})^\top$. In particular, $\Ccal_{(0)}= \Ccal \subseteq \Ccal_{(1)} \subseteq \cdots \subseteq \Ccal_{(i)} \subseteq \cdots $ and $\Ecal_{(0)}= \Ecal \supseteq \Ecal_{(1)} \supseteq \cdots \supseteq \Ecal_{(i)} \supseteq \cdots $. If moreover $\Ecal \subseteq \Fcal_n(R)$, so all the modules in the left-hand class have flat dimension at most $n$, then $(\Ecal_{(m)}, \Ccal_{(m)})^\top = (\Fcal_0(R), \lMod R)^\top$ for all $m\geq n$, and, $\Ecal_{(m)} \subseteq \Fcal_{n-m}(R)$ for $0\leq m\leq n$.

\begin{lemma}\label{L:Tor-pair-shifts}
Let $(\Ecal, \Ccal)^\top$ be a hereditary $\Tor$-pair and 
\begin{tikzcd}[cramped, sep=small]
0 \arrow[r] &L \arrow[r] &  M  \arrow[r] & N  \arrow[r] &0
\end{tikzcd}
be a short exact sequence of modules in $\lMod R$. Then the following hold. 
\begin{enumerate}
	\item[(i)] If $L \in \Ccal$ and $M \in \Ccal_{(i)}$ for $i>0$, then $N \in \Ccal_{(i)}$.
	\item[(ii)] If $M \in  \Ccal_{(i)}$ and $N \in \Ccal_{(i+1)}$ for $i\geq 0$, then $L \in \Ccal_{(i)}$.
\end{enumerate}
\end{lemma}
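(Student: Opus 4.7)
The plan is to mirror the proof of \cref{L:cotorsion-pair-shifts} essentially verbatim, replacing the long exact sequence of $\Ext$-groups with the long exact sequence of $\Tor$-groups attached to the short exact sequence $0 \to L \to M \to N \to 0$ after applying $\Tor^R_\bullet(E,-)$ for an arbitrary $E \in \Ecal$. The key homological input is the identification $\Ccal_{(m)} = \Ecal^{\top_{>m}}$ from \cref{ss:hered-Tor-pairs}, together with the fact that $\Ccal = \Ccal_{(0)}$ carries the full hereditary vanishing $\Tor^R_j(E,L) = 0$ for all $j \geq 1$ whenever $L \in \Ccal$.

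For part (i), I would fix $E \in \Ecal$ and $j > i$ and read off the three-term exact sequence
\[
\Tor^R_j(E,M) \longrightarrow \Tor^R_j(E,N) \longrightarrow \Tor^R_{j-1}(E,L).
\]
Since $M \in \Ccal_{(i)}$ and $j>i$, the left term vanishes. Since $i \geq 1$, we have $j - 1 \geq 1$, and because $L \in \Ccal$ lies in a hereditary $\Tor$-pair, the right term also vanishes. Thus $\Tor^R_j(E,N) = 0$ for every $j > i$, giving $N \in \Ccal_{(i)}$.

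For part (ii), I would fix $E \in \Ecal$ and $j > i$ and use the three-term exact sequence
\[
\Tor^R_{j+1}(E,N) \longrightarrow \Tor^R_j(E,L) \longrightarrow \Tor^R_j(E,M).
\]
The right term vanishes because $M \in \Ccal_{(i)}$ and $j > i$, and the left term vanishes because $N \in \Ccal_{(i+1)}$ and $j+1 > i+1$. Hence $\Tor^R_j(E,L) = 0$ for every $j > i$ and $E \in \Ecal$, so $L \in \Ccal_{(i)}$.

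I expect no genuine obstacle here: the argument is formally dual/parallel to \cref{L:cotorsion-pair-shifts}, and the only thing to be careful about is the indexing and, in (i), the use of the hypothesis $i > 0$ to guarantee $j - 1 \geq 1$ so that the Tor-vanishing for $L \in \Ccal$ actually applies. (If one instead only assumed $L \in \Ccal_{(0)}$ in the weaker sense $\Tor^R_{\geq 1}$-vanishing, this is automatic from heredity of the $\Tor$-pair.)
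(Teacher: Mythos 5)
Your proof is correct and is exactly the argument the paper intends when it says ``analogous to the proof of \cref{L:cotorsion-pair-shifts}'': you replace the contravariant $\Ext$ long exact sequence with the covariant $\Tor$ long exact sequence, and the index bookkeeping (including the role of $i>0$ to ensure $j-1\geq 1$ in part (i)) matches. Nothing to add.
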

\begin{proof}
Analogous to the proof of \cref{L:cotorsion-pair-shifts}.
\end{proof}

\subsection{Cotorsion pairs cogenerated by pure-injectives and colocalisation over commutative rings} 
Let $R$ be a commutative ring and $S$ a multiplicative subset of $R$, and $R_S$ the localisation of $R$ at $S$. Localisation over an arbitrary commutative ring preserves flat dimension, and interacts well with the $\Tor_R$-functor. However, in general, localisation does not preserve the injective dimension of a module. Instead,  the \newterm{colocalisation} of an $R$-module $M$, which is the module $\Hom_R(R_S, M)$, when $M$ is a pure-injective module has many desirable properties. We will denote the colocalisation of a class $\Xcal$ with respect to a multiplicative subset $S$ by  $\Xcal^S\coleq \{X^S \mid X \in \Xcal\}$, where $X^S = \Hom_R(R_S,X) \in \Mod{R_S}$.

More generally, let $R \to R'$ be any ring homomorphism. Then for an $R$-module $N$ we have following basic properties:
	\begin{itemize} 
		\item $\id_R(N) = n \implies \id_{R'}\Hom_R(R',N) \leq n$ provided that $\Ext_R^j(R',\Omega_{-i}N) = 0$ for all $i\geq0$ and all $j>0$.
		\item $N$ pure-injective $\implies$ $\Hom_R(R',N)$ pure-injective as an $R'$-module. 
        \item If $N$ is an $R'$-module and $\Tor^R_j(M, R')=0$ for every $j$, then $\Ext^j_R(M,N)\cong \Ext^j_{R'}(M\otimes_RR',N)$.
	\end{itemize}

Suppose $R$ is a commutative ring, $Y$ is any module, $F$ is a flat $R$-module and $N$ is a pure-injective module. Uniting the homological identity in \cref{ext-flat-pi} for pure-injective modules with the derived tensor-Hom adjunction \cref{ss:hom-formulae} we have the following isomorphisms. 

\begin{equation}\label{eq:the-eq-general}
\Hom_R(F, \Ext_R^i(M, N))  \cong\Ext_R^i(F \otimes_RM, N) \cong \Ext_R^i(M, \Hom_R(F, N))
\end{equation}

In particular, the equation \cref{eq:the-eq-general} shows that colocalisation of pure-injective modules commutes with the $\Ext$-functor in the second variable. Thus if $F$ is the localisation of the ring $R_S$, combining the above information we have the following isomorphisms for a pure-injective module $N$, $M$ any $R$-module, and $i \geq 0$. 

\begin{equation}\label{eq:the-eq}
\Ext_R^i(M, N)^S\cong\Ext_R^i(M_S, N) \cong  \Ext_R^i(M, N^S) \cong \Ext_{R_S}^i(M_S, N^S)
\end{equation}

    \begin{lemma}\label{cotorpair-localisation}
	Let $R$ be a commutative ring and $S$ a multiplicative subset of $R$. Consider a hereditary cotorsion pair $(\Ccal, \Wcal)$ cogenerated by a set $\Pcal$ of pure-injectives. Then there is a hereditary cotorsion pair $(\Ccal_S, \Vcal)$ in $\Mod R_S$ cogenerated by the set $\Pcal^S$ of pure-injectives, and moreover $\Ccal_S = \Ccal \cap \Mod R_S$.
	
	In addition $(\Ccal_S)_{(i)}$ coincides with $((\Ccal_{(i)})_S$ for $i \geq 0$.
	\end{lemma}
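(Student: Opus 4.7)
The plan is to use the adjunction identity (2.5) as the core technical tool, so I would first unpack its consequences. Since $\Pcal$ consists of pure-injective $R$-modules and colocalization of a pure-injective is again pure-injective (as noted in the basic properties preceding the lemma), $\Pcal^S$ is a set of pure-injective $R_S$-modules. For any $M\in\Mod{R_S}$ (so $M_S=M$) and any $P\in\Pcal$, the isomorphism (2.5) combined with the fact that $\Ext^i_R(M,P)$ already inherits an $R_S$-module structure from $M$ (so its colocalization is itself) yields the identity
\[
\Ext^i_{R_S}(M,P^S)\;\cong\;\Ext^i_R(M,P) \qquad (\star)
\]
for every $i\geq 0$. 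This is the workhorse of the proof.

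The main obstacle, and the step that crucially uses heredity of the original pair, is verifying that $\Perp{1}_{R_S}\Pcal^S=\Perp{}_{R_S}\Pcal^S$, so that the cotorsion pair in $\Mod{R_S}$ cogenerated by the set $\Pcal^S$ is automatically hereditary. I would argue this directly: if $M\in\Perp{1}_{R_S}\Pcal^S$, then $(\star)$ with $i=1$ gives $\Ext^1_R(M,P)=0$ for all $P\in\Pcal$, whence $M\in\Ccal$ by cogeneration; heredity of $(\Ccal,\Wcal)$ then forces $\Ext^i_R(M,P)=0$ for all $i>0$, and invoking $(\star)$ again upgrades this to $M\in\Perp{}_{R_S}\Pcal^S$. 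The same chain of equivalences simultaneously identifies this common orthogonal with $\Ccal\cap\Mod{R_S}$. Since $\Pcal^S$ is a set, the resulting cotorsion pair is complete, and by construction it is hereditary and cogenerated by $\Pcal^S$.

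To finish the first assertion, I would prove the two inclusions in $\Ccal_S=\Ccal\cap\Mod{R_S}$ separately. For $\subseteq$, take $C\in\Ccal$; the identity (2.4) with $F=R_S$ gives $\Ext^i_R(C_S,P)\cong\Hom_R(R_S,\Ext^i_R(C,P))=0$ because $\Ext^i_R(C,P)=0$ by heredity, so $C_S\in\Ccal\cap\Mod{R_S}$. The reverse inclusion is immediate since $M\in\Ccal\cap\Mod{R_S}$ satisfies $M=M_S\in\Ccal_S$.

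For the second assertion, I would apply the first part not to $(\Ccal,\Wcal)$ but to the hereditary cotorsion pair $(\Ccal_{(i)},\Wcal_{(i)})$, which by the discussion preceding Lemma 3.1 is itself cogenerated by the set $\Omega_{-i}\Pcal$ of pure-injectives. This immediately yields $(\Ccal_{(i)})_S=\Ccal_{(i)}\cap\Mod{R_S}$. On the other hand, applying $(\star)$ in all degrees $j>i$ gives $(\Ccal_S)_{(i)}=\Perp{>i}_{R_S}\Pcal^S=\Perp{>i}_R\Pcal\,\cap\,\Mod{R_S}=\Ccal_{(i)}\cap\Mod{R_S}$, and the two descriptions coincide.
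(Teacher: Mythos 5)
Your proof is correct and follows essentially the same route as the paper, centred on the colocalization isomorphisms (the paper's equation~\eqref{eq:the-eq}); in fact you are more careful than the printed proof, which is quite terse and does not spell out either the heredity of the induced pair in $\Mod{R_S}$ or the identification $(\Ccal_S)_{(i)}=(\Ccal_{(i)})_S$. The one place you diverge is the inclusion $\Ccal_S\subseteq\Ccal\cap\Mod{R_S}$: the paper deduces this in one line from the fact that $\Ccal$, being cogenerated by pure-injectives, is closed under direct limits (and $C_S=\varinjlim_{s\in S}C$), whereas you obtain it by a homological computation with $\Hom_R(R_S,-)$; both are valid, and the paper's route is slightly quicker. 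Your remark about completeness of the localized pair is correct but not needed for this statement.
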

	\begin{proof}
	First note that $\Ccal_S = \Ccal \cap \Mod R_S$ as $\Ccal$ is closed under direct limits. 
	
	Take $P \in \Mod R$ pure-injective and $M \in \Mod R$. Then for any $i>0$: $$\Ext_R^i(C_S,P) \cong \Ext^i_{R_S}(C_S, P^S) \cong \Ext^i_{R}(C,P^S)$$ by \cref{eq:the-eq}.  Then $M_S \in \Ccal_S$ if and only if $M_S \in \Perp{>0}\Pcal^S$.
	\end{proof}

\subsection{Localisations and $\Tor$-pairs}

Let $R$ be a commutative ring and $S$ a multiplicative subset of $R$, and $\Xcal$ a class in $\Mod R$. Then we let  $\Xcal_S\coleq \{X_S \mid X \in \Xcal\}$, a class in $\Mod R_S$.

\begin{lemma}\label{torpair-localisation}
Let $R$ be a commutative ring and $S$ a multiplicative subset of $R$. Consider a $\Tor$-pair $(\Ecal, \Ccal)^\top$. Then there is a $\Tor$-pair $(\Ecal_S, \Ccal_S)^\top$ in $\Mod R_S$, and moreover $\Ecal_S = \Ecal \cap \Mod R_S$ and $\Ccal_S = \Ccal \cap \Mod R_S$.

If in addition the $\Tor$-pair is hereditary, then $((\Ecal_S)_{(i)}, (\Ccal_S)_{(i)})^\top$ coincides with $((\Ecal_{(i)})_S, (\Ccal_{(i)})_S)^\top$ as $\Tor$-pairs in $\Mod R_S$ for $i \geq 0$.
\end{lemma}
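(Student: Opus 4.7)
The plan is to reduce everything to the change-of-rings isomorphism for the flat ring epimorphism $R \to R_S$: for any $R$-module $M$ and any $R_S$-module $N$, there is a natural identification $\Tor_j^R(M, N) \cong \Tor_j^{R_S}(M_S, N)$ in every degree $j \geq 0$. This is obtained by applying $-\otimes_R R_S$ to a projective resolution of $M$ over $R$ and using that $R_S$ is flat over $R$ together with $N \otimes_R R_S \cong N$.

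First I would verify the set-theoretic identifications. Since $M_S$ is a direct limit of copies of $M$ indexed by multiplication by the elements of $S$, and both constituents of a $\Tor$-pair are closed under direct limits (\S\ref{ss:Tor-pairs}), we obtain $\Ecal_S \subseteq \Ecal$ and $\Ccal_S \subseteq \Ccal$. Conversely, any $M$ already lying in $\Mod R_S$ satisfies $M \cong M_S$, and therefore the identities $\Ecal_S = \Ecal \cap \Mod R_S$ and $\Ccal_S = \Ccal \cap \Mod R_S$ follow.

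Next I would establish the orthogonality in $\Mod R_S$. Given $E \in \Ecal$ and $N \in \Mod R_S$, the change-of-rings isomorphism gives $\Tor_1^{R_S}(E_S, N) \cong \Tor_1^R(E, N)$. Hence $N$ lies in $\Ecal_S^{\top_1}$ (computed in $\Mod R_S$) exactly when $\Tor_1^R(E, N) = 0$ for every $E \in \Ecal$, which together with $N \in \Mod R_S$ is the same as saying $N \in \Ccal_S$, by the previous paragraph. This shows $\Ecal_S^{\top_1} = \Ccal_S$, and the symmetric argument (applying the same formula to $E_S$ on the other side) yields ${}^{\top_1}\Ccal_S = \Ecal_S$.

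For the hereditary part, I would apply the first statement to each of the hereditary $\Tor$-pairs $(\Ecal_{(i)}, \Ccal_{(i)})^\top$ from \S\ref{ss:hered-Tor-pairs}: this produces $((\Ecal_{(i)})_S, (\Ccal_{(i)})_S)^\top$ as a $\Tor$-pair in $\Mod R_S$ whose right-hand class is $\Ccal_{(i)} \cap \Mod R_S$. Using the change-of-rings isomorphism in all positive degrees, one checks that $(\Ccal_S)_{(i)} = \Ecal_S^{\top_{>i}}$ also equals $\Ccal_{(i)} \cap \Mod R_S$, since for $N \in \Mod R_S$ the vanishing of $\Tor_j^{R_S}(E_S, N)$ for all $E \in \Ecal$ and $j > i$ is equivalent to the vanishing of $\Tor_j^R(E, N)$. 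Because a $\Tor$-pair is determined by either of its classes, the two $\Tor$-pairs coincide. The whole argument is essentially bookkeeping; the only genuine input is the change-of-rings isomorphism, so I do not anticipate a serious obstacle.
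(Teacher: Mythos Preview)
Your proposal is correct and follows essentially the same approach as the paper: both use closure under direct limits to identify $\Ecal_S = \Ecal \cap \Mod R_S$ and $\Ccal_S = \Ccal \cap \Mod R_S$, and then the change-of-rings isomorphism for the flat ring epimorphism $R \to R_S$ to verify orthogonality. Your treatment of the hereditary statement is in fact more explicit than the paper's, which leaves that part to the reader.
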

\begin{proof}
First note that $\Ecal_S = \Ecal \cap \Mod R_S$ and $\Ccal_S = \Ccal \cap \Mod R_S$ as classes in a $\Tor$-pair are always closed under direct limits. 

Take $M \in \Mod R_S$ and $E_S \in \Ecal_S$. Then $\Tor_1^{R_S}(E_S, M) \cong \Tor_1^{R}(E_S,M)$ as $R \to R_S$ is a flat ring morphism.  The latter module vanishes if and only if $M \in \Ccal$, so the right $\Tor$-orthogonal of $\Ecal_S$ in $\Mod R_S$ is exactly $\Ccal_S$, and a symmetric argument shows that the left $\Tor$-orthogonal of $\Ccal_S$ in $\Mod R_S$. 
\end{proof}


\subsection{Pure-injective modules over commutative noetherian rings}

When $R$ is a commutative noetherian ring, it is known that every flat cotorsion module is pure-injective. Moreover, by work of Enochs \cite{Eno84}, or \cite[Section 5.3, Theorem 5.3.28]{EJ11}, every flat cotorsion module is isomorphic to a product $\prod_{\pp \in \Spec R} T_\pp$, where $T_\pp$ is the $\pp R_\pp$-completion of a free $R_\pp$-module.

\begin{lemma}\label{R_m-gen-cot}
    Let $R$ be a commutative noetherian ring and $C \in \Mod R$ a cotorsion module. If $\Hom_R(R_\mm, C)=0$ for every maximal ideal $\mm \in \Spec R$, then $C =0$. 
\end{lemma}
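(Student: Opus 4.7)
My plan is to pass to the flat cover of $C$ and then invoke the Enochs structure theorem for flat cotorsion modules. First I would take a flat cover $\pi \colon F \twoheadrightarrow C$, which exists by \cite{BEE01}; set $N = \Ker \pi$. By the completeness of the cotorsion pair $(\Fcal_0, \Fcal_0^\perp)$ together with Wakamatsu's lemma, $N$ is cotorsion, and then from the short exact sequence $0 \to N \to F \to C \to 0$ we deduce that $F$ is also cotorsion. Hence $F$ is flat cotorsion, and the Enochs structure theorem recalled just before the lemma furnishes a decomposition $F \cong \prod_{\pp \in \Spec R} T_\pp$, where each $T_\pp$ is the $\pp R_\pp$-adic completion of a free $R_\pp$-module. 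It therefore suffices to prove $F = 0$.

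Next I would carry out the key calculation of $\Hom_R(R_\mm, T_\pp)$ for a maximal ideal $\mm$ and a prime $\pp$. If $\pp \subseteq \mm$, then $R \setminus \mm \subseteq R \setminus \pp$ promotes $T_\pp$ to an $R_\mm$-module extending its $R_\pp$-structure, so evaluation at $1$ identifies $\Hom_R(R_\mm, T_\pp)$ with $T_\pp$ (using that $R \to R_\mm$ is a ring epimorphism). If $\pp \not\subseteq \mm$, pick $s \in \pp \setminus \mm$; then for any $R$-linear $f \colon R_\mm \to T_\pp$ and any $x \in R_\mm$ one has $f(x) = s^n \cdot f(x/s^n) \in s^n T_\pp$ for every $n \geq 0$, so the image of $f$ lies in $\bigcap_n s^n T_\pp \subseteq \bigcap_n \pp^n T_\pp = 0$ by the $\pp$-adic separation of the completion, forcing $f = 0$. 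Since $\Hom_R(R_\mm, -)$ commutes with products in its second argument, the two cases combine to give an isomorphism $\Hom_R(R_\mm, F) \cong \prod_{\pp \subseteq \mm} T_\pp$, identified inside $F$ as the submodule of tuples supported on $\{\pp \in \Spec R : \pp \subseteq \mm\}$.

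Applying $\Hom_R(R_\mm, -)$ to the sequence $0 \to N \to F \to C \to 0$, the hypothesis $\Hom_R(R_\mm, C) = 0$ together with $\Ext^1_R(R_\mm, N) = 0$ (cotorsion of $N$, flatness of $R_\mm$) yields that the evaluation map $\Hom_R(R_\mm, F) \to F$ lands inside $N$. Consequently $\prod_{\pp \subseteq \mm} T_\pp \subseteq N$ as a submodule of $F$, for every maximal $\mm$; since each prime of $R$ is contained in some maximal ideal, this forces $T_\pp \subseteq N$ for every $\pp \in \Spec R$.

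To conclude, I would exploit the minimality property characterising a flat cover. For a fixed $\pp$, write $F = T_\pp \oplus F'$ with $F' = \prod_{\pp' \neq \pp} T_{\pp'}$, and let $\varphi \colon F \to F$ be the idempotent endomorphism projecting away the $T_\pp$-summand. Since $T_\pp \subseteq N = \Ker \pi$, the identity $\pi \varphi = \pi$ holds, so by the defining property of the flat cover $\varphi$ must be an automorphism; but $\Ker \varphi = T_\pp$, and so $T_\pp = 0$. Running this over every $\pp$ gives $F = 0$, hence $C = 0$. I expect the main difficulty to lie in the vanishing $\Hom_R(R_\mm, T_\pp) = 0$ when $\pp \not\subseteq \mm$, which requires simultaneously exploiting the existence of an element $s \in \pp \setminus \mm$ and the $\pp$-adic separation of the completion, uniformly across all values of $f$ rather than merely at $1 \in R_\mm$.
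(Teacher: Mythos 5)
Your proof is correct, and it takes a genuinely different route from the paper's, although both pass through the flat cover $F\cong\prod_{\pp}T_\pp$ of $C$ and the Enochs structure theorem. The paper's argument is more global: it observes that $\Hom_R(T_\pp,C)=0$ for every $\pp$ (because $T_\pp$ is an $R_\mm$-module for some maximal $\mm$), deduces $\Hom_R(\bigoplus_\pp T_\pp,C)=0$, and then exploits the pure exact sequence $0\to\bigoplus_\pp T_\pp\to\prod_\pp T_\pp\to\Coker\to0$: since $\Coker$ is flat, the factorization of $\phi$ through it lifts back through the flat precover, and minimality forces the projection $\prod\to\Coker$ to be an isomorphism, killing the whole coproduct at once. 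You instead compute the evaluation image of $\Hom_R(R_\mm,F)\to F$ explicitly, conclude $T_\pp\subseteq\Ker\phi$ for each $\pp$, and then delete one summand at a time via the idempotent $\varphi$ and cover minimality. One small remark: your hardest computation, the vanishing $\Hom_R(R_\mm,T_\pp)=0$ when $\pp\not\subseteq\mm$ via $\bigcap_n s^nT_\pp\subseteq\bigcap_n\pp^nT_\pp=0$, is in fact not needed. To get $T_\pp\subseteq N$ you only need, for some maximal $\mm\supseteq\pp$, that the evaluation image of $\Hom_R(R_\mm,F)\to F$ both lands in $N$ (which follows from $\Hom_R(R_\mm,C)=0$ alone) and contains $T_\pp$ (which follows from $\Hom_R(R_\mm,T_\pp)\cong T_\pp$, the $\pp\subseteq\mm$ case); knowing the precise value of $\Hom_R(R_\mm,F)$, i.e. that the other factors contribute nothing, is superfluous. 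With that trimmed, your proof is slightly shorter than the paper's and avoids invoking purity of the coproduct-to-product map, at the cost of quantifying over primes $\pp$ one at a time rather than handling them simultaneously.
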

\begin{proof}
    Let $F \overset{\phi}\to C \to 0$ be a flat cover of $C$. Using \cite[Theorem 5.3.28]{EJ11}, we can explicitly write $F$ as $\prod_{\pp \in \Spec R} T_\pp$ as described above.
    
    First note that if $\mm$ is a maximal ideal of $R$ and $\Hom_R(R_\mm, C)=0$, then necessarily $\Hom_R(M_\mm, C)=0$ for every $R_\mm$-module $M_\mm$. Therefore, as $T_\pp$ is an $R_\mm$-module for every maximal ideal $\mm$ containing $\pp$, $\Hom_R(T_\pp, C)=0$ for every $\pp \in \Spec R$. As the contravariant $\Hom$-functor commutes with coproducts, $\Hom_R(\bigoplus_{\pp \in \Spec R} T_\pp, C) \cong \prod_{\pp \in \Spec R}\Hom_R( T_\pp, C)=0$. Let 
    \[
        \begin{tikzcd}
           0 \arrow[r] &  \displaystyle{\bigoplus_{\pp \in \Spec R} T_\pp} \ar[r, "\tau"] &
           \displaystyle{\prod_{\pp \in \Spec R} T_\pp} \ar[r, "\pi"] &
           \Coker (\tau) \ar[r] &
           0
        \end{tikzcd}
    \]
    denote the short exact sequence where $\tau$ is the natural inclusion of a coproduct into a product. As $\phi\tau =0$, $\phi$ factors through $\Coker (\tau)$, that is, there exists a homomorphism $\beta \colon \Coker( \tau) \to C$ such that $\beta \pi = \phi$. 
    
    As $\tau$ is a pure monomorphism, it follows that $\Coker (\tau)$ is a flat $R$-module. Thus by the flat-precover property of $\phi$, there exists a morphism $\alpha \colon \Coker (\tau) \to \prod_{\pp \in \Spec R} T_\pp$ such that $\phi \alpha = \beta$. Therefore, $\phi = \beta \pi = \phi \alpha \pi$, and by the minimality property of the flat cover $\phi$, $\alpha \pi$ is an automorphism. We conclude that $\pi$ is an isomorphism, and therefore $F \cong \prod_{\pp \in \Spec R} T_\pp=0$, so also $C=0$, as desired.  
\end{proof}

\begin{prop}\label{coloc-orth-pi}
	 Let $R$ be a commutative noetherian ring. Let $N$ be a pure-injective module and $i\geq0$ an integer. Then, for an $R$-module $M$, $\Ext^i_R(M, N)=0$ if and only if $\Ext^i_R(M, N^\mm)=0$ for every maximal ideal $\mm \in \Spec R$. 
\end{prop}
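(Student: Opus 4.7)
The plan is to reduce both directions to Lemma \ref{R_m-gen-cot}. Set $X \coleq \Ext^i_R(M, N)$. Applying \eqref{eq:the-eq} with $S = R \setminus \mm$ yields the natural identification
\[\Ext^i_R(M, N^\mm) \cong \Hom_R(R_\mm, X),\]
which makes the forward direction immediate. For the converse, the hypothesis becomes $\Hom_R(R_\mm, X) = 0$ for every maximal ideal $\mm$, and Lemma \ref{R_m-gen-cot} would then yield $X = 0$, provided we can establish that $X$ is a cotorsion $R$-module.

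The main obstacle is therefore showing that $\Ext^i_R(M, N)$ is cotorsion whenever $N$ is pure-injective. I plan to do this by fixing an arbitrary flat $F$, presenting it as $0 \to K \to F_0 \to F \to 0$ with $F_0$ free (so $K$ is also flat), and aiming to conclude $\Ext^1_R(F, X) = 0$. Applying $\Hom_R(-, X)$ yields a 4-term exact sequence, thanks to $\Ext^1_R(F_0, X) = 0$. Simultaneously, tensoring the presentation with $M$ preserves exactness (since $F$ is flat), and then applying $\Ext^\bullet_R(-, N)$ produces a long exact sequence. Using Lemma \ref{ext-flat-pi} at each of the flat modules $F, F_0, K$, the first five terms of this long exact sequence identify with $\Hom_R(F, X)$, $\Hom_R(F_0, X)$, $\Hom_R(K, X)$, $\Hom_R(F, \Ext^{i+1}_R(M, N))$, and $\Hom_R(F_0, \Ext^{i+1}_R(M, N))$.

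Comparing the 4-term sequence with this identified portion of the long exact sequence, $\Ext^1_R(F, X)$ identifies with the kernel of the map
\[\Hom_R(F, \Ext^{i+1}_R(M, N)) \to \Hom_R(F_0, \Ext^{i+1}_R(M, N))\]
induced by the epimorphism $F_0 \twoheadrightarrow F$. This kernel is zero since surjectivity of $F_0 \twoheadrightarrow F$ forces injectivity of the induced map on $\Hom$, so $X$ is cotorsion and Lemma \ref{R_m-gen-cot} finishes the argument. The delicate point is the naturality of the identifications supplied by Lemma \ref{ext-flat-pi}, which ensures that the two sequences really do agree on their shared terms (alternatively, one could short-circuit this by invoking the collapse of the hyperext spectral sequence $E_2^{p,q} = \Ext^p_R(F, \Ext^q_R(M, N)) \Rightarrow \Ext^{p+q}_R(F \otimes_R M, N)$ forced by Lemma \ref{ext-flat-pi}).
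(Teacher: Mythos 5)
Your reduction is exactly the one the paper uses: translate via \eqref{eq:the-eq} to the statement that $X^\mm = 0$ for all maximal $\mm$ forces $X = 0$, and invoke \cref{R_m-gen-cot} once $X$ is known to be cotorsion. Where you part ways with the paper is the justification that $X = \Ext^i_R(M,N)$ is cotorsion: the paper simply cites a result of Warfield (\cite[Proposition 7]{War69}) that $\Ext^i_R(M,N)$ is in fact pure-injective whenever $N$ is, which is stronger than cotorsion and makes the step a one-liner. You instead give a hands-on argument via a flat presentation of an arbitrary flat $F$ and the long exact sequence for $\Ext^\bullet(-,N)$ applied to $0 \to K\otimes M \to F_0\otimes M \to F\otimes M \to 0$, identified termwise using \cref{ext-flat-pi}. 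That argument is sound, with the caveat you yourself flag: you need \cref{ext-flat-pi} to be natural in the flat variable along $K \to F_0 \to F$, which does hold but is not literally asserted in the lemma statement. (A cleaner self-contained route to Warfield's fact, if you want one: $N$ pure-injective means $N$ is a direct summand of $N^{++}$, so $\Ext^i(M,N)$ is a direct summand of $\Ext^i(M,N^{++}) \cong \Tor_i^R(M,N^+)^+$, which is a character dual and hence pure-injective.)

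One small warning about your parenthetical ``short-circuit'': knowing that the abutment of the spectral sequence $E_2^{p,q}=\Ext^p_R(F,\Ext^q_R(M,N)) \Rightarrow \Ext^{p+q}_R(F\otimes_R M,N)$ agrees with the $p=0$ column via \cref{ext-flat-pi} does \emph{not} by itself force the $E_2$-page to be concentrated in that column. At best, if the isomorphism is the edge map, you get that the higher $E_\infty$ terms vanish, but $E_\infty^{p,q}$ for $p>0$ is only a subquotient of $E_2^{p,q}$, so $E_2^{1,q}=\Ext^1_R(F,\Ext^q_R(M,N))$ need not vanish. In other words, the collapse you want \emph{is} the cotorsion statement, not a consequence of the lemma; that remark should be dropped. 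Your main argument, though, does not rely on it and goes through.
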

\begin{proof}
	Let $N$ be a pure-injective module. By \cref{eq:the-eq}, there is an isomorphism

 \[
\Ext_R^i(M,N^\mm) \cong \Ext_R^i(M,N)^\mm.
 \]
 
 Thus it is sufficient to show that $\Ext_R^i(M,N)^\mm$ vanishes for every maximal ideal $\mm \in \Spec R$ if and only if $\Ext_R^i(M,N)$ vanishes.
 
The if statement is trivial. For the only if statement, recall that $\Ext_R^i(M,N)$ is a pure-injective $R$-module by \cite[Proposition 7]{War69}, and apply \cref{R_m-gen-cot}.
 \end{proof}
We have the following corollary, the special case of $N$ being a cotilting module appears in \cite[Theorem 2.4]{TS14}.
\begin{cor}\label{c:loc-cotor-pair}
Let $R$ be a commutative noetherian ring. Let $N$ be a pure-injective module. Then, ${}^\perp N = {}^\perp (\prod_{\mm} N^\mm)$. Moreover, ${}^\perp N$ is a definable class in $\Mod R$ then ${}^\perp (N^\mm)$ is a definable class in $\Mod R_\mm$ for each maximal ideal $\mm$. 
\end{cor}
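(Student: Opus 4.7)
The first equality will drop out directly from the degree-by-degree statement of \cref{coloc-orth-pi}. Indeed, for $M \in \Mod R$, we have $M \in {}^\perp N$ precisely when $\Ext_R^j(M,N) = 0$ for every $j \geq 1$, and by \cref{coloc-orth-pi} this is equivalent to the simultaneous vanishing of $\Ext_R^j(M, N^\mm)$ for all $j \geq 1$ and all maximal ideals $\mm$. Since $\Ext_R^j(M,-)$ commutes with arbitrary products, we may assemble this into the single vanishing condition $\Ext_R^j(M, \prod_\mm N^\mm) = 0$ for all $j \geq 1$, i.e. $M \in {}^\perp(\prod_\mm N^\mm)$. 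Note that $\prod_\mm N^\mm$ is still pure-injective as a product of pure-injectives, and the maximal spectrum is a set, so the class $\prod_\mm N^\mm$ is well-defined.

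For the second statement, the plan is to identify ${}^\perp(N^\mm)$ inside $\Mod R_\mm$ with ${}^\perp N \cap \Mod R_\mm$, viewing $\Mod R_\mm$ as a full subcategory of $\Mod R$ via the flat ring epimorphism $R \to R_\mm$. For $M \in \Mod R_\mm$ we have $M_\mm = M$, so the chain of isomorphisms in \cref{eq:the-eq} specializes to
\[
\Ext_R^i(M,N) \;\cong\; \Ext_R^i(M_\mm,N) \;\cong\; \Ext_R^i(M, N^\mm) \;\cong\; \Ext_{R_\mm}^i(M, N^\mm)
\]
for every $i \geq 0$, which shows that the two classes really do coincide.

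Given this identification, the three closure properties defining definability transfer from ${}^\perp N \subseteq \Mod R$ to ${}^\perp(N^\mm) \subseteq \Mod R_\mm$. Direct limits and products in $\Mod R_\mm$ agree (as underlying abelian groups, indeed as $R$-modules with the induced $R_\mm$-action) with the corresponding limits in $\Mod R$, so closure under these in ${}^\perp N$ immediately gives closure in ${}^\perp(N^\mm)$. The one point that needs a brief remark is purity: since $R \to R_\mm$ is a ring epimorphism, any two $R_\mm$-modules satisfy $X \otimes_R Y \cong X \otimes_{R_\mm} Y$, so a short exact sequence of $R_\mm$-modules is pure in $\Mod R_\mm$ if and only if it is pure in $\Mod R$. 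Hence closure of ${}^\perp N$ under pure submodules in $\Mod R$ yields closure of ${}^\perp(N^\mm)$ under pure submodules in $\Mod R_\mm$.

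The argument is essentially a bookkeeping exercise; the only minor subtlety is keeping the two-variable behaviour of \cref{eq:the-eq} straight and confirming that purity does not shift when restricting scalars along the flat epimorphism $R \to R_\mm$. Both are standard, so no serious obstacle is anticipated.
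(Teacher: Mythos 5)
Your proof is correct and takes essentially the same approach as the paper's: the first equality is \cref{coloc-orth-pi} combined with the fact that $\Ext$ turns products in the second variable into products, and the second statement follows by identifying ${}^\perp(N^\mm)$ with ${}^\perp N \cap \Mod R_\mm$ via \cref{eq:the-eq} (which is the content of \cref{cotorpair-localisation}, invoked directly in the paper's proof). You spell out the transfer of the definability closure properties, in particular the purity-preservation across the flat epimorphism $R \to R_\mm$, more explicitly than the paper does, but this is a verification of the same argument rather than a different route.
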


\begin{proof}
The first statement follows directly from \cref{coloc-orth-pi}. The moreover clause follows from the same identities in the proof of \cref{coloc-orth-pi} and because ${}^\perp N^\mm = ({}^\perp N)_\mm$ by \cref{cotorpair-localisation}. Explicitly, $\Ext_R^i(M,N)$ vanishes if and only if $\Ext_R^i(M,N)^\mm$ vanishes for every maximal ideal $\mm$ if and only if $\Ext_{R_\mm}^i(M_\mm,N)^\mm$ for every maximal ideal $\mm$.
\end{proof}

\section{The main classification}\label{s:hered-tor-pair}

Throughout this section, $R$ will denote a commutative noetherian ring. Recall $\Ical$ the class of all $R$-module of finite injective dimension and $\Pcal\Ical_0$ is the class of all pure-injective $R$-modules. The goal of this section is to prove our main classification result \cref{T:Torpair-characterisation}. We start by showing that the left constituents of the hereditary cotorsion pairs cogenerated by modules from $\Ical \cap \Pcal\Ical_0$ are closed under taking injective envelopes. 

\begin{lemma}
Let $R$ be a local ring with maximal ideal $\mm$, and take a short exact sequence $ \begin{tikzcd}[cramped, sep=small]
0 \arrow[r] &L \arrow[r] &  M  \arrow[r] &N  \arrow[r] &0,
\end{tikzcd}
$
such that $L$ is $\mm$-torsion, or equivalently, $L$ is filtered by copies of $R/\mm$. Then $\Ass M = \Ass L \cup \Ass N$ 
\end{lemma}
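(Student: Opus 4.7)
The plan is to establish $\Ass M = \Ass L \cup \Ass N$ by reducing to a single nontrivial inclusion. The inclusion $\Ass M \subseteq \Ass L \cup \Ass N$ is standard for any short exact sequence: given $\pp \in \Ass M$ with cyclic witness $Rx \cong R/\pp$, either $Rx \cap L \neq 0$ (yielding $\pp \in \Ass L$) or $Rx$ embeds into $N$ (yielding $\pp \in \Ass N$). Similarly, $\Ass L \subseteq \Ass M$ always holds since $L \hookrightarrow M$. So the content of the lemma is the inclusion $\Ass N \subseteq \Ass M$, which is where the $\mm$-torsion hypothesis on $L$ enters.

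For this inclusion I would split on whether $\pp \in \Ass N$ equals $\mm$. If $\pp \neq \mm$, localize the sequence at $\pp$. The key observation is that $L_\pp = 0$: any $l \in L$ is annihilated by some power $\mm^k$, and picking $s \in \mm \setminus \pp$ (which exists since $\pp \subsetneq \mm$) one has $s^k \in \mm^k$, so $s^k l = 0$ and thus $l/1 = 0$ in $L_\pp$. Hence $M_\pp \cong N_\pp$, and since $\pp \in \Ass N$ iff $\pp R_\pp \in \Ass{N_\pp}$ as an $R_\pp$-module, we conclude $\pp \in \Ass M$.

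The remaining case $\pp = \mm \in \Ass N$ cannot be addressed by localization and requires a direct construction. I would pick $n \in N$ with $\ann{n} = \mm$ and lift it to $m \in M$; then $m \neq 0$ and $\mm \cdot m \subseteq L$. Since $R$ is noetherian, $\mm m$ is finitely generated, hence as a submodule of the $\mm$-torsion module $L$ it is killed by some uniform $\mm^N$, giving $\mm^{N+1} m = 0$. Taking the minimal $k \geq 0$ with $\mm^{k+1} m = 0$, either $k = 0$ and $\ann{m} = \mm$ directly, or $k > 0$ in which case any nonzero $x \in \mm^k m$ satisfies $\mm x = 0$, so $\ann{x} = \mm$ by maximality of $\mm$. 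In either sub-case $\mm \in \Ass M$.

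The main delicate step is the case $\pp = \mm$, where the localization trick becomes vacuous. Noetherianity of $R$ is invoked here precisely to pass from pointwise $\mm$-torsion of $L$ to a uniform annihilator power on the finitely generated submodule $\mm m$; without this one could not extract a single element of $M$ witnessing $\mm \in \Ass M$. Everything else reduces either to a standard fact about associated primes or to a clean one-line localization argument.
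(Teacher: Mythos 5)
Your proof is correct, and the localization argument for $\pp \in \Ass N \setminus \{\mm\}$ is the same as the paper's. Where you diverge is the case $\pp = \mm$: you construct an explicit witness in $M$ by lifting a witness $n \in N$ with $\ann(n) = \mm$, then use noetherianity to force a uniform annihilator power on the finitely generated submodule $\mm m$ of the $\mm$-torsion module $L$, finally extracting a socle element. This works, but the paper avoids the case entirely: it notes that if $L = 0$ the statement is trivial, and if $L \neq 0$ then $L$ is a nonzero module with $\supp L \subseteq \{\mm\}$, so (by noetherianity) $\Ass L = \{\mm\}$ and hence $\mm \in \Ass L \subseteq \Ass M$ automatically, regardless of whether $\mm \in \Ass N$. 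Thus the paper only needs the localization argument and never touches the delicate $\pp = \mm$ construction. Your argument is more explicit and self-contained; the paper's is slicker because it exploits that the $\mm$-torsion hypothesis already pins down $\Ass L$ exactly.
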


\begin{proof}
Suppose that $L \neq 0$. It is sufficient to show that if $\pp \in \Ass N \setminus \{\mm\}$, then $\pp \in \Ass M$ for $\pp \in \Spec R$. Fix $\pp \in \Ass N$ and localise the short exact sequence at $\pp$. Then $L_\pp = 0$ as $(R/\mm)_\pp=0$, so as the sequence remains exact $M_\pp \cong N_\pp$, so $\pp R_\pp \in \Ass {N_\pp}$ implies that $\pp \in \Ass M$, as required.\end{proof}

\begin{lemma}
Let $(\Ccal, \Wcal)$ be a hereditary cotorsion pair cogenerated by a class $\Pcal \subseteq \Pcal\Ical_0$. If $k(\pp) \in \Ccal$ for some $\pp \in \Spec R$, then the modules $E(k(\pp)) , \Omega_{-1}(k(\pp)) \in \Ccal$. More generally, for an $R$-module $M$, if $\{k(\pp) \mid \pp \in \Ass M\} \subseteq \Ccal$, then $E(M) \in \Ccal$. 
\end{lemma}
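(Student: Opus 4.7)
The plan is to realize $E(k(\pp))$ and $\Omega_{-1}(k(\pp))$ as $\Ccal$-filtered modules in the sense of Eklof's lemma (which applies to any cotorsion pair since $\Ccal$ is closed under transfinite extensions), and then obtain the general statement from the Matlis decomposition of injective modules over a commutative noetherian ring.

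First I would reduce to working over the localisation $R_\pp$. Since $k(\pp)$ is annihilated by every element outside $\pp$, the module $E \coleq E(k(\pp))$ is naturally an $R_\pp$-module and coincides with the Matlis injective hull $E_{R_\pp}(k(\pp))$; in particular, $E$ is an Artinian $R_\pp$-module. Setting
\[
E_n \coleq \{x \in E : (\pp R_\pp)^n x = 0\}, \qquad n \geq 0,
\]
I get an ascending chain of $R$-submodules with $E_0 = 0$ and $E_1 = k(\pp)$. Because $R_\pp$ is noetherian and $E$ is Artinian, every cyclic $R_\pp$-submodule of $E$ has finite length and is therefore annihilated by some power of $\pp R_\pp$; hence $E = \bigcup_{n \geq 0} E_n$. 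The successive quotients $E_{n+1}/E_n$ are annihilated by $\pp R_\pp$, so they are $k(\pp)$-vector spaces, i.e. direct sums of copies of $k(\pp)$ as $R$-modules.

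Now, by hypothesis $k(\pp) \in \Ccal$, and since $\Ccal$ is closed under arbitrary direct sums, each quotient $E_{n+1}/E_n$ lies in $\Ccal$. Eklof's lemma then gives $E(k(\pp)) \in \Ccal$. For $\Omega_{-1}(k(\pp)) = E/E_1$, the induced filtration $\{E_{n+1}/E_1\}_{n \geq 0}$ has the same kind of $k(\pp)$-vector space quotients (shifted by one), so another application of Eklof's lemma yields $\Omega_{-1}(k(\pp)) \in \Ccal$.

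Finally, for the general statement, I would use the structure theorem for injective modules over a commutative noetherian ring to write
\[
E(M) = \bigoplus_{\pp \in \Ass M} E(k(\pp))^{(I_\pp)}
\]
for suitable index sets $I_\pp$. The hypothesis $\{k(\pp) \mid \pp \in \Ass M\} \subseteq \Ccal$ together with the first part gives each $E(k(\pp))$ with $\pp \in \Ass M$ in $\Ccal$, and closure of $\Ccal$ under direct sums produces $E(M) \in \Ccal$. The only subtle point in the argument is the identification of $E(k(\pp))$ with the Matlis hull over $R_\pp$ and the verification that the $\pp$-power-socle filtration exhausts it; the rest is bookkeeping around Eklof's lemma.
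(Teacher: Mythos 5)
Your proposal is correct and follows essentially the same route as the paper, which invokes Matlis's result that $E(k(\pp))$ and its cosyzygy are filtered by coproducts of $k(\pp)$ and then closes with direct-limit closure of $\Ccal$; you simply make the $\pp$-power socle filtration explicit and close with Eklof's lemma (transfinite extension closure) rather than direct limits, but both closure properties hold here and the underlying idea is identical.
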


\begin{proof}
The first statement holds since $E(k(\pp))$ and $\Omega_{-1}(k(\pp)$ are filtered by direct sums copies of the module $k(\pp)$, and since $\Ccal$ is closed under direct limits. The second statement holds since for a module $M$, $E(M) \cong \bigoplus_{\pp \in \Ass M} E(k(\pp))^{(\alpha_\pp)}$ for cardinals $\alpha_\pp$. So $E(M)\in \Ccal$ by the first statement. 
\end{proof}

\begin{lemma}\label{l:associated-residue}
Let $(\Ccal, \Wcal)$ be a hereditary cotorsion pair cogenerated by a class $\Pcal \subseteq \Ical \cap \Pcal\Ical_0$. For any $M \in \Ccal$, if $\pp \in \Ass M$, then $k(\pp) \in \Ccal$. 
\end{lemma}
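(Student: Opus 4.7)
The plan is to reduce the statement to the local case at $\pp$ and then argue by induction on the injective dimension of modules in the cogenerating class $\Pcal$.

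\emph{Reduction to the local case.} By \cref{cotorpair-localisation}, the localization $(\Ccal_\pp, \Wcal_\pp)$ is a hereditary cotorsion pair on $\Mod R_\pp$ cogenerated by the colocalization $\Pcal^\pp$. For any pure-injective $N$, \cref{ext-flat-pi} with $Y = R$ gives $\Ext^j_R(R_\pp, N) \cong \Hom_R(R_\pp, \Ext^j_R(R,N)) = 0$ for $j \geq 1$, and the same holds for every cosyzygy of $N$ (itself pure-injective by \cite[Lemma 6.20]{GT12}). By the properties in \cref{ss:colocalisation}, this gives $\id_{R_\pp} \Hom_R(R_\pp, N) \leq \id_R N$, so $\Pcal^\pp \subseteq \Ical \cap \Pcal\Ical_0$ over $R_\pp$. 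Since $M_\pp \in \Ccal_\pp$, $\pp R_\pp \in \Ass_{R_\pp}(M_\pp)$, and $k(\pp) \in \Ccal$ is equivalent to the $R_\pp$-module $k(\pp)$ belonging to $\Ccal_\pp$, we may assume from the outset that $R$ is local with maximal ideal $\mm$, $M \in \Ccal$, $\mm \in \Ass M$, and we aim to prove $k \coleq R/\mm \in \Ccal$.

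\emph{Inductive setup.} Fix $N \in \Pcal$ with $n = \id_R N$. I would prove by induction on $n$ that $\Ext^i_R(k, N) = 0$ for all $i \geq 1$. The case $n=0$ is trivial since $N$ is injective. For $n \geq 1$, consider the short exact sequence $0 \to N \to E(N) \to N' \to 0$, where $N' = E(N)/N$ is pure-injective by \cite[Lemma 6.20]{GT12} and has $\id_R N' \leq n-1$. The long exact sequence for $\Hom(M, -)$ gives $\Ext^i(M, N') \cong \Ext^{i+1}(M, N) = 0$ for $i \geq 1$, so $M$ belongs to the left-hand class of the hereditary cotorsion pair cogenerated by $\Omega^{-1}\Pcal$, which still consists of pure-injectives of strictly smaller injective dimension. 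By the inductive hypothesis, $\Ext^i(k, N') = 0$ for $i \geq 1$, and the long exact sequence for $\Hom(k, -)$ yields $\Ext^i(k, N) \cong \Ext^{i-1}(k, N') = 0$ for $i \geq 2$.

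\emph{The step $\Ext^1(k, N) = 0$.} This is the main obstacle. A direct computation from the same long exact sequence identifies $\Ext^1(k, N)$ with $\Hom(k, N')$: since $N \hookrightarrow E(N)$ is essential, $\Hom(k, E(N)) = \Hom(k, N)$, and the image of this socle inside $N'$ is zero because it sits in the kernel $N$ of the projection $E(N) \twoheadrightarrow N'$. So it suffices to show $\Hom(k, N') = 0$. The surjection $\Hom(M, E(N)) \twoheadrightarrow \Hom(M, N')$ (supplied by $\Ext^1(M, N) = 0$), combined with the same socle observation, shows that the restriction map $\Hom(M, N') \to \Hom(k, N')$ along the distinguished embedding $k \hookrightarrow M$ (given by $\mm \in \Ass M$) is the zero map: any $\phi \colon M \to N'$ lifts to some $\tilde\phi \colon M \to E(N)$, and $\tilde\phi|_k$ factors through $\Hom(k, E(N)) = \Hom(k, N) \subseteq N$, so $\phi|_k = 0$. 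If $\Hom(k, N')$ were nonzero, a nontrivial embedding $k \hookrightarrow N' \hookrightarrow E(N') = E_1$ into the next term of the minimal injective resolution of $N$ would, when $n = 1$ and $N' = E_1$ is injective, extend through the injectivity of $E_1$ to a map $M \to E_1$ restricting nontrivially to $k$, contradicting the vanishing of the restriction. For $n \geq 2$, $N'$ is no longer injective and the argument must be upgraded. The natural strategy is a descending induction on $i$ showing that the Bass numbers $\mu^i(\mm, N) = \dim_k \Ext^i(k, N)$ vanish for all $i \geq 1$, leveraging that the $\mm$-torsion part $\Gamma_\mm(\Omega^{-i} N) = E(k)^{\mu^i(\mm, N)}$ of each cosyzygy is an injective submodule and hence splits off as a direct summand. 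This allows one to reduce each descending step to (an iterated application of) the argument from the $n = 1$ case applied to higher cosyzygies $\Omega^{-(n-1)}N$, for which $M$ still lies in the corresponding left orthogonal by the Ext-shift identities. Establishing this descending induction is the main technical step.
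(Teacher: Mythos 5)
Your reduction to the local case and the portion of the induction giving $\Ext^i_R(k,N)=0$ for $i\geq 2$ are fine, and the argument that every $\phi\colon M\to N'$ restricts to zero along $k\hookrightarrow M$ is correct. But the crucial case $\Ext^1_R(k,N)=0$ is only actually established when $n=1$ (where $N'$ is injective). For $n\geq 2$ you explicitly punt: you propose a descending induction on Bass numbers but do not carry it out, and the premise on which that sketch rests — that $\Gamma_\mm(\Omega_{-i}N)=E(k)^{\mu^i(\mm,N)}$ is an injective direct summand of the cosyzygy — is false in general. The Bass number $\mu^i(\mm,N)$ counts copies of $E(k)$ in the $(i)$th term of the minimal injective resolution, not in the cosyzygy $\Omega_{-i}N$; the $\mm$-torsion submodule of a cosyzygy is usually a proper, non-injective submodule of that sum of $E(k)$'s and does not split off. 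So the proposed repair does not close the gap, and your proof stops one step short precisely at the point that requires the most work.

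The paper avoids this obstruction entirely by not trying to show the vanishing of a single Ext group directly. Instead, it works with the shifted classes $\Ccal_{(i)}$ and the shift lemma (\cref{L:cotorsion-pair-shifts}), and wields the inductive hypothesis to conclude $k(\qq)\in\Ccal_{(1)}$ for all $\qq\in\Ass M$. This places the injective envelopes $E(L)$ and $E(M/L)$ in $\Ccal_{(1)}$, at which point a horseshoe diagram built from $0\to L\to M\to M/L\to 0$ and the injective envelopes pushes $\Omega_{-1}(M/L)$ into $\Ccal_{(2)}$, then $M/L$ into $\Ccal_{(1)}$ (by shift (ii)), and finally $L$ into $\Ccal$ (again by shift (ii)). Taking $L=k(\pp)$ finishes. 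The advantage of this phrasing is that it never isolates $\Ext^1(k,N)$; the closure properties of the $\Ccal_{(i)}$ do the bookkeeping. If you want to complete a proof along your lines, the right move is to replace the Bass-number splitting idea with the paper's horseshoe-plus-shift argument, which is exactly what substitutes for the missing $\Ext^1$ step.
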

\begin{proof}
It is enough to prove the statement for $\Pcal = \{N\}$ where $N$ is pure-injective of injective dimension $n > 0$, we proceed by induction on $n$. For $n =1$, $\Ccal$ is closed under submodules. So if $\pp \in \Ass M$, then $R/\pp \in \Ccal$. Moreover, $\Ccal$ is closed under direct limits, and $k(\pp)$ is an direct limit of copies of $R/\pp$ in $\Mod R$, so it follows that $k(\pp) \in \Ccal$. 

Fix a $n>1$ and suppose that the statement holds for all $0 <k <n$.  In particular, this means that the statement holds for the cotorsion pairs $(\Ccal_{(n-k)}, \Wcal_{(n-k)})$. Take $M \in \Ccal$ and $\pp \in \Ass M$. We claim that we can assume without loss of generality that $M = M_\pp$, so $\pp$ is maximal in $\Ass M$. This is because $\Ccal$ is closed under direct limits, so if $M \in \Ccal$, then $M_\pp \in \Ccal$, and $\pp \in \Ass M$ if and only if $\pp R_\pp \in \Ass M_\pp \subseteq \Spec {R_\pp}$.

Suppose $L$ is a $\pp$-torsion $R_\pp$-submodule of $M \cong M_\pp$. By the induction hypothesis, we can assume that $k(\qq) \in \Ccal_{(1)}$ for every $\qq \in \Ass M$. As $L$ is necessarily filtered by $k(\pp)$, we can assume that $L \in \Ccal_{(1)}$, and similarly, for any module $X$ such that $\Ass X \subseteq \Ass M$, its injective envelope $E(X)$, is in $\Ccal_{(1)}$. Consider the following commutative diagram with exact rows and columns formed by the horseshoe lemma.

\begin{equation}\label{eq:p-torsion-seq}
\begin{tikzcd}
& 0 \arrow[d] & 0 \arrow[d] & 0 \arrow[d]\\
0 \arrow[r] &L \arrow[r] \arrow[d] &  M  \arrow[r] \arrow[d] &  N  \arrow[r] \arrow[d] &0\\
0 \arrow[r] &E(L) \arrow[r] \arrow[d] &  E(L) \oplus E(N)  \arrow[r] \arrow[d] &  E(N)  \arrow[r] \arrow[d] &0\\
0 \arrow[r] &\Omega_{-1}(L) \arrow[r]\arrow[d]  &  Y  \arrow[r]\arrow[d]  &  \Omega_{-1}(N)  \arrow[r]\arrow[d]  &0\\
& 0 & 0 & 0
\end{tikzcd}
\end{equation}

By the above explanation, $E(L), E(N) \in \Ccal_{(1)}$, so also their direct sum is in $\Ccal_{(1)}$. Considering the middle vertical column and applying \cref{L:Tor-pair-shifts}(i), $Y \in \Ccal_{(1)}$. As also $\Omega_{-1}(L)$ is filtered by copies of $k(\pp)$, $\Omega_{-1}(L) \in \Ccal_{(1)}$, so it follows that $\Omega_{-1}(N) \in \Ccal_{(2)}$ by considering the last row. Therefore, by applying \cref{L:Tor-pair-shifts}(ii) to the right-most column, we find that $N \in \Ccal_{(1)}$. In particular, $L \in \Ccal$ by applying \cref{L:Tor-pair-shifts}(ii) to the first row. 

As this proof holds for any $\pp$-torsion module $L$, it holds for $k(\pp)$, so we conclude that $k(\pp) \in \Ccal$, as required. \end{proof}

\begin{cor}\label{C:C-inj-env}
Let $(\Ccal, \Wcal)$ be a hereditary cotorsion pair cogenerated by a class $\Pcal \subseteq \Ical \cap \Pcal\Ical_0$. Then $\Ccal$ is closed under injective envelopes. 
\end{cor}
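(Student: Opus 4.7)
The corollary is almost immediate from the two preceding lemmas, so my plan is essentially to chain them together. Given $M \in \Ccal$, the plan is first to apply \cref{l:associated-residue} to each prime $\pp \in \Ass M$, yielding $k(\pp) \in \Ccal$ for every associated prime of $M$. Then the second lemma preceding \cref{l:associated-residue}, which says that if $\{k(\pp) \mid \pp \in \Ass M\} \subseteq \Ccal$ then $E(M) \in \Ccal$, directly gives the conclusion.

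In more detail, I would write: Let $M \in \Ccal$. By \cref{l:associated-residue}, for each $\pp \in \Ass M$ we have $k(\pp) \in \Ccal$. The injective envelope decomposes as
\[
E(M) \cong \bigoplus_{\pp \in \Ass M} E(k(\pp))^{(\alpha_\pp)}
\]
for some cardinals $\alpha_\pp$, and each $E(k(\pp))$ is filtered by copies of $k(\pp)$ (via the $\pp$-adic filtration on the $\pp$-local injective hull). Since $\Ccal$ is the left-hand class of a cotorsion pair, it is closed under direct sums and under transfinite extensions (Eklof's lemma), so $E(M) \in \Ccal$.

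There is no real obstacle here — the substantive work was done in \cref{l:associated-residue}, whose inductive argument on injective dimension (using the shift properties in \cref{L:cotorsion-pair-shifts}) is what forces $k(\pp)$ into $\Ccal$ for each associated prime. The corollary itself is a bookkeeping step: assemble $E(M)$ from the $E(k(\pp))$'s, and observe that membership in $\Ccal$ propagates through direct sums and filtrations.
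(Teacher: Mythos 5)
Your proof is correct and follows exactly the paper's route: the corollary is the composition of \cref{l:associated-residue} (giving $k(\pp)\in\Ccal$ for every $\pp\in\Ass M$) with the unnamed lemma immediately preceding it (giving $E(M)\in\Ccal$ once all those residue fields lie in $\Ccal$, via the decomposition $E(M)\cong\bigoplus_{\pp\in\Ass M}E(k(\pp))^{(\alpha_\pp)}$ and the filtration of each $E(k(\pp))$ by copies of $k(\pp)$). One tiny terminological slip: the relevant filtration on $E(k(\pp))$ is by the submodules $\{x\mid \pp^n x=0\}$, which is the torsion (socle) filtration rather than the $\pp$-adic one, but this does not affect the argument.
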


The embedding of $L_{\Tor_1}$ into $L_{\Ext^1}$ of \cref{ss:lattice-of-pairs} takes hereditary $\Tor$-pairs generated by a class of modules in $\Fcal$ to hereditary cotorsion pairs cogenerated by modules from $\Ical \cap \Pcal\Ical_0$. Thus, we also get:

\begin{cor}\label{C:TP-inj-env}
Let $(\Ecal, \Ccal)^\top$ be a hereditary $\Tor$-pair generated by a class $\Gcal \subseteq \Fcal$. Then $\Ccal$ is closed under injective envelopes. 
\end{cor}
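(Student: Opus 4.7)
The plan is to reduce \cref{C:TP-inj-env} directly to the preceding \cref{C:C-inj-env} by passing from the hereditary $\Tor$-pair $(\Ecal, \Ccal)^\top$ to an associated hereditary cotorsion pair in which $\Ccal$ sits as the \emph{left}-hand class. Since $R$ is commutative, I first invoke the symmetry of $\Tor$-pairs recalled in \cref{ss:Tor-pairs}: the swapped pair $(\Ccal, \Ecal)^\top$ is again a hereditary $\Tor$-pair. Applying \cref{tor-induce-cotor} to this swapped pair yields a hereditary cotorsion pair $(\Ccal, \Ccal^{\perp_1})$, with $\Ccal$ now on the left.

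Next I identify a cogenerating class for $(\Ccal, \Ccal^{\perp_1})$ lying inside $\Ical \cap \Pcal\Ical_0$, so that \cref{C:C-inj-env} can be applied. The natural candidate is $\Gcal^+$. Using the $\Hom$--$\otimes$ adjunction with $\Qbb/\Zbb$ as an injective cogenerator (see \cref{ss:hom-formulae}), for each $G \in \Gcal$ and each $R$-module $M$ one has
\[
\Ext^i_R(M, G^+) \cong \Hom_\Zbb(\Tor_i^R(M, G), \Qbb/\Zbb) \qquad (i \geq 0),
\]
so $M \in \Perp{}(\Gcal^+)$ iff $\Tor_i^R(M, G) = 0$ for all $G \in \Gcal$ and all $i \geq 1$, iff $M \in \Gcal^\top = \Ccal$. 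Here the last equality uses that $(\Ecal, \Ccal)^\top$ is hereditary and generated by $\Gcal$ together with the commutativity of $R$. Hence the cotorsion pair $(\Ccal, \Ccal^{\perp_1})$ is cogenerated by $\Gcal^+$.

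Finally I check that $\Gcal^+ \subseteq \Ical \cap \Pcal\Ical_0$. The character dual $G^+$ is pure-injective for any $R$-module $G$, so $\Gcal^+ \subseteq \Pcal\Ical_0$. Moreover, the functor $(-)^+$ is exact and sends flat modules to injective modules, so a finite flat resolution of any $G \in \Gcal \subseteq \Fcal$ dualises to a finite injective coresolution of $G^+$, giving $\Gcal^+ \subseteq \Ical$. Now \cref{C:C-inj-env} applies to $(\Ccal, \Ccal^{\perp_1})$ and yields that $\Ccal$ is closed under injective envelopes. The only step requiring care is bookkeeping of the ``left vs.\ right'' roles of the various pairs and verifying that $\Gcal^+$ actually cogenerates the cotorsion pair (rather than just the canonical but bigger cogenerator $\Ecal^+$ supplied by \cref{tor-induce-cotor}); beyond this no substantial obstacle remains.
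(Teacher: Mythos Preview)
Your proof is correct and follows essentially the same approach as the paper: the paper's one-line justification is that the canonical embedding of $L_{\Tor_1}$ into $L_{\Ext^1}$ takes the hereditary $\Tor$-pair to a hereditary cotorsion pair $(\Ccal,\Ccal^{\perp_1})$ cogenerated by modules in $\Ical \cap \Pcal\Ical_0$, after which \cref{C:C-inj-env} applies. You have simply unpacked this by making the swap $(\Ccal,\Ecal)^\top$ explicit, verifying that $\Gcal^+$ (rather than the larger $\Ecal^+$) already cogenerates, and checking $\Gcal^+ \subseteq \Ical \cap \Pcal\Ical_0$ directly.
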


Before proceeding, we show that constituents of a $\Tor$-pair not satisfying the assumptions of \cref{C:TP-inj-env} may fail to be closed under injective envelopes. We take the opportunity to exhibit more examples.

\begin{ex}\label{ex1}
Let $(R, \mm, k)$ be a local noetherian which is not self-injective and with $\mm  \in \Ass R$. Consider the $\Tor$-pair generated by $k$. Then $R \in k^\top$, however we claim that $E(R) \notin k^\top$. It suffices to show that $E(k) \notin k^\top$ as $E(k)$ is a direct summand of $E(R)$ by the assumption that $\mm  \in \Ass R$. 
Recall that $\Hom_R(E(k), E(k)) \cong \hat{R}$, the $\mm$-adic completion of $R$.

Suppose $d = \dim R>0$, so in particular by Grothendieck's vanishing theorem, the local cohomologies $H_{\hat{\mm}}^d(\hat{R}) \cong H_\mm^d(R) \neq 0$, see \cite[Proposition 3.5.4 (d),Theorem 3.5.7]{BH98}.
Assume for the contradiction that $\Tor^R_d(E(k), k)=0$. Then, also $\Ext^d_R(k, \hat{R})$ vanishes via the isomorphism 
\[\Ext^d_R(k, \hat{R})  \cong \Hom_R(\Tor^R_d(E(k), k), E(k)) =0. \] So by \cite[Remark 3.5.3(c)]{BH98} 
\[
H_{\hat{\mm}}^d(\hat{R}) \cong  \varinjlim_i \Ext_{\hat{R}}^d(\hat{R}/\hat{\mm}^i, \hat{R}) \cong \varinjlim_i \Ext_R^d(R/\mm^i, \hat{R})=0,\]
 a contradiction.
\end{ex}

The following remark observes that over commutative rings, the classes of a $\Tor$-pair are interchangeable, but not symmetric. Thus a $\Tor$-pair generated by a module of infinite flat dimension may give rise to a hereditary $\Tor$-pair (co)generated by modules of finite flat dimension. See \cref{hered-tor-pair-reg-ring} for when hereditary $\Tor$-pairs are both generated and cogenerated by modules of bounded flat dimension.

\begin{rmk}
Given a generating set of a $\Tor$-pair, it is not immediately clear whether it will be generated or cogenerated by modules of bounded flat dimension. For example, let $(R, \mm, k)$ be a local noetherian self-injective ring which is not a field, and consider again the $\Tor$-pair generated by $k$. Then $k^{\top_1}= k^\top= \Pcal_0 = \Ical_0 \subsetneqq \Mod R$, so the $\Tor$-pair generated by $k$ is $(\Mod R, \Pcal_0)^\top$. The class $\Pcal_0$ is closed under injective envelopes, but the flat dimension of $k$ is not bounded as $R$ is not a field. 
However, clearly this $\Tor$-pair is cogenerated by $R$.
\end{rmk}

There exist hereditary $\Tor$-pairs which are not generated (or cogenerated) by classes of modules of finite flat dimension, as seen in the following example.

\begin{ex}\label{ex2}
Let $R$ be a non-Gorenstein commutative noetherian ring with a Gorenstein flat module which is not flat (the commutative noetherian rings which do not have an analogue of the Govorov-Lazard Theorem for the Gorenstein flat modules provide examples of such rings, see \cite[Theorem 2.7 and Example 2.8]{HJ11}).
Let $\Gcal \Fcal_0$ denote the Gorenstein flat modules in $\Mod R$. Then, by \cite[Corollary 5.7]{SS20}, $(\Gcal \Fcal_0, \Gcal \Fcal_0^\top)^\top$ forms a hereditary $\Tor$-pair. By assumption, $\Fcal_0 \subsetneq \Gcal \Fcal_0$, therefore there exists a module in $\Gcal \Fcal_0$ which is of infinite flat dimension by \cite[Theorem 3.19]{H04}. Moreover, $\Ical \subseteq \Gcal \Fcal_0^\top$ where $\Ical$ denotes the class of modules of finite injective dimension by \cite[Theorem 3.14]{H04}. By the assumption that $R$ is not Gorenstein, there exists an injective module of infinite flat dimension by \cite[Proposition 1]{I80}. Thus $(\Gcal \Fcal_0, \Gcal \Fcal_0^\top)^\top$ is a hereditary $\Tor$-pair which is not generated by modules of bounded flat dimension on either side. Moreover, $\Gcal \Fcal_0^\top$ is closed under injective envelopes as it contains all injective modules. 

\end{ex}

Let us continue again in the direction of the promised classification. The closure under injective envelopes yields the following reduction important for one of the main inductive arguments in \cref{T:Torpair-characterisation}.

\begin{lemma}\label{C-induction-step}
Let $(\Ccal, \Wcal)$ be a hereditary cotorsion pair cogenerated by a class $\Pcal \subseteq \Ical \cap \Pcal\Ical_0$. Then the following hold for any $R$-module $M$. 
\begin{itemize}
	\item[(i)] $M \in \Sub \Ccal$ if and only if $E(M) \in \Ccal$.
	\item[(ii)]$M \in \Ccal$ if and only if $M \in \Sub \Ccal$ and $\Omega_{-1}(M) \in \Ccal_{(1)}$. 
\end{itemize}

\end{lemma}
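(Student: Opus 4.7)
The idea is that both claims are essentially bookkeeping on top of the two lemmas already in place: \cref{l:associated-residue} (which controls associated primes), the preceding lemma (showing $E(M) \in \Ccal$ as soon as $k(\pp) \in \Ccal$ for every $\pp \in \Ass M$), and the shifted closure properties in \cref{L:cotorsion-pair-shifts}. There is no real obstacle; the main point is to wire these together.

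For part (i), the direction ``$\Leftarrow$'' is immediate, since the inclusion $M \hookrightarrow E(M)$ exhibits $M$ as a submodule of an element of $\Ccal$. For ``$\Rightarrow$'', pick an embedding $M \hookrightarrow C$ with $C \in \Ccal$. Then $\Ass M \subseteq \Ass C$, and by \cref{l:associated-residue} we have $k(\pp) \in \Ccal$ for every $\pp \in \Ass C$, in particular for every $\pp \in \Ass M$. The preceding lemma then gives $E(M) \in \Ccal$.

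For part (ii), consider the short exact sequence
\[
0 \to M \to E(M) \to \Omega_{-1}(M) \to 0.
\]
For ``$\Rightarrow$'', assume $M \in \Ccal$. In particular $M \in \Sub\Ccal$, and by (i) we obtain $E(M) \in \Ccal$. Applying \cref{L:cotorsion-pair-shifts}(i) with $L = M \in \Ccal$ and $E(M) \in \Ccal \subseteq \Ccal_{(1)}$ yields $\Omega_{-1}(M) \in \Ccal_{(1)}$.

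For ``$\Leftarrow$'', assume $M \in \Sub\Ccal$ and $\Omega_{-1}(M) \in \Ccal_{(1)}$. By (i), $E(M) \in \Ccal = \Ccal_{(0)}$. Now apply \cref{L:cotorsion-pair-shifts}(ii) with $i = 0$ to the above short exact sequence: since $E(M) \in \Ccal_{(0)}$ and $\Omega_{-1}(M) \in \Ccal_{(1)}$, we conclude $M \in \Ccal_{(0)} = \Ccal$, as required.
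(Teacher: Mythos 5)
Your proof is correct and follows essentially the same route as the paper. The only small difference is in part (i), direction ``$\Rightarrow$'': the paper cites \cref{C:C-inj-env} and observes that $E(M)$ is a direct summand of $E(C)$ for an embedding $M \hookrightarrow C \in \Ccal$, whereas you unpack that corollary, going through $\Ass M \subseteq \Ass C$, \cref{l:associated-residue}, and the lemma on injective envelopes --- both arrive at $E(M) \in \Ccal$ by the same underlying mechanism, and your part (ii) is identical to the paper's.
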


\begin{proof}
For (i), if $M \in \Sub \Ccal$, then $E(M)$ is a direct summand of a module in $\Ical_0(R) \cap \Ccal$ by properties of injective envelopes. If  $E(M) \in \Ccal$, then trivially  $M \in \Sub \Ccal$.

For (ii), suppose $M \in \Ccal$. Then it follows trivially that  $M \in \Sub \Ccal$, and $\Omega_{-1}(M) \in \Ccal_{(1)}$ follows since $E(M) \in \Ccal$ by \cref{C:C-inj-env} and applying \cref{L:cotorsion-pair-shifts}(i). 

Now suppose that $M \in \Sub \Ccal$ and $\Omega_{-1}(M) \in \Ccal_{(1)}$. Then $E(M) \in \Ccal$ by (i), so by \cref{L:cotorsion-pair-shifts}(ii), $M \in \Ccal$. 
\end{proof}

The following lemmas will be used in the proof of \cref{boundedbydepth}. 

\begin{lemma}\label{depth0fdfinite}
Let $(R,\mm,k)$ be a local commutative noetherian ring with $\depth(R)=0$ and $N$ a pure-injective module of finite injective dimension. Then $\Ext_R^i(k,N) =0$ for all $i>0$. 
\end{lemma}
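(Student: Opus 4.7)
The plan is to reduce the Ext-vanishing statement to a Tor-vanishing statement about the character dual $N^+$, which has finite flat dimension. Since $N$ is pure-injective, the canonical map $N\to N^{++}$ is a pure monomorphism that splits, so $\Ext^i_R(k,N)$ is a direct summand of $\Ext^i_R(k,N^{++})$. By the adjunction formula from \cref{ss:hom-formulae} applied with $S=\Zbb$ and $C=\Qbb/\Zbb$, we get
\[
\Ext^i_R(k,N^{++}) \cong \Hom_\Zbb(\Tor^R_i(k,N^+),\Qbb/\Zbb).
\]
Hence it suffices to show $\Tor^R_i(k,N^+)=0$ for every $i\geq 1$. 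Dualizing the length-$n$ injective resolution of $N$ via $\Hom_\Zbb(-,\Qbb/\Zbb)$ (which converts injectives to flats) produces a length-$n$ flat resolution of $N^+$, so $\fd_R N^+\leq n$; additionally $N^+$ is pure-injective as any character dual is.

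Next, I would aim to show that $N^+$ must in fact be \emph{flat}, which immediately yields the required Tor vanishing. The plan is to invoke a pure-injective/cotorsion variant of the Auslander--Buchsbaum equality of the form $\fd_R M+\depth_R M=\depth R$, valid for cotorsion modules $M$ of finite flat dimension over a local noetherian ring. Since $N^+$ is cotorsion (being pure-injective) and has finite flat dimension, and since both summands on the left are non-negative while $\depth R=0$, this would force $\fd_R N^+=0$, i.e., $N^+$ is flat. Then $\Tor^R_i(k,N^+)=0$ for $i\geq 1$ is automatic.

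The main obstacle is establishing (or carefully citing) this Auslander--Buchsbaum-type identity for pure-injective cotorsion modules of finite flat dimension, as the classical statement applies only to finitely generated modules. This step would use Enochs' structure theorem for flat cotorsion modules (every such is a product of completions $\widehat{R_\pp^{(X_\pp)}}$), together with Chouinard's formula $\fd_R M=\sup_\pp(\depth R_\pp-\width_{R_\pp} M_\pp)$ and the observation that for pure-injective $M$ the local widths are controlled by local depths. As a fall-back, one could attempt a direct induction on $n=\id N$, using the embedding $k\hookrightarrow R$ afforded by $\mm\in\Ass R$ and the long exact sequence associated to $0\to k\to R\to R/k\to 0$; the isomorphism $\Tor^R_i(k,N^+)\cong\Tor^R_{i+1}(R/k,N^+)$ for $i\geq 1$ (using $\fd R=0$) immediately handles degree $i=n$, but iterating down to $i=1$ requires finer control over the socle filtration of $R$.
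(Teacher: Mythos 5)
The reduction of the $\Ext^i_R(k,N)$-vanishing to the $\Tor^R_i(k,N^+)$-vanishing is sound: $N^+$ is pure-injective with $\fd_R N^+ \le \id_R N$, and $N$ is a direct summand of $N^{++}$ because $N$ is pure-injective. However, the central claim that $N^+$ must be \emph{flat} --- and the Auslander--Buchsbaum-type identity $\fd_R M + \depth_R M = \depth R$ you invoke to prove it --- is false. Over a local noetherian ring of depth zero there do exist pure-injective (hence cotorsion) modules of strictly positive finite flat dimension. Take $R = k[[x,y,z]]/(x^2,xy,xz)$: here $x\mm = 0$ so $\depth R = 0$, while $\dim R = 2$. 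For $\pp = (x,y)$ one computes $R_\pp \cong k[[y,z]]_{(y)}$, a DVR, so $k(\pp) \cong k((z))$ has $\fd_R k(\pp) = \fd_{R_\pp} k(\pp) = 1$; and $k(\pp)$ is a vector space over the field $k(\pp)$, hence pure-injective over $R$. Thus $M = k(\pp)$ already violates your formula ($\fd_R M = 1 > 0 = \depth R$), and $N \coleq k(\pp)^+$ is a pure-injective $R$-module of $\id_R N = 1$ for which $N^+ \cong k(\pp)^{++}$ is \emph{not} flat, since $k(\pp)$ is a direct summand of it. The conclusion of the lemma is still true for this $N$, but the mechanism is not flatness of $N^+$; it is that $k$ and $k(\pp)$ have disjoint supports, so $\Tor^R_i(k,k(\pp))=0$ for all $i$. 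Chouinard's formula does not rescue the argument either: it is a supremum over all primes, and the contribution of $\mm$ alone gives no bound (indeed $\width_R k(\pp) = \infty$ in the example above). Your fall-back (the embedding $k\hookrightarrow R$ and the isomorphism $\Tor^R_i(k,N^+)\cong\Tor^R_{i+1}(R/k,N^+)$) only kills the top degree $i=\fd N^+$, as you yourself observe. The paper's proof is also an induction on $n=\id N$ and also uses the embedding $k\hookrightarrow R$ for the base case $n=1$, but the inductive step is driven by a genuinely different tool: it exploits that the left class of the cotorsion pair is closed under injective envelopes (\cref{C:C-inj-env}), that $\Omega_{-1}(k)=E(k)/k$ is $\mm$-torsion and therefore filtered by copies of $k$, and then applies the reduction of \cref{C-induction-step}(ii). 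Without some substitute for that machinery, neither branch of your proposal closes the gap between $\Tor$-degree $n$ and $\Tor$-degree $1$.
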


\begin{proof}
Let $(R,\mm,k)$ be a local ring of zero depth and $N$ a module of finite injective dimension. We will prove the statement by induction on the injective dimension of $N$. For the base step, suppose that $\id (N) =1$.
Then, as $\depth (R) =0$, $k$ is a submodule of $R$, so  there is a short exact sequence
$ \begin{tikzcd}[cramped, sep=small]
0 \arrow[r] &k \arrow[r] &  R  \arrow[r] &R/I  \arrow[r] &0
\end{tikzcd}
$. Applying $\Hom_R(-,N)$, we find the exact sequence
\[ \begin{tikzcd}[cramped, sep=small]
0 \arrow[r] &\Ext_R^1(R,N) \arrow[r] &  \Ext_R^1(k,N)  \arrow[r] &\Ext_R^2(R/I,N)  \arrow[r] &0,
\end{tikzcd}
\]
where the two outer modules vanish as $\id (N) =1$ and $R$ is projective. Therefore we conclude that $\Ext_R^1(k,N)$. 

For the induction step, suppose that $k \in \Ccal' := \Perp{}(\Ical_{n-1}(R) \cap \Pcal\Ical_0)$, and we will show that $k \in \Ccal = \Perp{}(\Ical_{n}(R)\cap \Pcal\Ical_0)$. We will show that $k$ satisfies the conditions in \cref{C-induction-step}(ii). As $k$ is a submodule of $R$, $k \in \Sub{ \Ccal}$, as clearly $R \in \Ccal$. Moreover, $\Omega_{-1}(k)$ is filtered by copies of $k$ which by the induction hypothesis is in  $\Ccal'$, and we note additionally that $\Ccal' \subset  \Ccal_{(1)}$. Therefore, we conclude that $k \in \Ccal$, as required. 

\end{proof}

\begin{lemma}\label{Torffddepth}
Let $(R,\mm,k)$ be a local commutative noetherian ring and $N$ a pure-injective module of finite injective dimension. Then $\Ext_R^j(k(\pp),N)=0$ for every $j>\depth (R_\pp)$.
\end{lemma}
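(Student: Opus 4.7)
The plan is to first reduce to the case $\pp = \mm$ and then induct on $d = \depth(R_\pp)$. For the reduction, equation~\cref{eq:the-eq} gives the colocalisation isomorphism
\[
\Ext^j_R(k(\pp), N) \cong \Ext^j_{R_\pp}(k(\pp), N^\pp),
\]
so it suffices to check that $N^\pp$ is pure-injective and of finite injective dimension over $R_\pp$. Pure-injectivity over $R_\pp$ is one of the general properties of colocalisation recorded in the paper. Finite injective dimension of $N^\pp$ over $R_\pp$ follows because $R \to R_\pp$ is flat and each minimal cosyzygy $\Omega_{-i}N$ is pure-injective, hence cotorsion, so $\Ext^{\geq 1}_R(R_\pp, \Omega_{-i}N) = 0$; thus $\Hom_R(R_\pp, E^\bullet)$ is an $R_\pp$-injective resolution of $N^\pp$ of the same length as an $R$-injective resolution $E^\bullet$ of $N$. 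After relabelling we are reduced to proving $\Ext_R^j(k, N) = 0$ for $j > d = \depth(R)$, with $R$ local.

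The base case $d = 0$ is exactly \cref{depth0fdfinite}. For the inductive step $d \geq 1$, I would pick a non-zero-divisor $x \in \mm$ on $R$ and set $\bar R = R/xR$, so $\depth(\bar R) = d - 1$. The main tool is the Cartan--Eilenberg change-of-rings spectral sequence
\[
E_2^{p,q} = \Ext_{\bar R}^p\!\bigl(k, \Ext_R^q(\bar R, N)\bigr) \Longrightarrow \Ext_R^{p+q}(k, N).
\]
Because $\pd_R \bar R = 1$, only the rows $q = 0, 1$ are nonzero, with $E_2^{p,0} = \Ext^p_{\bar R}(k, N[x])$ and $E_2^{p,1} = \Ext^p_{\bar R}(k, N/xN)$, where $N[x] = \Hom_R(\bar R, N)$. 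If I can verify that both $N[x]$ and $N/xN$ are pure-injective $\bar R$-modules of finite injective dimension, then the inductive hypothesis applied to the local ring $\bar R$ of depth $d - 1$ gives $E_2^{p,q} = 0$ for $p > d - 1$, and the spectral sequence then yields $\Ext_R^{p+q}(k, N) = 0$ for $p + q > d$, closing the induction.

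The main obstacle is verifying these two properties for $N[x]$ and, more seriously, for $N/xN$. For $N[x]$, pure-injectivity over $\bar R$ is standard, and the finite complex of $\bar R$-injective modules $\Hom_R(\bar R, E^\bullet)$ has cohomology concentrated only in degrees $0$ and $1$; a truncation argument then exhibits bounded $\bar R$-injective resolutions of both $N[x]$ and $N/xN$, so finite injective dimension over $\bar R$ is fine for both. The delicate point is pure-injectivity of $N/xN$, since quotients of pure-injective modules need not be pure-injective. The remedy I expect to use is to bypass the spectral sequence and argue directly from the short exact sequence $0 \to N \xrightarrow{x} N \to N/xN \to 0$ (passing first to a sufficiently high cosyzygy of $N$, which remains pure-injective of finite injective dimension, to arrange that $x$ acts as a non-zero-divisor; the vanishing of $\Ext_R^j(k, N)$ for $j > d$ can then be shifted back). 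Since $x$ annihilates $k$, multiplication by $x$ on $\Ext_R^i(k, N)$ is zero, producing short exact sequences
\[
0 \to \Ext_R^i(k, N) \to \Ext_R^i(k, N/xN) \to \Ext_R^{i+1}(k, N) \to 0,
\]
and pulling the spectral sequence argument back one level --- or handling the vanishing of $\Ext^i_R(k, N/xN)$ via a parallel identity analysing how $R$- and $\bar R$-Ext are related for modules killed by $x$ --- gives the desired vanishing $\Ext_R^{i+1}(k, N) = 0$ for $i + 1 > d$.
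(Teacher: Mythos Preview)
Your reduction to the local case is correct and matches the paper. The inductive strategy via $R \to \bar R = R/xR$ is also reasonable, but the inductive step as written has a genuine gap.

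The problematic point is precisely the one you flag: pure-injectivity of $N/xN$. Your proposed remedy is to pass to a high cosyzygy $N'$ of $N$ ``to arrange that $x$ acts as a non-zero-divisor'' and then use the short exact sequence $0 \to N' \xrightarrow{x} N' \to N'/xN' \to 0$. There are two issues with this. First, passing to cosyzygies does \emph{not} force $x$ to be a non-zero-divisor: for instance, over $R = k[[t]]$ and $N = R \oplus E(k)$, the first cosyzygy is $E(k)$, on which $t$ still has nonzero annihilator. Second, even granting a short exact sequence as above, applying Rees' change-of-rings gives $\Ext_R^{i+1}(k,N') \cong \Ext^i_{\bar R}(k, N'/xN')$, and to invoke the inductive hypothesis over $\bar R$ you are right back to needing $N'/xN'$ pure-injective over $\bar R$, which you have not established. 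Your two suggested ways out (``pulling the spectral sequence back one level'' and ``a parallel identity'') are both vague and, as far as I can see, circle back to the same unresolved issue.

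The fix is to aim for the \emph{other} collapse of your spectral sequence. Passing to $N' = \Omega_{-1}N$ gives $\Ext^1_R(\bar R, N') \cong \Ext^2_R(\bar R,N) = 0$ (since $\pd_R \bar R = 1$), so only the $q=0$ row survives and
\[
\Ext_R^i(k, N') \;\cong\; \Ext_{\bar R}^i\bigl(k,\ \Hom_R(\bar R, N')\bigr).
\]
Now $\Hom_R(\bar R, N')$ \emph{is} pure-injective over $\bar R$ and has finite $\bar R$-injective dimension (apply $\Hom_R(\bar R,-)$ to a finite injective resolution of $N'$; exactness follows from the vanishing just noted). The inductive hypothesis over $\bar R$ (of depth $d-1$) then gives the vanishing for $i > d-1$, hence $\Ext_R^j(k,N)=0$ for $j > d$ after shifting back. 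The paper carries out exactly this idea, but in a single step rather than inductively: it takes a maximal regular sequence generating $J$, passes to $N' = \Omega_{-d}N$, and uses the same change-of-rings isomorphism with $\Hom_R(R/J, N')$ to reduce directly to the depth-zero base case \cref{depth0fdfinite}.
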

\begin{proof}
In view of the isomorphism $\Ext_R^j(k(\pp),N)\cong \Ext_{R_\pp}^j(k(\pp),N^\pp)$, it is sufficient to prove the statement for when $\pp$ is the maximal ideal.

Let $x_1, \dots, x_m$ denote a maximal regular sequence, so $m = \depth (R)$, and let $J \coleq \langle x_1, \dots, x_m \rangle$. Fix a module $N$ of finite injective dimension over $R$. Then, for $j> \depth (R)$, $\Ext_R^j(R/J,N)=0$ as $\pd (R/J) =\depth (R)$, so in particular, $\Ext^{i}_R(R/J,\Omega_{-\depth (R)}N)=0$ for all $i>0$. 

Using this and \cref{eq:derived-tensor-hom}, we have the following isomorphisms for all $i>0$. 
\[
\Ext^{i+\depth (R)}_R(k,N) \cong \Ext^{i}_R(k,\Omega_{-\depth (R)}N) \cong \Ext^{i}_{R/J}(k,\Hom_R(R/J,\Omega_{-\depth (R)}N))\]

Moreover, $\Hom_R(R/J,\Omega_{-\depth (R)}N)$ is pure-injective of finite injective dimension as an $R/J$-module. 

As $J$ was generated by a maximal regular sequence, $R/J$ is a local ring with zero depth, so it remains only to apply \cref{depth0fdfinite}.

\end{proof}

The following two propositions provide the two main assignments of the classification \cref{T:Torpair-characterisation}. First, we introduce the following function will play an essential role in our classification results.

\begin{dfn} \label{def:functiondepth} We let the \newterm{depth function} be  the function $\depth \colon \Spec R \to \Zbb_{\geq 0}$ assigning to each prime ideal $\pp$, the depth of the ring localised at $\pp$, $\depth (R_\pp)$. 
\end{dfn} 

We consider functions on the spectrum $\Spec R$ with values in non-negative integers $\Zbb_{\geq 0}$.  For a general function $\phi\colon \Spec R \to \Zbb_{\geq 0}$, we will write $\phi \leq \depth$ as a shorthand notation for $\phi(\pp) \leq \depth(\pp)$ for all $\pp \in \Spec R$.

\begin{prop}\label{boundedbydepth}
Let $(\Ccal, \Wcal)$ be a hereditary cotorsion pair cogenerated by a class $\Pcal \subseteq \Ical \cap \Pcal\Ical_0$, and define a map $\phi \colon \Spec R \to \Zbb_{\geq 0}$ by $\pp \mapsto \inf \{i \in \Zbb_{\geq 0} \mid k(\pp) \in \Ccal_{(i)}\} $. Then $\phi \leq \depth$. 
\end{prop}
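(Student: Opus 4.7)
The plan is to show directly that $k(\pp) \in \Ccal_{(\depth R_\pp)}$, which by definition of $\phi$ immediately gives $\phi(\pp) \leq \depth(R_\pp)$. Unpacking what this requires, I need $\Ext^i_R(k(\pp), W) = 0$ for all $W \in \Wcal$ and all $i > \depth(R_\pp)$. Since $(\Ccal, \Wcal)$ is cogenerated by $\Pcal$, and by definition $\Ccal_{(m)} = {}^{\perp_{>m}}\Pcal$, it suffices to verify the $\Ext$-vanishing against every $N \in \Pcal$.

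Now the key observation is that this vanishing is precisely the content of \cref{Torffddepth}: for any pure-injective module $N$ of finite injective dimension, $\Ext_R^j(k(\pp), N) = 0$ whenever $j > \depth(R_\pp)$. Since every $N \in \Pcal$ lies in $\Ical \cap \Pcal\Ical_0$, this lemma applies to each such $N$ and gives exactly what is needed.

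So the proof is essentially a one-step invocation of \cref{Torffddepth}. The real work has already been done in establishing that lemma, whose proof proceeded by reducing (via the isomorphism $\Ext_R^j(k(\pp), N) \cong \Ext_{R_\pp}^j(k(\pp), N^\pp)$) to the case where $\pp$ is maximal, then killing a maximal regular sequence to reduce to the zero-depth case \cref{depth0fdfinite}. There is no real obstacle at this stage, and the argument is short: fix $\pp \in \Spec R$, let $d = \depth(R_\pp)$, take any $N \in \Pcal$, apply \cref{Torffddepth} to conclude $\Ext^j_R(k(\pp), N) = 0$ for $j > d$, which says $k(\pp) \in \Ccal_{(d)}$, hence $\phi(\pp) \leq d = \depth(R_\pp)$.
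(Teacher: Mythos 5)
Your proof is correct and follows the paper's own argument exactly: both reduce the statement to the $\Ext$-vanishing of \cref{Torffddepth} and then unwind the definition $\Ccal_{(m)} = {}^{\perp_{>m}}\Pcal$. The only pedantic point is that when $R$ is not local one should first pass to $R_\pp$ via the colocalisation isomorphism $\Ext^i_R(k(\pp),N)\cong\Ext^i_{R_\pp}(k(\pp),N^\pp)$ before invoking \cref{Torffddepth} (which is stated for a local ring), but you note this reduction yourself and it is the same step the paper takes.
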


\begin{proof}
We have for any $\pp \in \Spec R$
$$\Ext^i_{R}(k(\pp),N) \cong \Ext^i_{R_\pp}(k(\pp),N^\pp)$$
for any pure-injective $R$-module $N$ and $i>0$. By  \cref{Torffddepth}, we know that $\Ext^j_{R_\pp}(k(\pp),N^\pp)=0$ for every pure-injective $N$ of finite injective dimension and every $j > \depth(\pp)$. It follows that $k(\pp) \in \Perp{>\depth(\pp)}\Pcal$, which by definition means that $k(\pp) \in \Ccal_{(\depth(\pp))}$ so $\phi(\pp) \leq \depth(\pp)$, as required. 
\end{proof}

\begin{prop}\label{function-to-torpair}
Let $\phi \colon \Spec R \to \Zbb_{\geq 0}$ be a function satisfying $\phi \leq \depth$. Then there is a hereditary $\Tor$-pair $(\Ecal, \Ccal)^\top$ generated by a set of modules of finite flat dimension such that $\Ccal = \{M \in \Mod R \mid \depth_{R_\pp}(M_\pp) \geq \phi(\pp) ~\forall \pp \in \Spec R\}$.

Moreover, $\Ccal_{(i)} = \{M \in \Mod R \mid \depth_{R_\pp}(M_\pp) \geq (\phi(\pp)-i) ~\forall \pp \in \Spec R\}$ for any $i \geq 0$. In particular, $k(\pp) \in \Ccal_{(i)}$ if and only if $\phi(\pp)\leq i$.
\end{prop}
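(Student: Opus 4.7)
The strategy is to take as generators the Koszul cosyzygies from Proposition~\ref{p:torpair-generator}, namely
\[
\Gcal \coleq \{S_{\phi(\pp)}(\pp R_\pp; R_\pp) \mid \pp \in \Spec R,\ \phi(\pp) > 0\}.
\]
Each such generator has finite flat dimension $\phi(\pp) \leq \depth(R_\pp) < \infty$. Enlarging $\Gcal$ by closing under the yokes coming from the explicit Koszul flat resolution (still a set of modules of bounded finite flat dimension) produces a class $\Gcal^+$, and the $\Tor$-pair $(\Ecal,\Ccal)^\top$ generated by $\Gcal^+$ is hereditary by the observation in \cref{ss:Tor-pairs}. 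Its right-hand class then equals the full higher-$\Tor$ orthogonal $\bigcap_\pp S_{\phi(\pp)}(\pp R_\pp; R_\pp)^\top$, since adjoining the yokes forces vanishing of all higher Tors of the original $\Gcal$.

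The description of $\Ccal$ then follows directly from the equivalence (i)$\Leftrightarrow$(ii) in \cref{p:torpair-generator}: $M \in S_{\phi(\pp)}(\pp R_\pp; R_\pp)^\top$ iff $\depth_{R_\pp}(M_\pp) \geq \phi(\pp)$, a condition which is vacuous when $\phi(\pp) = 0$. Intersecting over $\pp$ yields the stated formula for $\Ccal$.

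For the \emph{moreover} clause, the key input I plan to use is the identification of higher $\Tor$s of $S_k = S_k(\pp R_\pp; R_\pp)$ with Koszul cohomology. Under the hypothesis $\phi(\pp) = k \leq \depth(R_\pp)$, the truncated Koszul cochain complex gives a finite flat resolution
\[
0 \to K^0 \to K^1 \to \cdots \to K^{k-1} \to K^k \to S_k \to 0
\]
by \cref{prop:depth}. Tensoring with $M_\pp$ and reading off homologies yields $\Tor_j^{R_\pp}(S_k, M_\pp) \cong H^{k-j}(\pp R_\pp; M_\pp)$ for $1 \leq j \leq k$, while $\Tor_j = 0$ for $j > k$. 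Hence $M \in \Ccal_{(i)}$ translates to $H^m(\pp R_\pp; M_\pp) = 0$ for every $\pp$ and every $0 \leq m < \phi(\pp) - i$, which by \cref{prop:depth} is in turn equivalent to $\depth_{R_\pp}(M_\pp) \geq \phi(\pp) - i$ for every $\pp$.

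For the final assertion, I will compute $\depth_{R_\qq}(k(\pp)_\qq)$ prime by prime: since $\Ann_R k(\pp) = \pp$, the module $k(\pp)_\qq$ vanishes unless $\qq \supseteq \pp$; at $\qq = \pp$ the depth equals $0$ as $\pp R_\pp \in \Ass k(\pp)$; and for any $\qq \supsetneq \pp$ there exists $q \in \qq \setminus \pp$ acting as a unit on $k(\pp)$, so some generator of $\qq R_\qq$ acts invertibly, which forces the whole Koszul complex $K^\bullet(\qq R_\qq; k(\pp))$ to be exact and hence $\depth_{R_\qq}(k(\pp)) = \infty$. Substituting into the criterion of the previous paragraph, only the constraint at $\qq = \pp$ is non-trivial, and it is precisely $\phi(\pp) \leq i$. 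The main technical obstacle is the Koszul-cohomology identification of higher $\Tor$s in paragraph three; the rest is either formal or a direct application of \cref{p:torpair-generator} and \cref{prop:depth}.
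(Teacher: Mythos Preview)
Your proof is correct. The construction of the generating set and the identification of $\Ccal$ via \cref{p:torpair-generator} match the paper's argument exactly.

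For the \emph{moreover} clause you take a genuinely different route. You identify $\Tor_j^{R_\pp}(S_k,M_\pp)$ with the Koszul cohomology $H^{k-j}(\pp R_\pp;M_\pp)$ directly from the truncated Koszul resolution, and then invoke \cref{prop:depth}. The paper instead argues by syzygies: it uses that $M \in \Ccal_{(1)} \iff \Omega_1 M \in \Ccal$ (an instance of \cref{L:cotorsion-pair-shifts}) together with the classical identity $\depth_{R_\pp}((\Omega_1 M)_\pp) = \min\bigl(\depth_{R_\pp}(M_\pp)+1,\depth(R_\pp)\bigr)$, and then inducts on $i$. Your computation is more explicit and self-contained; the paper's version is shorter because it only needs the already-established description of $\Ccal$. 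One small point worth making explicit in your write-up: the step ``$M \in \Ccal_{(i)}$ translates to $H^m(\pp R_\pp;M_\pp)=0$ for $0 \le m < \phi(\pp)-i$'' tacitly assumes $\Ccal_{(i)} = \Gcal^{\top_{>i}}$ rather than $\Ecal^{\top_{>i}}$. This is true---since $M \in \Ccal_{(i)} \iff \Omega_i M \in \Ccal = \Gcal^\top \iff M \in \Gcal^{\top_{>i}}$ by dimension shifting---but it is precisely the syzygy reduction the paper uses, so it deserves a sentence. For the final claim about $k(\pp)$, your prime-by-prime case analysis is correct and more detailed than the paper, which simply records $\depth_{R_\pp}(k(\pp))=0$ and leaves the vacuity of the constraints at $\qq \neq \pp$ implicit.
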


\begin{proof}
We construct a generating set $\Scal_\phi$ for the $\Tor$-pair, consisting of modules of finite flat dimension in $\Mod R$. For each $\pp \in \Spec R$ we have $\phi(\pp) \leq \depth(\pp)$, so \cref{p:torpair-generator} yields that the module $S_{\phi(\pp)}(\pp R_\pp; R_\pp)$ constructed in \cref{ss:Koszul} is of finite flat dimension over $R$ for each $\pp \in \Spec R$. We let 
\[
\Scal_\phi \coleq \{S_{\phi(\pp)}(\pp R_\pp; R_\pp)\}_{\pp \in \Spec R},
\] 
and let $(\Ecal, \Ccal)^\top$ be the $\Tor$-pair generated by $\Scal_\phi$. Then again by \cref{p:torpair-generator}, we conclude that the modules in $\Ccal_\phi \coleq \Scal_\phi^\top$ are of the desired form.

For the moreover statement, let $M \in \Mod R$ and consider a short exact sequence $0 \to \Omega_1 M \to P \to M \to 0$ with $P$ projective. Then $\depth_{R_\pp}((\Omega_1 M)_\pp) = \min(\depth_{R_\pp}(M_\pp)+1,\depth(\pp))$. Since $P \in \Ccal$, we get $M \in \Ccal_{(1)} \iff \Omega_1 M \in \Ccal$ by \cref{L:cotorsion-pair-shifts}, which yields the desired description of $\Ccal_{(1)}$, then we proceed by induction. The final claim then follows directly as $\depth_{R_\pp}(k(\pp))=0$.
\end{proof}

\begin{rmk}
In the generating set $\Scal_\phi$ constructed above, for $n>0$ we note that 
\[\Scal_\phi \cap \Fcal_n(R) = \{S_{\phi(\pp)}(\pp R_\pp;R_\pp) \mid \phi(\pp)\leq n\}.
\]
However, for $i \geq 0$, the induced $\Tor$-pairs are generated by the following set of $R$-modules of finite flat dimension,   
\[
(\Ecal_\phi)^{\top_{> i}}=(\Ccal_\phi)_{(i)}= \{S_{\phi(\pp)-i}(\pp R_\pp;R_\pp) \mid \phi(\pp)>i\}^\top.
\]
\end{rmk}
In case the flat dimensions of the generators are bounded by 1, the class $\Ccal$ is closed under submodules. In this case, we can readily capture these classes in terms of subsets of $\Spec R$; this will serve as the base step of the coming induction.
For a subset $X$ of $\Spec R$, we will denote by $\Ccal_X$ the class of $R$-modules $\{M \in \Mod R \mid \Ass M \subseteq X\}$  

\begin{lemma}\label{L:CX}
    Let $\Ccal$ be a class of $R$-modules closed under submodules, extensions, and direct limits. Then there is a subset $X$ of $\Spec R$ such that $\Ccal = \{M \in \Mod R \mid \Ass M \subseteq X\}$.
\end{lemma}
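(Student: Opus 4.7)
The natural candidate for the subset is
$$X \coleq \{\pp \in \Spec R \mid R/\pp \in \Ccal\},$$
and the plan is to show $\Ccal = \Ccal_X$ for this choice. The inclusion $\Ccal \subseteq \Ccal_X$ is immediate: given $M \in \Ccal$ and $\pp \in \Ass M$, there is an embedding $R/\pp \hookrightarrow M$, so $R/\pp \in \Ccal$ by closure under submodules, hence $\pp \in X$.

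For the reverse inclusion, the strategy is to show that if $M$ has $\Ass M \subseteq X$, then $E(M) \in \Ccal$, from which $M \in \Ccal$ follows by closure under submodules. Since the minimal injective resolution gives $E(M) \cong \bigoplus_{\pp \in \Ass M} E(R/\pp)^{(\alpha_\pp)}$, and since arbitrary direct sums of modules in $\Ccal$ remain in $\Ccal$ (finite direct sums by closure under extensions, infinite direct sums as directed unions of finite ones), the task reduces to showing that $E(R/\pp) \in \Ccal$ for every $\pp \in X$.

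To reach $E(R/\pp)$ from $R/\pp$, I would pass through the residue field $k(\pp)$ as an intermediate step. First, $k(\pp)$ is a direct limit of copies of $R/\pp$ in $\Mod R$ (namely $k(\pp) = \varinjlim_{s \notin \pp} s^{-1}(R/\pp)$), hence $k(\pp) \in \Ccal$ by closure under direct limits. Next, the injective envelope $E(R/\pp) \cong E_{R_\pp}(k(\pp))$ is $\pp R_\pp$-torsion, so it is the union of the ascending chain $E_n \coleq (0 :_{E(R/\pp)} \pp^n)$, whose successive quotients $E_n/E_{n-1}$ are $k(\pp)$-vector spaces, that is, direct sums of copies of $k(\pp)$. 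By the closure properties above, each $E_n/E_{n-1}$ lies in $\Ccal$, so an induction using closure under extensions shows $E_n \in \Ccal$ for all $n$, and closure under direct limits yields $E(R/\pp) = \varinjlim_n E_n \in \Ccal$.

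The one point that requires a little attention — and which I expect is the main subtlety — is the verification that $E(R/\pp)$ admits the above $k(\pp)$-filtration; this is where the commutative noetherian hypothesis on $R$ is being used essentially, via the structure of injective envelopes (Matlis' theory). Everything else is a fairly mechanical chaining of the three closure properties of $\Ccal$.
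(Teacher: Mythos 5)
Your proof is correct and takes essentially the same route as the paper: pick out the relevant primes, reduce to showing injective envelopes lie in $\Ccal$, and use Matlis' structure theory (first that $k(\pp)$ is a direct limit of copies of $R/\pp$, then that $E(k(\pp))$ has a filtration with layers that are $k(\pp)$-vector spaces). The paper simply cites Matlis for the filtration, whereas you spell it out via the socle filtration $E_n = (0:_{E(R/\pp)}\pp^n)$; your version is slightly more careful (the paper's phrasing ``$k(\pp)\in\Ccal$ \dots as $\Ccal$ is closed under submodules'' elides the direct-limit step from $R/\pp$ to $k(\pp)$, which you make explicit). The two choices of $X$ — yours, $\{\pp \mid R/\pp \in \Ccal\}$, versus the paper's $\bigcup_{M\in\Ccal}\Ass M$ — are easily seen to coincide under the hypotheses.
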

\begin{proof}
    Given $M \in \Ccal$, we have $k(\pp) \in \Ccal$ for any $\pp \in \Ass M$ as $\Ccal$ is closed under submodules. Then also $E(k(\pp)) \in \Ccal$, because $E(k(\pp))$ admits a filtration by coproducts of copies of $k(\pp)$ by Matlis \cite{Mat58}. Put $X = \bigcup_{M \in \Ccal}\Ass M$, then clearly $\Ccal \subseteq \Ccal_X$. To prove the converse, let $N \in \Mod R$ such that $\Ass N \subseteq X$. Then the injective envelope $E(N)$ of $N$ is a coproduct of copies of $E(k(\pp))$ with $\pp \in \Ass N$, again by \cite{Mat58}. Then $E(N) \in \Ccal$, so that also $N \in \Ccal$.
\end{proof}
Finally, we are ready to state and prove our main theorem.
\begin{theorem}\label{T:Torpair-characterisation}
Let $R$ be a commutative noetherian ring. There is a bijective correspondence between the following collections.
\begin{itemize}
		\item[(i)] Hereditary $\Tor$-pairs $(\Ecal, \Ccal)^\top$ generated by classes of modules of finite flat dimension,
		\item[(ii)] Hereditary cotorsion pairs $(\Ccal, \Wcal)$ cogenerated by classes of pure-injective modules of finite injective dimension,
		\item[(iii)] Functions $\phi\colon \Spec R \to \Zbb_{\geq 0}$ such that $\phi \leq \depth$,
		\item[(iv)] Sequences of subsets $X_0 \subseteq \dots \subseteq X_{n} \subseteq \dots \subset \Spec R$ such that $\Ass{\Omega_{-i}(R)} \subseteq X_i$.
	\end{itemize}
 The assignments are as follows:

    $(i) \rightarrow (ii)$: See \cref{tor-induce-cotor}.

    $(iii) \rightarrow (i):$ $\phi \mapsto (\Ecal_\phi,\Ccal_\phi)$, where 
    $$\Ccal_\phi \coleq \{M \in \Mod R \mid \depth_{R_\pp}(M_\pp) \geq \phi(\pp) ~\forall \pp \in \Spec R\}.$$

    $(ii) \rightarrow (iii):$ $\Ccal \mapsto \phi_\Ccal$, where
    $$\phi_\Ccal(\pp) \coleq \inf \{i \in \Zbb_{\geq 0} \mid k(\pp) \in \Ccal_{(i)}\}.$$

    $(ii) \rightarrow (iv):$ $\Ccal \mapsto X_0 \subseteq \dots \subseteq X_{n} \subseteq \dots \subset \Spec R$, where
    $$X_n \coleq \Ass{\Ccal_{(n)}}.$$
\end{theorem}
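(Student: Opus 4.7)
The plan is to establish the chain of bijections by first verifying that the round-trip $(iii) \to (i) \to (ii) \to (iii)$ is the identity. Given $\phi$, the ``in particular'' clause of \cref{function-to-torpair} yields $k(\pp) \in (\Ccal_\phi)_{(i)}$ iff $\phi(\pp) \leq i$, so $\phi_{\Ccal_\phi} = \phi$. The correspondence $(iii) \leftrightarrow (iv)$ is combinatorial, via $X_n \coleq \{\pp : \phi(\pp) \leq n\}$ and $\phi(\pp) \coleq \inf\{n : \pp \in X_n\}$; the condition $\phi \leq \depth$ matches $\Ass \Omega_{-i}(R) \subseteq X_i$ for all $i$ because $\pp \in \Ass \Omega_{-\depth(\pp)}(R)$ always (the top Bass number of $R_\pp$ is nonzero), while $\pp \in \Ass \Omega_{-i}(R)$ forces $i \geq \depth(\pp)$, both extracted by applying \cref{p:torpair-generator}(iii) to $R$. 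The prescribed map $(ii) \to (iv)$ sending $\Ccal$ to $X_n = \Ass \Ccal_{(n)}$ agrees with this, since \cref{l:associated-residue} applied to $(\Ccal_{(n)}, \Wcal_{(n)})$ (cogenerated by $\Omega_{-n}\Pcal$, still pure-injectives of finite injective dimension by \cite[Lemma 6.20]{GT12}) gives $\Ass \Ccal_{(n)} = \{\pp : k(\pp) \in \Ccal_{(n)}\} = \{\pp : \phi_\Ccal(\pp) \leq n\}$.

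The crux of the proof is the identity $\Ccal = \Ccal_{\phi_\Ccal}$ for every cotorsion pair $(\Ccal, \Wcal)$ in $(ii)$. The inclusion $\Ccal \subseteq \Ccal_{\phi_\Ccal}$ I would handle directly: given $M \in \Ccal$, iterate \cref{C:C-inj-env} together with \cref{L:cotorsion-pair-shifts}(i) to see that $\Omega_{-i}(M) \in \Ccal_{(i)}$ for every $i$. Then for each $\pp \in \Ass \Omega_{-i}(M)$, \cref{l:associated-residue} applied to $(\Ccal_{(i)}, \Wcal_{(i)})$ yields $k(\pp) \in \Ccal_{(i)}$, so $\phi_\Ccal(\pp) \leq i$. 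Combined with \cref{p:torpair-generator}(iii), this gives the bound $\depth_{R_\pp}(M_\pp) \geq \phi_\Ccal(\pp)$.

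The main obstacle is the reverse inclusion $\Ccal_{\phi_\Ccal} \subseteq \Ccal$. I plan to reduce to a single cogenerator: since $\Ccal = \bigcap_{P \in \Pcal}\Ccal_P$ (where $\Ccal_P = {}^\perp P$) and a direct computation shows $\phi_\Ccal(\pp) = \sup_{P}\phi_P(\pp)$, whence $\Ccal_{\phi_\Ccal} = \bigcap_P \Ccal_{\phi_P}$, it suffices to prove $\Ccal_{\phi_P} \subseteq \Ccal_P$ for each pure-injective $P$ of finite injective dimension separately. I then induct on $n = \id P$: the base case $n=0$ is immediate since $\Ccal_P = \Mod R$ and $\phi_P \equiv 0$. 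For the inductive step, the shifted cotorsion pair $((\Ccal_P)_{(1)}, (\Wcal_P)_{(1)})$ is cogenerated by $\Omega_{-1}(P)$ of inj dim $n-1$, and by definition $\phi_{(\Ccal_P)_{(1)}}(\pp) = \max(\phi_P(\pp) - 1, 0)$. Given $M \in \Ccal_{\phi_P}$, \cref{C-induction-step}(ii) reduces $M \in \Ccal_P$ to checking (a) $M \in \Sub \Ccal_P$ and (b) $\Omega_{-1}(M) \in (\Ccal_P)_{(1)}$.

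For (a), \cref{C-induction-step}(i) reduces to showing $E(M) \in \Ccal_P$; the Matlis decomposition $E(M) \cong \bigoplus_{\pp \in \Ass M}E(k(\pp))^{(\alpha_\pp)}$ further reduces to $E(k(\pp)) \in \Ccal_P$ for each $\pp \in \Ass M$. For such $\pp$, $\depth_{R_\pp}(M_\pp) = 0$ together with the hypothesis $M \in \Ccal_{\phi_P}$ forces $\phi_P(\pp) = 0$, so $k(\pp) \in \Ccal_P$; the Matlis filtration of $E(k(\pp))$ by copies of $k(\pp)$ combined with closure of $\Ccal_P$ under transfinite extensions and direct limits then gives $E(k(\pp)) \in \Ccal_P$. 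For (b), the inductive hypothesis applied to $((\Ccal_P)_{(1)}, (\Wcal_P)_{(1)})$ reduces the task to verifying $\depth_{R_\pp}((\Omega_{-1}M)_\pp) \geq \max(\phi_P(\pp) - 1, 0)$ for all $\pp$; when $\phi_P(\pp) = 0$ this is trivial, while when $\phi_P(\pp) \geq 1$ the assumption $\depth_{R_\pp}(M_\pp) \geq 1$ together with the long exact sequence of $\Hom_{R_\pp}(k(\pp),-)$ applied to $0 \to M_\pp \to E(M_\pp) \to \Omega_{-1,R_\pp}(M_\pp) \to 0$ yields the standard identity $\depth_{R_\pp}(\Omega_{-1,R_\pp}(M_\pp)) = \depth_{R_\pp}(M_\pp) - 1 \geq \phi_P(\pp) - 1$, as required.
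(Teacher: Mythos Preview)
Your proof is correct and arrives at the same conclusion via a genuinely different route from the paper. Both arguments share the skeleton: verify the round-trip $(iii)\to(i)\to(ii)\to(iii)$ is the identity and then prove $\Ccal=\Ccal_{\phi_\Ccal}$ for any cotorsion pair in $(ii)$, using \cref{C-induction-step} as the inductive engine. The divergence is in how the induction is grounded. The paper first invokes \cref{c:loc-cotor-pair} to reduce to $R$ local, so that $\Ical=\Ical_{\dim R}$ and hence $\Ccal_{(\dim R)}=\Mod R$; it then works downward from this terminal stage, obtaining both inclusions of $\Ccal=\Ccal_\phi$ simultaneously from the characterisation $M\in\Ccal \iff \Ass\Omega_{-i}(M)\subseteq X_i$ for all $i$. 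You instead reduce to a single pure-injective cogenerator $P$ and induct on $\id P$, working upward through the shifted pair $(\Ccal_P)_{(1)}$ cogenerated by $\Omega_{-1}P$; the two inclusions are handled separately, the forward one via \cref{l:associated-residue} and the backward one via the depth drop $\depth_{R_\pp}(\Omega_{-1}M)_\pp=\depth_{R_\pp}(M_\pp)-1$. Your approach has the advantage of avoiding the colocalisation machinery altogether and never passing to the local case, at the cost of a slightly more hands-on verification of the depth shift; the paper's approach packages both inclusions into a single inductive description and leans on the localisation results already established. Both are clean.
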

\begin{proof}
The assignment $(i) \rightarrow (ii)$ of \cref{tor-induce-cotor} is clearly injective. The assignments $(iii) \rightarrow (i)$ and $(ii) \rightarrow (iii)$ are well-defined by \cref{function-to-torpair} and \cref{boundedbydepth}. There is a straightforward bijection $(iii) \rightarrow (iv)$ which assigns $X_n \coleq \bigcup_{0 \leq i \leq n}\phi^{-1}(i)$ for each $n \geq 0$. In this way, $(ii) \rightarrow (iv)$ is identified with $(ii) \rightarrow (iii)$ via \cref{l:associated-residue}.

The composition $(iii) \rightarrow (i) \xhookrightarrow{} (ii) \rightarrow (iii)$ is an identity by \cref{function-to-torpair}. To show that all the involved assignments are bijections, it remains to show that $(ii) \rightarrow (iii)$ is injective. Let $(\Ccal,\Wcal)$ be a cotorsion pair of the collection $(ii)$ and $\phi$ the function $\Spec R \to \Zbb_{\geq 0}$ associated via $(ii) \rightarrow (iii)$. We shall show that $\Ccal = \Ccal_\phi$. 

Using \cref{c:loc-cotor-pair}, we can reduce to the case of $R$ local. Then $\Ical = \Ical_{\dim(R)}$, which implies $\Ccal_{(\dim(R))}=\Mod R$. Since $\Ccal$ is closed under injective envelopes by \cref{C:C-inj-env}, we have $\Sub \Ccal = \Sub {\Ccal \cap \Ical_0}$, where $\Ical_0$ is the class of injective modules. Then it is easy to see that $\Sub \Ccal$ is closed under extensions, and by the argument of \cite[Lemma 5.7]{HS20}, it is also closed under direct limits. It follows by \cref{L:CX} that $\Sub \Ccal=\Ccal_{X_0}$, where again $X_i = \Ass{\Ccal_{(i)}}$ for each $i \geq 0$. Using \cref{C-induction-step}, we get the relation $M \in \Ccal_{(i)}$ if and only if $\Ass{M} \subseteq X_i$ and $\Omega_{-1}(M) \in \Ccal_{(i+1)}$. Since $\Ccal_{(\dim(R))} = \Mod R$, we get by induction that $M \in \Ccal$ if and only if $\Ass {\Omega_{-i}(M)} \in X_i$ for all $i \geq 0$. Then $\Ccal = \Ccal_\phi$ by \cref{p:torpair-generator}.
\end{proof}

    \begin{rmk}
    It follows  that the character duals of the modules in $\Scal_\phi$ defined in Proposition~\ref{function-to-torpair} are cogenerators of the cotorsion pair $(\Ccal_\phi,\Wcal_\phi)$ in Theorem~\ref{T:Torpair-characterisation}. However, it also follows  that all hereditary cotorsion pairs cogenerated by pure-injectives of finite injective dimension can be cogenerated instead by modules which are better-understood: Matlis duals. 
    
     By Proposition~\ref{p:torpair-generator}, $\{S_{\phi(\pp)}(\pp R_\pp; R_\pp)^{v_\pp} \mid S_{\phi(\pp)}(\pp R_\pp; R_\pp) \in \Scal_\phi \}$ also cogenerate the desired cotorsion pair, where $$S_{\phi(\pp)}(\pp R_\pp; R_\pp)^{v_\pp}=\mathrm{Hom}_{R_\pp} (S_{\phi(\pp)}(\pp R_\pp; R_\pp), E(R_{\pp}/\pp R_{\pp})).$$ 
    \end{rmk}


 \medskip

The following is a restriction of our main theorem to cotilting $\Tor$-pairs. This provides a generalization of the classification of n-cotilting classes of \cite{AHPST14} to the case when the corresponding resolving subcategory has no uniform bound on projective dimension, which is relevant in the case $\dim(R) = \infty$, see also \cite[Remark 5.7]{HNS} and references therein for a discussion about $\infty$-cotilting classes relevant to our setting. Note that the bijection $(iii) \leftrightarrow (v)$ is precisely \cite[Theorem 1.2]{DT15} of Dao and Takahashi.

\begin{rmk} \label{grade}
Here, we call a function $\phi\colon \Spec R \to \Zbb_{\geq 0}$ \newterm{order-preserving} if $\phi(\qq) \leq \phi(\pp)$ whenever $\qq \subseteq \pp$. The \newterm{grade function} is the function $\grade: \Spec R \to \Zbb_{\geq 0}$ defined by the rule $\pp \mapsto \grade_R(\pp,R) = \inf \{i \in \Zbb_{\geq 0} \mid \Ext_R^i(R/\pp,R)\neq 0\}$, cf. \S \ref{ss:Koszul}. The relation with the $\depth$ function used in Theorem~\ref{T:Torpair-characterisation} is given by $\grade(\pp) = \inf \{\depth(\qq) \mid \qq \in V(\pp)\}$, see \cite[Proposition 1.2.10]{BH98}. 
In particular, $\grade$ is an order-preserving function such that $\grade \leq \depth$, and we deduce that for any order-preserving function $\phi: \Spec R \to \Zbb_{\geq 0}$ that $\phi \leq \depth$ if and only if $\phi \leq \grade$. \end{rmk}

\begin{cor}\label{T:definable}
    Let $R$ be a commutative noetherian ring. The correspondence of \cref{T:Torpair-characterisation} restricts to another bijective correspondence between the following collections.
    \begin{itemize}
		\item[(i)] Hereditary $\Tor$-pairs $(\Ecal, \Ccal)^\top$ generated by classes of finitely generated modules of finite projective dimension,
		\item[(ii)]Cotilting $\Tor$-pairs $(\Ecal, \Ccal)^\top$,
		\item[(iii)] Order-preserving functions $\phi\colon \Spec R \to \Zbb_{\geq 0}$ such that $\phi \leq \depth$,
		\item[(iv)] Sequences $X_0 \subseteq \dots \subseteq X_{n} \subseteq \dots \subset \Spec R$ of generalization closed subsets such that $\Ass{\Omega_{-i}(R)} \subseteq X_i$.
        \item[(v)] resolving subcategories of $\mod R$ consisting of modules of finite projective dimension.
	\end{itemize}
\end{cor}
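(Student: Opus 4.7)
The plan is to view this corollary as a restriction of \cref{T:Torpair-characterisation}: I will show that each of the additional conditions in (i), (ii), (iv), (v) corresponds under the established bijection precisely to $\phi$ being order-preserving in (iii). Most of the work concentrates in the implication $(ii) \Rightarrow (iii)$.

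\textbf{The direct equivalences.} The equivalence $(iii) \leftrightarrow (iv)$ is immediate from the assignment $X_n = \phi^{-1}\{0,\dots,n\}$ of the main theorem, since $\phi$ is order-preserving if and only if each $X_n$ is generalization-closed. The equivalence $(iii) \leftrightarrow (v)$ is precisely \cite[Theorem 1.2]{DT15} of Dao and Takahashi, once one aligns their bound $\phi \leq \grade$ with ours by means of \cref{grade}: for an order-preserving $\phi$ the two bounds $\phi \leq \depth$ and $\phi \leq \grade$ coincide.

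\textbf{The chain $(iii) \Rightarrow (i) \Rightarrow (ii)$.} For $(iii) \Rightarrow (i)$, I would refine the generating set $\Scal_\phi$ of the main theorem by replacing each local Koszul cosyzygy $S_{\phi(\pp)}(\pp R_\pp; R_\pp)$ by its finitely generated $R$-model $S_{\phi(\pp)}(\pp; R)$. This is a well-defined finitely generated $R$-module of finite projective dimension exactly because $\phi(\pp) \leq \grade(\pp)$ by order-preservation plus \cref{grade}. A direct Koszul computation identifies $\{S_{\phi(\pp)}(\pp; R)\}_{\pp}^\top$ with $\{M \mid \grade_R(\pp, M) \geq \phi(\pp) ~\forall \pp\}$, which, using the formula $\grade_R(\pp, M) = \inf_{\qq \in V(\pp)}\depth_{R_\qq}(M_\qq)$ together with order-preservation of $\phi$, agrees with $\Ccal_\phi$. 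The implication $(i) \Rightarrow (ii)$ is standard: $\Tor$-orthogonals of finitely presented modules are definable, and coupled with finite projective dimension of the generators, \cref{def-tor-pairs} identifies the $\Tor$-pair as cotilting.

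\textbf{The main step $(ii) \Rightarrow (iii)$.} Let $(\Ecal, \Ccal)^\top$ be a cotilting $\Tor$-pair with associated function $\phi$, and fix $\qq \subseteq \pp$ in $\Spec R$; the task is to show $\phi(\qq) \leq \phi(\pp)$. The plan is to localize via \cref{torpair-localisation} and the identifications of \cref{eq:the-eq}: the pair $(\Ecal_\pp, \Ccal_\pp)^\top$ over $R_\pp$ is again a cotilting $\Tor$-pair, with associated function the restriction of $\phi$ to $\Spec R_\pp$. Since $\depth R_\pp < \infty$, the localized pair is $n$-cotilting for $n = \depth R_\pp$, so by \cite[Theorem 4.2]{AHPST14} the class $\Ccal_\pp$ is of cofinite type; applying \cref{cotilting_characterisation} then parametrizes $\Ccal_\pp$ by a descending sequence of torsion-free classes of finite type closed under injective envelopes, equivalently by an ascending sequence of generalization-closed subsets of $\Spec R_\pp$. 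Under the translation of the main theorem this says precisely that $\phi|_{\Spec R_\pp}$ is order-preserving, yielding $\phi(\qq) \leq \phi(\pp)$ as required. The main obstacle is this global-to-local passage, since a cotilting $\Tor$-pair in our sense need not be $n$-cotilting for any uniform $n$ when $\dim R = \infty$; the proposed strategy avoids this by checking order-preservation one pair of comparable primes at a time, reducing each check to a local finite-depth situation where the AHPST14 classification applies cleanly.
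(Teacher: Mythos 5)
Your proof is correct and follows essentially the same route as the paper's: it passes through the finitely generated Koszul cosyzygies $S_{\phi(\pp)}(\pp;R)$ and the grade--depth matching for (iii)$\to$(i)$\to$(ii), and reduces to a local noetherian ring to invoke the cofinite-type classification of cotilting classes for (ii)$\to$(iii). The one small variation is that you localize at a single prime $\pp \supseteq \qq$ to check order-preservation one comparable pair at a time and let the bijection of \cref{T:Torpair-characterisation} handle the round-trip, whereas the paper localizes at all maximal ideals $\mm$ and explicitly verifies $\phi_\mm(\pp R_\mm)=\phi(\pp)$ and $\Ccal = (\Scal'_\phi)^\top$; both versions hinge on the same localization identification from \cref{torpair-localisation}, namely that $k(\qq)\in\Ccal_{(i)}$ can be tested equally well over $R$ or over $R_\pp$.
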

\begin{proof}
    The equivalence between (iii) and (iv) follows directly from definitions. To a function $\phi\colon \Spec R \to \Zbb_{\geq 0}$ as in $(iii)$, we assign the set 
    $$\Scal'_\phi = \{S_{\phi(\pp)}(\pp;R) \mid \pp \in \Spec R\},$$ 
    note that $\Scal'_\phi$ consists of finitely generated modules of finite projective dimension (see \cref{rem:koszul_cos}), because $\phi \leq \grade$ by Remark~\ref{grade}. Let $(\Ecal,\Ccal)^\top$ be the hereditary $\Tor$-pair such that $\Ccal = (\Scal'_\phi)^{\top}$. We have 
    $$\Ccal = \{M \in \Mod R \mid \grade_R(\pp,M) \geq \phi(\pp), \pp \in \Spec R\}$$ 
    by \cite[Proposition 5.12]{HS20}. We claim that this coincides with the $\Tor$-pair associated to $\phi$ via \cref{T:Torpair-characterisation}. Indeed, similarly to the discussion above, this follows directly from \cite[Proposition 1.2.10]{BH98} and the order-preservation of $\phi$.
    Since $\Scal'_\phi$ consists of finitely generated modules, $\Ccal$ is definable, and so $(\Ecal,\Ccal)^{\top}$ is a cotilting $\Tor$-pair. The assignment from $(iii)$ to $(v)$ is by taking the smallest resolving subcategory of $\mod R$ generated by $\Scal'_\phi$.

    If $\dim(R)<\infty$ then we can see that these assignments are bijections directly by \cref{cotilting_characterisation}. It remains to explain the case of infinite Krull dimension. Let $(\Ecal,\Ccal)^{\top}$ be a cotilting $\Tor$-pair. For each maximal ideal $\mm$, $\Ccal_\mm$ is a cotilting class in $\Mod{R_\mm}$ by \cref{def-tor-pairs}. Let $\phi_\mm\colon \Spec {R_\mm} \to \Zbb_{\geq 0}$ be the associated order-preserving function to each $(\Ecal_\mm,\Ccal_\mm)^{\top}$ and let $\phi\colon \Spec R \to \Zbb_{\geq 0}$ be the function associated to $(\Ecal,\Ccal)^{\top}$. Since $\Omega_{-i}k(\pp) \in \Ccal$ if and only if $\Omega_{-i}k(\pp) \in \Ccal_\mm$ for each maximal ideal $\pp \subseteq \mm$, we see that $\phi_\mm(\pp R_\mm) = \phi(\pp)$ for all inclusions $\pp \subseteq \mm$. Then it is straightforward to show that $\Ccal = (\Scal'_\phi)^{\top}$. This establishes the bijection between items $(i),(ii),(iii)$. Finally, items $(i)$ and $(v)$ are in bijection by an argument similar to \cite[Theorem 13.49]{GT12}.
\end{proof}
It is natural to ask to what extent is our \cref{T:Torpair-characterisation} broader then the (essentially) known \cref{T:definable}. In view of the parametrizing functions, this is to ask how strong of an assumption for a function $\phi\colon \Spec R \to \Zbb_{\geq 0}$ bounded by $\depth$ is it to be order-preserving. Using a classical result of Bass, we get the following dichotomy.
\begin{prop}\label{p:dim2}
    Let $R$ be a commutative noetherian ring. Then the following are equivalent:
    \begin{enumerate}
        \item[(i)] every function $\phi\colon \Spec R \to \Zbb_{\geq 0}$ with $\phi \leq \depth$ is order-preserving.
        \item[(ii)] $\dim(R) \leq 1$.
    \end{enumerate}
\end{prop}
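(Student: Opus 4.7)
The plan is to prove both directions; $(ii) \Rightarrow (i)$ is routine, while $(i) \Rightarrow (ii)$ requires exhibiting a non-order-preserving function, which I would obtain via prime avoidance plus Krull's principal ideal theorem.

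For $(ii) \Rightarrow (i)$: if $\dim(R) \leq 1$ and $\qq \subsetneq \pp$ in $\Spec R$, then $\qq$ must be a minimal prime (a strictly longer chain below $\pp$ would violate $\dim(R) \leq 1$), so $R_\qq$ is zero-dimensional and has depth zero. Hence $\phi(\qq) = 0 \leq \phi(\pp)$ for any $\phi \leq \depth$, establishing that $\phi$ is order-preserving.

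For $(i) \Rightarrow (ii)$ I would argue the contrapositive: assuming $\dim(R) \geq 2$, it suffices to produce a non-maximal prime $\qq$ with $\depth(R_\qq) \geq 1$, since then the function $\phi$ defined by $\phi(\qq) = 1$ and $\phi(\pp') = 0$ for all $\pp' \neq \qq$ satisfies $\phi \leq \depth$ but fails order-preservation at any $\pp \supsetneq \qq$. Picking a maximal ideal $\mm$ of height $\geq 2$ and using the identification $R_\qq \cong (R_\mm)_{\qq R_\mm}$ for $\qq \subseteq \mm$, I reduce to the case where $R$ is local of dimension $\geq 2$. Since $\depth(R_\pp) = 0$ if and only if $\pp \in \Ass R$, and $\Ass R$ is finite by a classical result of Bass, the task reduces further to showing that $R$ has infinitely many non-maximal primes.

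This last point is the main obstacle, and I would handle it by prime avoidance: if $\qq_1, \ldots, \qq_k$ were all the non-maximal primes of $R$, then some $x \in \mm \setminus \bigcup_i \qq_i$ would exist, forcing $\mm$ to be the unique prime containing $(x)$, so that $\sqrt{(x)} = \mm$. Krull's principal ideal theorem would then yield $\height \mm \leq 1$, contradicting $\dim(R) \geq 2$. Thus there are infinitely many non-maximal primes of $R$, and all but finitely many avoid $\Ass R$, producing the required $\qq$. Everything else in the argument is bookkeeping around the localisation at $\mm$ and the explicit construction of $\phi$.
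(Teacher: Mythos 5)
Your proof is correct. The $(ii) \Rightarrow (i)$ direction matches the paper's reasoning essentially verbatim (a prime strictly contained in another must be minimal when $\dim R \leq 1$, hence has local depth zero). For $(i) \Rightarrow (ii)$, however, you take a genuinely different route. The paper invokes a theorem of Bass \cite[Proposition 5.2, Corollary 5.3]{B62}, which, for a height-two prime $\qq$, directly furnishes a non-maximal prime $\pp \subsetneq \qq$ with $\depth(R_\pp) = 1$. You instead argue more elementarily: after localising at a maximal ideal of height $\geq 2$, you use the equivalence $\depth(R_\qq)=0 \iff \qq \in \Ass R$, the finiteness of $\Ass R$, and then prove that a local noetherian ring of dimension $\geq 2$ has infinitely many non-maximal primes via prime avoidance together with Krull's principal ideal theorem; a non-maximal prime outside $\Ass R$ then yields the desired counterexample function. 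Your argument is self-contained and avoids Bass's finer depth-existence result, at the cost of a longer chain of steps; the paper's version is shorter but relies on a more specialised black box. One small quibble: the finiteness of $\Ass R$ for a noetherian ring is a standard consequence of primary decomposition rather than a result usually attributed to Bass, but this does not affect the correctness of the argument.
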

\begin{proof}
    $(i) \implies (ii):$ Towards contradiction, assume that $\dim(R)>1$. Let $\qq \in \Spec R$ be a prime of height 2. Then a theorem of Bass \cite[Proposition 5.2, Corollary 5.3]{B62} provides a non-maximal prime ideal $\pp \subseteq \qq$ such that $\depth(\pp)= 1$. Then the function $\phi$ assigning 1 to $\pp$ and 0 to all other primes in $\Spec R$ is bounded by depth and not order-preserving.

    $(ii) \implies (i):$ We have $\phi(\pp) \leq \depth(\pp) \leq \height(\pp)$ for every $\pp \in \Spec R$. Then $\phi(\pp) = 0$ if $\pp$ is a minimal prime and $\phi(\pp) \in \{0,1\}$ if $\pp$ is a maximal prime. Then clearly $\phi$ is order-preserving.
\end{proof}
\begin{rmk}
The $\depth$ function is the largest choice of a parametrizing function in \cref{T:Torpair-characterisation}, and as such corresponds to the largest $\Tor$-pair $(\Ecal_\depth,\Ccal_\depth)^\top$ in the lattice which is subject to our classification. In fact, $\Ccal_\depth$ consists precisely of modules of restricted flat dimension zero, see \cref{s:rfd}. If $R$ is Cohen--Macaulay then $\depth = \grade =\height$ is itself an order-preserving function and thus the induced $\Tor$-pair is cotilting. In general though, the $\depth$ function may fail to be order-preserving. In fact, $\depth$ is order-preserving if and only if $\depth=\grade$ if and only if $R$ is almost Cohen--Macaulay, see \cref{s:rfd} again.

For example, consider a noetherian local ring $R$ with maximal ideal $\mm$ such that $\depth (\mm) = 0$. Then the only function associated to a cotilting $\Tor$-pair by \cref{T:definable} is the constant zero. However, if $\dim(R)>1$ then \cref{p:dim2} yields more depth-bounded functions and thus more hereditary $\Tor$-pairs subject to \cref{T:Torpair-characterisation}, including the one corresponding to the $\depth$ function itself.
\end{rmk}
\section{The regular case}\label{hered-tor-pair-reg-ring}
Let $R$ be a regular ring, that is, every localisation of $R$ at a prime ideal has finite global dimension. We show that \cref{T:Torpair-characterisation} characterises {\it all} hereditary $\Tor$-pairs over $R$, and in fact, all hereditary cotorsion pair cogenerated by pure-injectives.  This follows readily if $R$ is local, or even of finite Krull dimension, but to obtain the general case we need to make the following observation first. Let $\mSpec R$ denote the set of maximal ideals of $R$.

\begin{prop}\label{coloc-orth}
	 Let $R$ be a commutative noetherian ring. Any hereditary cotorsion pair cogenerated by a class $\Pcal$ of pure-injective $R$-modules is cogenerated by the class $\bigcup_{\mm \in \mSpec R}\Pcal^\mm$, where $\Pcal^\mm = \{N^\mm \mid N \in \Pcal\}$ for each maximal ideal $\mm$
\end{prop}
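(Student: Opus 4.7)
The plan is to observe that Proposition~\ref{coloc-orth-pi} already handles the case of a single pure-injective cogenerator, and the statement for a whole class $\Pcal$ is just obtained by taking an intersection.

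First, since $(\Ccal,\Wcal)$ is cogenerated by $\Pcal$, I would write
$$
\Ccal = {}^{\perp_1}\Pcal = \bigcap_{N\in\Pcal} {}^{\perp_1}N.
$$
For each $N\in\Pcal$, which is pure-injective by hypothesis, Proposition~\ref{coloc-orth-pi} applied with $i=1$ states that for every $R$-module $M$,
$$\Ext^1_R(M,N)=0 \iff \Ext^1_R(M,N^{\mm})=0 \text{ for every } \mm \in \mSpec R.$$
Rewriting this as an identity of subcategories gives ${}^{\perp_1}N = \bigcap_{\mm \in \mSpec R}{}^{\perp_1}(N^{\mm})$.

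Intersecting over all $N\in\Pcal$, I obtain
$$
\Ccal \;=\; \bigcap_{N\in\Pcal}\bigcap_{\mm\in\mSpec R} {}^{\perp_1}(N^{\mm}) \;=\; {}^{\perp_1}\!\Bigl(\bigcup_{\mm\in\mSpec R}\Pcal^{\mm}\Bigr),
$$
which is exactly the statement that the cotorsion pair $(\Ccal,\Wcal)$ is cogenerated by the colocalised class $\bigcup_{\mm\in\mSpec R}\Pcal^{\mm}$. Note that each $N^{\mm}$ is indeed pure-injective as an $R_{\mm}$-module, hence also as an $R$-module via restriction of scalars along the flat map $R\to R_{\mm}$, as recalled in the bullet list preceding Lemma~\ref{cotorpair-localisation}, so the new cogenerating class still lies inside the class of pure-injective modules.

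I do not anticipate a real obstacle, since the content of the proposition is a pointwise application of Proposition~\ref{coloc-orth-pi}; the only subtlety worth commenting on is that the hereditary property of $(\Ccal,\Wcal)$ plays no role in the argument beyond the fact that the cotorsion pair is uniquely determined by its left-hand class $\Ccal$, which we have shown to coincide with ${}^{\perp_1}\bigl(\bigcup_{\mm}\Pcal^{\mm}\bigr)$.
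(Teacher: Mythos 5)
Your proof is correct and follows the same route as the paper, which simply records that the proposition is an immediate consequence of Proposition~\ref{coloc-orth-pi}: you apply it for each $N\in\Pcal$ and intersect, which is exactly what ``follows directly'' unpacks to.
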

\begin{proof}
	Follows directly from \cref{coloc-orth-pi}.
 \end{proof}

\begin{cor}\label{fininj-reg}
    Let $R$ be a regular ring. Any hereditary cotorsion pair cogenerated by pure-injective modules is cogenerated by pure-injective modules of finite injective dimension.
\end{cor}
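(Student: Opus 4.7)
The plan is to use \cref{coloc-orth} to replace the given cogenerating class $\Pcal$ of pure-injectives by the class $\Pcal' \coleq \bigcup_{\mm \in \mSpec R} \Pcal^\mm$, and then to show that every member of $\Pcal'$ is a pure-injective $R$-module of finite injective dimension. The pure-injectivity of each $N^\mm = \Hom_R(R_\mm, N)$ is already recorded in \cref{ss:cotorsion-pairs} as one of the basic properties of colocalisation, so the only nontrivial task is to bound $\id_R N^\mm$.

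For this bound, I would fix a maximal ideal $\mm$ and note that $N^\mm$ carries a canonical $R_\mm$-module structure. Since $R$ is regular, the local ring $R_\mm$ has finite global dimension (equal to $\dim R_\mm$), so $N^\mm$ admits an injective resolution of length at most $\dim R_\mm$ in $\Mod{R_\mm}$. Hence $N^\mm$ has finite injective dimension as an $R_\mm$-module. To transfer this bound to $R$, I would use that $R \to R_\mm$ is a flat ring epimorphism: for any $R$-module $M$, the adjunction and flatness give
\[
\Ext^i_R(M, N^\mm) \cong \Ext^i_{R_\mm}(M_\mm, N^\mm).
\]
Equivalently, the restriction of scalars along $R \to R_\mm$ preserves injectives (because $\Hom_R(-, E) \cong \Hom_{R_\mm}(-\otimes_R R_\mm, E)$ is exact when $E$ is $R_\mm$-injective, using flatness of $R_\mm/R$). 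Either viewpoint yields $\id_R N^\mm \leq \id_{R_\mm} N^\mm \leq \dim R_\mm < \infty$.

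Combining these observations, $\Pcal' \subseteq \Ical \cap \Pcal\Ical_0$, and by \cref{coloc-orth} the original cotorsion pair is cogenerated by $\Pcal'$. I do not foresee a real obstacle; the argument is a direct splicing of \cref{coloc-orth} with the regularity of the localisations. One small subtlety to highlight is that the bound $\id_R N^\mm \leq \dim R_\mm$ varies with $\mm$, so $\Pcal'$ need not have a uniform bound on injective dimension — but the statement only asks for each cogenerator to have \emph{finite} injective dimension, so no uniformity is required.
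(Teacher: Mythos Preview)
Your proposal is correct and follows essentially the same route as the paper's proof: invoke \cref{coloc-orth} to replace the cogenerating class $\Pcal$ by $\bigcup_{\mm}\Pcal^\mm$, and then observe that each $N^\mm$ is a pure-injective $R$-module with $\id_R N^\mm \leq \id_{R_\mm} N^\mm \leq \dim(R_\mm) < \infty$ by regularity. The paper's proof is a single sentence stating exactly this; you have simply unpacked the $\Ext$-isomorphism $\Ext^i_R(M,N^\mm) \cong \Ext^i_{R_\mm}(M_\mm,N^\mm)$ that underlies the injective-dimension bound (this is the third bullet in the colocalisation subsection, not \S\ref{ss:cotorsion-pairs}).
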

\begin{proof}
    This follows directly from \cref{coloc-orth} as $N^\mm$ is a pure-injective $R$-module whose injective dimension over both $R_\mm$ and $R$ cannot exceed $\dim(R_\mm)$.
\end{proof}
Note that if $R$ is regular then every hereditary $\Tor$-pair is generated by modules of finite flat dimension. Indeed, if a hereditary $\Tor$-pair $(\Ecal,\Ccal)^\top$ is generated by a class $\Gcal$ of modules of finite flat dimension, then it is also generated by the class $\bigcup_{\mm \in \mSpec R}\Gcal_\mm$, where $\Gcal_\mm = \{G_\mm \mid G \in \Gcal\}$. Unlike \cref{coloc-orth}, this follows easily by noting that $\Tor_i^R(M,G) = 0$ if and only if $0 = \Tor_i^R(M,G)_\mm \cong \Tor_i^{R_\mm}(M,G_\mm)$ for each $\mm \in \mSpec R$.

In the regular case, our \cref{T:Torpair-characterisation} now allows the following formulation. Recall that over a regular (or more generally, Cohen--Macaulay) ring $R$, we have $\height (\pp) = \grade(\pp) = \depth(\pp)$ for all $\pp \in \Spec R$.  
\begin{theorem}\label{T:Torpair-characterisation-regular}
Let $R$ be a regular commutative noetherian ring. There is a bijective correspondence between the following collections.
\begin{itemize}
		\item[(i)] Hereditary $\Tor$-pairs $(\Ecal, \Ccal)^\top$,
		\item[(ii)] Hereditary cotorsion pairs $(\Ccal, \Wcal)$ cogenerated by classes of pure-injective modules,
		\item[(iii)] Functions $\phi\colon \Spec R \to \Zbb_{\geq 0}$ such that $\phi \leq \depth$,
		\item[(iv)] Sequences $X_0 \subseteq \dots \subseteq X_{n} \subseteq \dots \subseteq \Spec R$ such that $\Ass{\Omega_{-i}(R)} \subseteq X_i$.
	\end{itemize}
\end{theorem}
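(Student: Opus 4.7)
The plan is to deduce Theorem~\ref{T:Torpair-characterisation-regular} directly from the main \cref{T:Torpair-characterisation}. That theorem already provides the bijection between items $(iii)$, $(iv)$, and the \emph{restricted} versions of $(i)$ and $(ii)$, namely hereditary $\Tor$-pairs generated by modules of finite flat dimension and hereditary cotorsion pairs cogenerated by pure-injectives of finite injective dimension. So the task reduces to removing the two finiteness restrictions in the regular setting.

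For the cotorsion pair side, the finite-injective-dimension restriction is removed precisely by \cref{fininj-reg}. Indeed, given any hereditary cotorsion pair cogenerated by pure-injectives, \cref{coloc-orth} allows us to replace the cogenerating class by its colocalisations $\{N^\mm : \mm \in \mSpec R\}$ at maximal ideals; each $N^\mm$ is a pure-injective $R_\mm$-module whose injective dimension over $R_\mm$ (and hence over $R$) is bounded by $\gldim(R_\mm)=\dim(R_\mm)<\infty$.

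For the $\Tor$-pair side, I would show that every hereditary $\Tor$-pair $(\Ecal,\Ccal)^\top$ is generated by modules of finite flat dimension. The candidate generating class is
\[
\Gcal \coleq \{M_\mm : M \in \Ecal,\ \mm \in \mSpec R\}.
\]
Since $R$ is regular, each $M_\mm$ has projective dimension at most $\gldim(R_\mm)<\infty$ over $R_\mm$; as $R_\mm$ is $R$-flat, flat $R_\mm$-modules are flat $R$-modules, and so $\fd_R(M_\mm)<\infty$. The equality $\Gcal^\top = \Ccal$ follows from the local-global chain of equivalences
\[
\Tor_i^R(M,X)=0 \iff \Tor_i^R(M,X)_\mm=0\ \text{for all } \mm \iff \Tor_i^{R}(M_\mm,X)=0\ \text{for all } \mm,
\]
using flatness of $R \to R_\mm$. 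Hence $(\Ecal,\Ccal)^\top$ equals the hereditary $\Tor$-pair generated by $\Gcal$, which consists of modules of finite flat dimension.

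The main theorem then identifies the two resulting collections with the functions in $(iii)$ and the sequences in $(iv)$, with the same explicit assignments. I expect no real obstacle here, since both reductions are essentially just a localisation argument combined with regularity; the only thing to double-check is that the cogenerating classes produced in \cref{coloc-orth} truly detect the same left-orthogonal, which is guaranteed by \cref{coloc-orth-pi}.
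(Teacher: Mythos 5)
Your proposal is correct and follows essentially the same route the paper takes: reduce to \cref{T:Torpair-characterisation} by removing the two finiteness restrictions via localisation at maximal ideals --- using \cref{coloc-orth}/\cref{fininj-reg} for the cotorsion side, and the observation that $\Tor_i^R(M,X)=0$ iff $\Tor_i^{R_\mm}(M_\mm,X_\mm)=0$ for all $\mm$ (together with $\fd_R M_\mm < \infty$ by regularity) for the $\Tor$-pair side. The paper presents the $\Tor$-pair reduction as the short paragraph preceding the theorem statement, but the content is identical to yours.
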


\begin{rmk}
    Note that, even in the regular case above, the theory of pure-injective modules can be far from transparent. In fact, outside of the hereditary case of dimension at most one, such rings always admit a superdecomposable pure-injective module, see \cite[Summary 8.71]{JL89}.
\end{rmk}

The collection (i) of \cref{T:Torpair-characterisation-regular} has an obvious symmetry: Given a hereditary $\Tor$-pair $(\Ecal, \Ccal)^\top$, also $(\Ccal, \Ecal)^\top$ is a hereditary $\Tor$-pair. This is reflected by the corresponding functions in the following way. 
\begin{prop}
If $\phi$ is the function $\Spec R \to \Zbb_{\geq 0}$ associated to $(\Ecal, \Ccal)^\top$ in \cref{T:Torpair-characterisation-regular}, then we claim that the function associated to $(\Ccal, \Ecal)^\top$ is exactly $\psi(\pp) \coleq \height (\pp) - \phi(\pp)$.
\end{prop}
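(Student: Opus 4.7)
The plan is to reduce to the local case at a given prime $\pp$, then invoke Koszul self-duality over a regular local ring to convert Tor-vanishing conditions for $k(\pp)$ into $\Ext$-depth conditions. By \cref{torpair-localisation}, localizing the $\Tor$-pair $(\Ccal,\Ecal)^\top$ at $\pp$ yields the $\Tor$-pair $(\Ccal_\pp,\Ecal_\pp)^\top$ over $R_\pp$, and $\Tor^R_j(C,k(\pp))\cong \Tor^{R_\pp}_j(C_\pp,k(\pp))$, so computing $\psi(\pp)$ from either ring gives the same value. This reduces the problem to a regular local ring $(R,\mm,k)$ of dimension $d=\height(\mm)$, in which case one must show $\psi(\mm)=d-\phi(\mm)$.

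The second ingredient is Koszul self-duality. Since $R$ is regular local of dimension $d$, a regular system of parameters $x_1,\dots,x_d$ generates $\mm$, and the associated Koszul complex $K_\bullet(\mathbf{x})$ is the minimal free resolution of $k$. Being a bounded complex of finitely generated free modules which is self-dual up to a shift by $d$, it yields for any $R$-module $M$ natural isomorphisms
$$\Tor_j^R(M,k)\cong \Ext_R^{d-j}(k,M),\qquad j\in\Zbb.$$
By definition, $\psi(\mm)$ is the smallest $i\geq 0$ such that $\Tor_j^R(C,k)=0$ for every $C\in\Ccal$ and $j>i$. Using the displayed identity, this is equivalent to $\Ext^l_R(k,C)=0$ for every $C\in\Ccal$ and $l<d-i$, that is, $\depth C\geq d-i$ for every $C\in\Ccal$. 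Hence $\psi(\mm)=d-\inf_{C\in\Ccal}\depth C$.

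The main obstacle is thus to establish $\inf_{C\in\Ccal}\depth C=\phi(\mm)$. The inequality $\geq$ follows directly from the depth-based description $\Ccal=\{M\mid \depth_{R_\pp} M_\pp\geq \phi(\pp),\ \forall \pp\}$ of \cref{function-to-torpair}, specialized at $\pp=\mm$. For the reverse inequality, the delicate step is to produce an explicit witness $C\in\Ccal$ attaining depth equal to $\phi(\mm)$; the candidate is $C\coleq \Omega_{\phi(\mm)}(k)$, the $\phi(\mm)$-th syzygy of $k$ in a minimal free resolution. Since $\pd_R k=d$, the module $C$ has projective dimension $d-\phi(\mm)$, and therefore depth exactly $\phi(\mm)$ at $\mm$ by Auslander--Buchsbaum. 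Away from $\mm$ we have $k_\pp=0$, so the localized resolution is an exact complex of free $R_\pp$-modules; consequently $C_\pp$ is a free $R_\pp$-module, so $\depth C_\pp=\height(\pp)\geq \phi(\pp)$, where the last inequality uses $\phi\leq\depth=\height$ in the regular case. Thus $C\in\Ccal$, yielding $\inf_{C\in\Ccal}\depth C\leq \phi(\mm)$ and hence $\psi(\mm)=d-\phi(\mm)=\height(\mm)-\phi(\mm)$.
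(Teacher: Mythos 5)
Your proposal is correct. It reaches the same conclusion as the paper and ultimately hinges on the same object --- the $\phi(\pp)$-th syzygy of $k(\pp)$ in its minimal free resolution, which in the paper's notation is the Koszul cocycle $S_{\height(\pp)-\phi(\pp)}(\pp R_\pp; R_\pp)$ --- but the logical packaging is different. The paper works directly with the cocycle modules $S_j(\pp R_\pp; R_\pp)$: it observes that over the regular local ring $R_\pp$ these are the syzygies $\Omega_{\height(\pp)-j}(k(\pp))$, computes $\depth_{R_\pp} S_j = \height(\pp)-j$, and reads off which $S_j$ lie in $\Ecal$ (namely $S_{\phi(\pp)}$, by construction of $\Scal_\phi$) and which lie in $\Ccal$ (those with $j \leq \height(\pp)-\phi(\pp)$, via the depth description of $\Ccal$); the self-duality of the Koszul complex is implicit in identifying the cocycle with a syzygy. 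You instead make the self-duality the explicit engine: after localizing at $\pp$ via \cref{torpair-localisation}, you invoke the isomorphism $\Tor_j^{R_\pp}(C,k(\pp))\cong\Ext_{R_\pp}^{d-j}(k(\pp),C)$ to convert the defining $\Tor$-vanishing condition for $\psi(\pp)$ into the statement $\psi(\pp)=d-\inf_{C\in\Ccal}\depth C$, derive the inequality ``$\geq$'' from the depth description of $\Ccal$ given by \cref{function-to-torpair}, and produce the witness $\Omega_{\phi(\pp)}(k(\pp))$, verified to lie in $\Ccal$ by Auslander--Buchsbaum at $\pp$ and by freeness of the localized syzygies at primes $\qq\subsetneq\pp$. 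Your version has the advantage of stating the Koszul/local duality isomorphism and the role of Auslander--Buchsbaum explicitly rather than folding them into properties of the $S_j$, and it makes both inequalities visible; the paper's version is terser because it leans on the already-established depth descriptions of both $\Ecal$ and $\Ccal$ from \cref{T:Torpair-characterisation}. Both arguments are complete and correct.
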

\begin{proof}
To see the claim, first note that as $\Ecal$ is hereditary, so is $\Ccal$, and by construction $S_{\phi(\pp)}(\pp R_\pp; R_\pp) \in \Ecal$. The Koszul complex $K_\bullet (\pp R_\pp; R_\pp)$ is a projective resolution of $R_\pp$ as $R_\pp$ is a regular ring by assumption, so $S_{j}(\pp R_\pp; R_\pp)$ is a $(\height (\pp)-j)$-syzygy of $k(\pp)$ for every $0 \leq j \leq \height( \pp)$. In particular, $\depth_{R_\pp} S_{j}(\pp R_\pp; R_\pp) = \height(\pp)-j$, so it follows from the characterisation in \cref{p:torpair-generator} that $S_{j}(\pp R_\pp; R_\pp) \in \Ccal$ for $j \leq \height(\pp)-\phi(\pp)$. So the associated function to the hereditary bounded $\Tor$-pair $(\Ccal, \Ecal)^\top$ is exactly $\height(\pp)-\phi(\pp)$, as claimed. 
\end{proof}
\begin{rmk}\label{opposite-tor}
    Let $R$ be a commutative noetherian ring and $(\Ecal,\Ccal)^\top$ a hereditary $\Tor$-pair generated by a class of modules of finite flat dimension. If $R$ is singular then the opposite $\Tor$-pair $(\Ccal,\Ecal)^\top$ is never subject to the classification \cref{T:Torpair-characterisation}. Indeed, towards contradiction assume that both $(\Ccal,\Ecal)^\top$ and $(\Ecal,\Ccal)^{\top}$ are generated by modules of finite flat dimension. Passing to a localisation, we can assume that $R$ is a local singular ring. Then $\Ecal,\Ccal \subseteq \Fcal_d$, where $d = \dim R$. This is a contradiction, as the singularity ensures that there is a module $M$ such that $M \in \Fcal^{\top}$ but $M \not\in \Fcal$. Indeed, we can choose $M$ as a cocycle module in an acyclic, but not contractible, complex of projective $R$-modules.
\end{rmk}
\begin{prop}\label{both-definable}
    Let $R$ be a regular commutative noetherian ring. The bijection of \cref{T:Torpair-characterisation-regular} restricts to another bijection between:
    \begin{enumerate}
        \item[(i)] $\Tor$-pairs $(\Ecal,\Ccal)^\top$ such that both $\Ecal$ and $\Ccal$ are definable,
        \item[(ii)] functions $\phi\colon \Spec R \to \Zbb_{\geq 0}$ satisfying:
        \begin{itemize}
            \item $\phi(\qq)=0$ for any minimal prime $\qq$, and
            \item $0 \leq \phi(\pp)-\phi(\qq) \leq 1$ for all immediate inclusions $\qq \subsetneq \pp$.
        \end{itemize}
    \end{enumerate}
\end{prop}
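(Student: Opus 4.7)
The plan is to extract the conditions in (ii) as the conjunction of two order-preservation statements: one for $\phi$ itself, coming from the definability of $\Ccal$, and one for the function $\psi$ associated to the opposite $\Tor$-pair, coming from the definability of $\Ecal$. Regularity of $R$ is what ensures the opposite $\Tor$-pair still sits inside the classification framework, and catenarity is what converts the ``second'' order-preservation condition into the upper bound $\phi(\pp)-\phi(\qq) \leq 1$.

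Concretely, I would first invoke \cref{T:definable}: the class $\Ccal = \Ccal_\phi$ is definable if and only if $(\Ecal_\phi,\Ccal_\phi)^\top$ is a cotilting $\Tor$-pair, if and only if $\phi$ is order-preserving. Next, I would observe that over a regular ring every hereditary $\Tor$-pair is generated by modules of finite flat dimension (see the remark immediately after \cref{fininj-reg}), so the opposite pair $(\Ccal,\Ecal)^\top$ also belongs to the collection (i) of \cref{T:Torpair-characterisation-regular}. By the symmetry proposition just proven, its associated function is $\psi(\pp)=\height(\pp)-\phi(\pp)$. Applying \cref{T:definable} again, $\Ecal$ is definable if and only if $\psi$ is order-preserving.

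Now I would use that $R$ is regular, hence locally Cohen--Macaulay and catenary, and that for each prime $\pp$ the ring $R_\pp$ is a regular local ring of finite dimension. In particular $\depth(\pp)=\height(\pp)=\dim R_\pp$ is finite for every $\pp$, so between any two comparable primes there is a finite saturated chain. Therefore order-preservation of a function on $(\Spec R,\subseteq)$ is detected on immediate inclusions $\qq \subsetneq \pp$, for which catenarity gives $\height(\pp)-\height(\qq)=1$. Translating, $\psi$ is order-preserving if and only if $\phi(\pp)-\phi(\qq)\leq 1$ for every immediate inclusion $\qq \subsetneq \pp$, while $\phi$ is order-preserving if and only if $\phi(\pp)-\phi(\qq)\geq 0$ for every such inclusion. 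Combining these gives exactly the second bullet of (ii).

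Finally, the first bullet $\phi(\qq)=0$ for minimal primes is redundant: if $\qq$ is minimal in $\Spec R$ with $R$ regular, then $R_\qq$ is a field and so $\depth(\qq)=0$, forcing $\phi(\qq)=0$ from the standing assumption $\phi\leq\depth$. The main point to take care of is the second step, where one must be sure that the opposite $\Tor$-pair genuinely falls within the hypotheses of \cref{T:definable}; this is exactly where regularity (as opposed to mere Cohen--Macaulayness or boundedness of Krull dimension) is being used, cf.\ \cref{opposite-tor} for the singular obstruction.
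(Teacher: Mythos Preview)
Your approach is essentially the same as the paper's: reduce to the criterion that both $\phi$ and $\psi=\height-\phi$ are order-preserving via \cref{T:definable} and the symmetry proposition, and then unwind this on immediate inclusions using catenarity.

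One small correction: you say the first bullet $\phi(\qq)=0$ for minimal $\qq$ is ``redundant from the standing assumption $\phi\leq\depth$''. That is fine in the direction $(i)\Rightarrow(ii)$, where $\phi$ arises from a $\Tor$-pair and hence $\phi\leq\depth$ is already known. But in the direction $(ii)\Rightarrow(i)$ the functions in (ii) are not \emph{a priori} bounded by $\depth$, and the first bullet is exactly what lets you deduce $\phi(\pp)\leq\height(\pp)=\depth(\pp)$ by walking up a saturated chain from a minimal prime (this is the content of the paper's first paragraph). Without it, a constant function like $\phi\equiv 5$ would satisfy the second bullet vacuously but not lie in the image of the original bijection. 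So the first bullet is not redundant; rather, it is automatic in one direction and essential in the other.
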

\begin{proof}
    First, notice that any function $\phi$ as in $(ii)$ satisfies $0 \leq \phi(\pp) \leq \height(\pp)$, and so corresponds to a hereditary $\Tor$-pair via \cref{T:Torpair-characterisation-regular}. Indeed, given any $\pp \in \Spec R$, consider a chain of immediate inclusions $\qq = \qq_0 \subsetneq \qq_1 \subsetneq \cdots \subsetneq \qq_n = \pp$, where $\qq$ is some minimal prime ideal. Then the assumption on $\phi$ yields $0 = \phi(\qq) \leq \phi(\pp) \leq n - \phi(\qq) = n$, and $n = \height(\pp)$ as $R$ is catenary.

    In view of \cref{T:definable}, it remains to check that the functions of (ii) are precisely those functions $\phi\colon \Spec R \to \Zbb_{\geq 0}$ of \cref{T:Torpair-characterisation-regular} such that both $\phi$ and $\psi = \height - \phi$ are order-preserving. Assume that $\phi$ is order-preserving. Then $\psi$ is order preserving precisely if for each immediate inclusion $\qq \subsetneq \pp$ of primes we have $\psi(\qq) - \psi(\pp) \geq 0$. We have
    $$\psi(\qq) - \psi(\pp) = (\height(\qq)-\height(\pp)) - (\phi(\qq)-\phi(\pp)) = 1 - (\phi(\qq)-\phi(\pp)),$$
    which shows that indeed $\psi(\qq) - \psi(\pp) \geq 0$ if and only if $\phi(\pp)-\phi(\qq) \leq 1$.
\end{proof}
\begin{cor}{\cite[Theorem 16.31]{GT12}}
    Let $R$ be a regular commutative noetherian ring of Krull dimension 1 (= a hereditary commutative noetherian ring). Then given any $\Tor$-pair $(\Ecal,\Ccal)^\top$, both $\Ecal$ and $\Ccal$ are definable, and thus 1-cotilting classes.
\end{cor}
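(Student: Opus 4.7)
The plan is to reduce to the classification of \cref{T:Torpair-characterisation-regular} and then verify the numerical conditions of \cref{both-definable} automatically in Krull dimension one. First I would observe that since $R$ is hereditary, every $R$-module has flat dimension at most $1$, so $\Tor_i^R(-,-) = 0$ for $i \geq 2$. Consequently, every $\Tor$-pair $(\Ecal,\Ccal)^\top$ is automatically hereditary, and both $\Ecal$ and $\Ccal$ are contained in $\Fcal_1(R)$.

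Next I would invoke \cref{T:Torpair-characterisation-regular} to obtain a function $\phi\colon \Spec R \to \Zbb_{\geq 0}$ with $\phi \leq \depth$ corresponding to $(\Ecal,\Ccal)^\top$. Since $R$ is regular of Krull dimension one, every prime is either minimal or maximal, and $\depth(\qq) = \height(\qq) \in \{0,1\}$, with minimal primes having depth zero. I would then verify the two bulleted conditions of \cref{both-definable}: for any minimal prime $\qq$, the bound $\phi(\qq) \leq \depth(\qq) = 0$ forces $\phi(\qq) = 0$; for any immediate inclusion $\qq \subsetneq \pp$ (which must have $\qq$ minimal and $\pp$ maximal of height one), we get $\phi(\pp) - \phi(\qq) = \phi(\pp) - 0 \leq \depth(\pp) = 1$. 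Both conditions hold for free.

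Applying \cref{both-definable} thus yields that both $\Ecal$ and $\Ccal$ are definable classes. Finally, \cref{def-tor-pairs} upgrades each of them to a $1$-cotilting class: indeed, $(\Ecal,\Ccal)^\top$ is a hereditary $\Tor$-pair with $\Ecal \subseteq \Fcal_1$ and $\Ccal$ definable, so $\Ccal$ is $1$-cotilting, and symmetrically the pair $(\Ccal,\Ecal)^\top$ of \cref{ss:Tor-pairs} is hereditary with $\Ccal \subseteq \Fcal_1$ and $\Ecal$ definable, so $\Ecal$ is $1$-cotilting as well.

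No step is a serious obstacle here: the only content is that the numerical constraints of \cref{both-definable} are vacuous in Krull dimension one, so the full classification collapses onto the cotilting sub-lattice. The slight care needed is in ensuring that the pair $(\Ccal,\Ecal)^\top$ is accessible via \cref{def-tor-pairs} as well, which is immediate because over a hereditary ring every class lies in $\Fcal_1$.
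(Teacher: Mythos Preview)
Your proposal is correct and follows essentially the same approach as the paper: both arguments reduce to checking that the numerical conditions of \cref{both-definable} are automatically satisfied when $\dim(R)\leq 1$. The paper's proof is terser and omits the preliminary observation that every $\Tor$-pair is hereditary in this setting, as well as the explicit deduction of the $1$-cotilting property from \cref{def-tor-pairs}; your version spells these out, which is harmless and arguably clearer.
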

\begin{proof}
    Let $\phi\colon \Spec R \to \Zbb_{\geq 0}$ be a function satisfying $0 \leq \phi(\pp) \leq \height(\pp)$. Then the function is determined by values on maximal ideals $\mm$, with the only possible values being 0 or 1. Such a function automatically satisfies the condition (ii) of \cref{both-definable}.
\end{proof}

\section{The restricted flat dimensions}\label{s:rfd}
Let $R$ be a commutative noetherian ring. Christensen, Foxby and Frankild introduced the restricted homological dimensions over a commutative noetherian ring in various works, see \cite{CFF02} and \cite[Chapter 5]{C00}. Instead of considering a $\Tor$-pair, their approach involved introducing the \newterm{large and small restricted flat dimensions} of a module, denoted $\Rfd$ and $\rfd$ respectively (also called the large and small restricted $\Tor$-dimensions). They are defined as follows for an $R$-module $M$ over a commutative noetherian ring $R$. 
\[
\Rfd_R(M)\coleq \sup \{i \geq 0  \mid \Tor^R_i(F, M) \neq 0 \text{ for some } F \in \Fcal\}
\]
\[
\rfd_R(M)\coleq \sup \{i \geq 0  \mid \Tor^R_i(F, M) \neq 0 \text{ for some } F \in \Fcal^{<\aleph_0}\}
\]

We will let $\Rcal\Fcal_n$ denote the modules of large restricted flat dimension at most $n$ and also let $(\Fcal_{(n)}, \Rcal\Fcal_n)^\top$ be the hereditary $\Tor$-pair, which is generated by $n$-th yokes of modules belonging to $\Fcal$. Note that this $\Tor$ pair corresponds to the function $\pp \mapsto \sup\{0,\depth(\pp)-n\}$ on $\Spec R$ via \cref{T:Torpair-characterisation}, see \cref{function-to-torpair}.

In \cite{CFF02}, the authors succeed in providing a kind of analogue of the Auslander-Buchsbaum formula for the restricted flat dimensions, see \cite[Theorem 2.4(b)]{CFF02}, which we recover in our main \cref{T:Torpair-characterisation} by considering $\phi\coleq\depth$. 

It is straightforward to see that $\rfd_R(M) \leq \Rfd_R(M) \leq \fd_R(M)$ for any $R$-module $M$. However, when each of the equalities hold is less straightforward, see \cite{CFF02} for more details. In particular, they show that if $R$ is a local ring, then $\rfd_R(M) = \Rfd_R(M)$ for every (finitely generated) $R$-module $M$ if and only if $R$ is \newterm{almost Cohen--Macaulay}, that is if $\dim R_\pp-\depth(R_\pp) \leq 1$ for every prime ideal $\pp$, see \cite[Theorem 3.2]{CFF02}. Note that $R$ is almost Cohen--Macaulay if and only if $\grade = \depth$ by \cite[Lemma 3.1]{CFF02}.

Combining some results with \cref{def-tor-pairs} of \cite[Theorem 2.3]{AHT04}, one can extend this to all noetherian rings. The equivalence of (iii) and (iv) is generalised in our \cref{T:definable}.

\begin{prop}
    Let $R$ be a commutative noetherian ring. Then the following are equivalent. \begin{enumerate}
        \item[(i)] $R$ is almost Cohen--Macaulay
        \item[(ii)] $\rfd_R(M) = \Rfd_R(M)$ for every (finitely generated) $R$-module $M$.
        \item[(iii)] $\Rcal \Fcal _0$ is a definable class, that is, $ \Rcal \Fcal_0$ forms the right-hand class of a cotilting $\Tor$-pair generated by $\Pcal^{<\aleph_0}(R)$.
        \item[(iv)] $\Fcal(R) \subseteq \varinjlim \Pcal^{<\aleph_0}(R)$, where the equality holds if and only if $R$ is of finite dimension.
        \item[(v)] $\Fcal_n(R) = \varinjlim \Pcal^{<\aleph_0}_n(R)$ for every $n>0$.
        \item[(vi)] $\Fcal_n(R_\pp) = \varinjlim \Pcal^{<\aleph_0}_n(R_\pp)$ for every $n>0$ for every $\pp \in \Spec R$.
    \end{enumerate}
\end{prop}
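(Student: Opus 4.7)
The strategy is to reduce everything to whether the depth function $\depth\colon \Spec R \to \Zbb_{\geq 0}$ is order-preserving, then apply \cref{T:Torpair-characterisation}, \cref{T:definable}, and the higher-dimensional Govorov--Lazard theorem of~\cite{HLG24}. The key preliminary observation is that the hereditary $\Tor$-pair $(\Fcal_{(0)},\Rcal\Fcal_0)^\top$ generated by $\Fcal$ corresponds under \cref{T:Torpair-characterisation} to the depth function itself. Indeed, the formula
\[
\Rfd_R(M)=\sup_{\pp \in \Spec R}\{\depth R_\pp - \depth_{R_\pp}(M_\pp)\}
\]
from~\cite[Theorem 2.4(b)]{CFF02} yields $\Rcal\Fcal_0=\Ccal_\depth$ in the notation of \cref{function-to-torpair}; the shifted $\Tor$-pair $(\Fcal_{(n)},\Rcal\Fcal_n)^\top$ similarly corresponds to $\max\{0,\depth - n\}$.

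For~(i)~$\Leftrightarrow$~(iii): since $\grade(\pp)=\inf\{\depth(\qq)\mid \qq \in V(\pp)\}$ is always the largest order-preserving function bounded by $\depth$, we have $\depth$ order-preserving if and only if $\depth=\grade$, which by~\cite[Lemma 3.1]{CFF02} is equivalent to~(i). On the other hand, \cref{T:definable} identifies cotilting $\Tor$-pairs precisely with order-preserving functions $\leq \depth$, making~(iii) equivalent to $\depth$ being order-preserving. For~(ii)~$\Leftrightarrow$~(iii), in the noetherian setting cotilting equals cofinite type, so~(iii) gives $\Rcal\Fcal_0=\Pcal^{<\aleph_0}(R)^\top$, that is, $\Rfd(M)=0 \Leftrightarrow \rfd(M)=0$; applying this to yokes $\Upsilon_n(M)$ and using the description of the shifted $\Tor$-pairs from the Setup propagates the equality to all levels, yielding~(ii). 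The converse is direct as $\Pcal^{<\aleph_0}(R)^\top$ is manifestly definable.

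For~(iii)~$\Leftrightarrow$~(v)~$\Leftrightarrow$~(vi) and the deduction of~(iv): under~(iii) each shifted $\Tor$-pair is cotilting of cofinite type, generated by Koszul cosyzygies in $\Pcal_n^{<\aleph_0}(R)$; the higher-dimensional Govorov--Lazard theorem of~\cite{HLG24} then identifies $\Fcal_n$ with $\varinjlim \Pcal_n^{<\aleph_0}(R)$, giving~(v). Conversely,~(v) makes each $\Fcal_n^\top$ definable, so $\Rcal\Fcal_0=\bigcap_n \Fcal_n^\top$ is definable, recovering~(iii). The equivalence~(v)~$\Leftrightarrow$~(vi) follows by applying~(iii)~$\Leftrightarrow$~(v) to each localization $R_\pp$, since being almost Cohen--Macaulay is a local property. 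Finally, (v) implies the inclusion in~(iv) by unioning over $n$; the equality clause reflects the Raynaud--Gruson identity $\Findim R=\Kdim R$, as in the infinite-dimensional case a direct sum $\bigoplus_n M_n$ with $M_n \in \Pcal_n^{<\aleph_0}(R)$ and $\pd M_n=n$ lies in $\varinjlim \Pcal^{<\aleph_0}(R)$ but has infinite flat dimension.

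The main obstacle is the implication~(iii)~$\Rightarrow$~(v), which genuinely requires the higher-dimensional Govorov--Lazard theorem~\cite{HLG24}: without it, one only obtains the weaker assertion $\Pcal_n^{<\aleph_0}(R)^\top=\Fcal_n^\top$ at the level of $\Tor$-orthogonals, not that $\Fcal_n$ itself is the direct-limit closure of $\Pcal_n^{<\aleph_0}(R)$.
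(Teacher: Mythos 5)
Your proof is correct and follows essentially the same route as the paper's: both reduce everything to the observation that $(\Fcal_{(0)},\Rcal\Fcal_0)^\top$ corresponds to the $\depth$ function under \cref{T:Torpair-characterisation}, use the identity $\grade(\pp)=\inf\{\depth(\qq)\mid\qq\in V(\pp)\}$ together with \cref{T:definable} to equate the almost Cohen--Macaulay condition with order-preservation of $\depth$, invoke \cite[Theorem 3.2]{CFF02} to pass between finitely generated and arbitrary modules, and apply \cite[Theorem B]{HLG24} to upgrade equality of $\Tor$-orthogonals to equality with the direct-limit closure $\varinjlim\Pcal_n^{<\aleph_0}$. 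The one genuine difference is cosmetic: you organize around the equivalences $(i)\Leftrightarrow(iii)$, $(ii)\Leftrightarrow(iii)$, $(iii)\Leftrightarrow(v)\Leftrightarrow(vi)$ rather than the paper's chain $(i)\Rightarrow(ii)\Rightarrow(iii)\Rightarrow(vi)$ and $(v)\Rightarrow(iv)\Rightarrow(vi)$, and you also supply an explicit argument (direct sum of Koszul cosyzygies of unbounded projective dimension) for the ``equality if and only if finite dimension'' clause in $(iv)$, which the paper's proof does not address directly. Both of these are reasonable additions, but the tools and ideas are the same.
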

\begin{proof}
    $(i)\Leftrightarrow (v) \Leftrightarrow (vi)$ is by \cite[Theorem B]{HLG24} and as being almost Cohen--Macaulay is a local condition by definition.
    
    $(i)\Rightarrow (ii)$: This holds by \cite[Corollary 3.3]{CFF02}. Alternatively, the equivalence $(i)\Leftrightarrow (ii)$ follows from \cref{T:Torpair-characterisation}, \cref{T:definable}, and the above mentioned fact that $(i)$ amounts to $\grade = \depth$. Indeed, $\rfd_R = \Rfd_R$ is equivalent to $\Rcal \Fcal_n$ being definable by \cref{T:definable}, which by the same result amounts to $\depth$ being order-preserving, and thus equal to $\grade$.

    $(ii)\Rightarrow (iii)$: By \cite[Theorem 3.2]{CFF02} we can suppose that $\rfd_R(M) = \Rfd_R(M)$ for every $R$-module $M$. In particular, $\Rcal \Fcal _0$ is closed under products as it is generated as a $\Tor$-orthogonal to a set of finitely presented modules, and such classes are closed under products.

    $(iii)\Rightarrow (vi)$
    It follows that for every prime ideal $\pp$, $\Rcal\Fcal_0(R_\pp) = \Rcal\Fcal_0(R)\cap \Mod R_\pp$ as $\Tor$ commutes with localisations, so $\Rcal\Fcal_0(R_\pp)$  is a definable class for each prime $\pp \in \Spec R$. By \cref{def-tor-pairs}, we conclude that $\Fcal_n(R_\pp) = \varinjlim \Pcal_n^{<\aleph_0}(R_\pp)$ for every prime $\pp$ where $n\coleq \dim R_\pp$. To see that the statement holds for every $n$, apply \cite[Theorem B]{HLG24}.  

    $(v) \Rightarrow (iv)$: This is clear.
    
    $(iv) \Rightarrow (vi)$: Take $F \in \Fcal_n(R_\pp)\subseteq \Fcal_n(R)$ where $n \coleq \dim R_\pp$. By assumption, $F = \varinjlim M_i$, where each $M_i$ is a finitely presented $R$-module of finite projective dimension. As $F$ is an $R_\pp$-module by assumption, and direct limits commute with localisation, $F \cong F\otimes_RR_\pp = \varinjlim (M_i\otimes_RR_\pp)$, where the right-hand side is a direct limit in $\Mod R_\pp$ of finitely presented $R_\pp$-modules of projective dimension at most $n$. That the statement holds for all $n$ then follows by \cite[Theorem B and Subsection 2.13]{HLG24}. 
\end{proof}
For the weaker condition that $\Fcal_n(R) = \varinjlim \Pcal^{<\aleph_0}_n(R)$ for some $n>0$, see \cite[Theorem B]{HLG24}.

\begin{rmk} 
The modules with vanishing restricted flat dimension were also introduced in \cite{Xu96} with the nomenclature \newterm{strongly torsion free} modules over an arbitrary ring. Additionally, we recover Xu's characterisation of the strongly torsion free modules over Gorenstein rings in our \cref{T:Torpair-characterisation}, see \cite[Theorem 5.4.8]{Xu96}.
\end{rmk} 

\bibliographystyle{amsalpha}
\bibliography{bibitems}
\end{document}